\def\R{\mathop{\mathbb{R}}\nolimits}
\def\ksi {\mathop{\xi}\nolimits}
\def\* {\mathop{\otimes}\nolimits}
\def\+{\mathop{\oplus}\nolimits}
\def \Z{\mathop{\mathbb{Z}}\nolimits}
\def\lt{\left}
\def\rt{\right}
\def\[{[\![}
\def\]{]\!]}
\DeclareMathOperator{\Supp}{Supp}
\newtheorem{theorem}{Theorem}[section]
\newtheorem{corollary}[theorem]{Corollary}
\newtheorem{lemma}[theorem]{Lemma}
\newtheorem{conjecture}[theorem]{Conjecture}
\newtheorem{proposition}[theorem]{Proposition}
\newtheorem*{proposition*}{Proposition}
\newtheorem{definition}{Definition}[section]
\newtheorem*{theorem*}{Theorem}
\newtheorem*{lemma*}{Lemma}
\newtheorem*{corollary*}{Corollary}
\theoremstyle{remark}
\newtheorem{remark}{Remark}
\title{From the Newton equation to the wave equation~: the case of shock waves}
\author{Xavier Blanc\footnote{Univ. Paris Diderot, Sorbonne Paris Cit\'e, Laboratoire Jacques-Louis Lions, UMR 7598, UPMC, CNRS, F-75205 Paris, France }~ \& Marc Josien\footnote{CERMICS, Ecole Nationale des Ponts et Chauss\'ees, 6-8 avenue Blaise Pascal, Cit\'e Descartes, 77455 Marne-la-Vall\'ee}~~\footnote{INRIA, Paris-Rocquencourt}.}
\begin{document}

\maketitle

\begin{abstract}
    We study the macroscopic limit of a chain of atoms governed by the Newton equation. It is known from the work of Blanc, Le Bris, Lions, that this limit is the solution of a nonlinear wave equation, as long as this solution remains smooth. We show, numerically and mathematically that, if the distances between particles remain bounded, it is not the case any more when there are shocks -at least for a convex nearest-neighbour interaction potential with convex derivative.
\end{abstract}

\section{Introduction}
  \paragraph{Motivation} We investigate here the macroscopic limit of the time-dependent Newton equation ruling the evolution of a set of particles at the microscopic scale. 
  We perform our study in a simplified context: the particles form a one-dimension chain and we suppose that the interactions between the particles are nearest-neighbour interactions.
  It has been proven in \cite{BLL} that, when the potential is convex, this system tends to a wave equation, provided that the solution of this wave equation is regular. However, non-linear wave equations are known to develop shocks in finite time. 
  Our aim is to examine how this phenomenon impacts the convergence of Newton equations to wave equation.\\
  Consider $2N$ particles, indexed by $j \in \[-N,N-1\]$ and with positions $X_j$ which interact through the Newton equation, for $j \in \[-N+1,N-2\]$:
  \begin{equation}\label{Newton}
    \frac{d^2}{dt}X_j(t) = W'\lt(X_{j+1}(t)-X_j(t)\rt) - W'\lt(X_j(t)-X_{j-1}(t)\rt), 
  \end{equation}
  where $W$ is the interaction potential.
  Throughout the article, we assume that $W$ is even.
  The initial and boundary conditions are:
  \begin{align}\label{IDD}
    &X_{j+1}-X_j(0)=\phi_0^x\lt(\frac{j}{N}\rt) \text{ and } \frac{d}{dt}X_j(0)=\phi^\tau_0\lt(\frac{j}{N}\rt), \\
    &X_{-N}(t)=N\phi_l \text{ and } X_{N}(t)=N\phi_r. \label{BCD}
  \end{align}
  We introduce the following rescaling:
  \begin{align*}
    t=N\tau, &&
    j=Nx.
  \end{align*}
  The time $t$ is the microscopic time while $\tau$ is the macroscopic time. Then, the semi-discrete equation \eqref{Newton} is consistent with the wave equation:
  \begin{equation}\label{WE}
    \partial_\tau^2 \phi(\tau,x)=\partial_x \lt[W'(\partial_x \phi(\tau,x))\rt],
  \end{equation}
  with initial and boundary conditions:
  \begin{align}\label{IDC}
    &\partial_x \phi(\tau=0,x)=\phi^x_0(x) \text{ and } \partial_\tau \phi(\tau=0,x)=\phi^\tau_0(x),\\
    &\phi(\tau,-1)=\phi_l \text{ and } \phi(\tau,1)=\phi_r. \label{BCC}
  \end{align}
  
  \begin{remark}
    It is worth pointing out that the natural variables in the hyperbolic system~\eqref{WE} are $\partial_\tau \phi$ and $\partial_x \phi$, in the sense that:
    \begin{equation*}
      \partial_\tau 
      \lt(
	\begin{array}{c}
	  \partial_\tau \phi\\
	  \partial_x \phi
	\end{array}
      \rt)
      =
      \partial_x
      \lt(
	\begin{array}{c}
	  W'\lt(\partial_x \phi\rt)\\
	  \partial_\tau \phi
	\end{array}
      \rt),
    \end{equation*}
    which is a $p$-system (see \cite{Serre}, p 127-131). We therefore introduce their discrete analogues:
    \begin{align*}
      &U_j=X_{j+1}-X_j && \text{ and } &&V_j=\frac{d X_j}{dt}.
    \end{align*}
  \end{remark}

    \begin{remark}[About inversion]\label{RqInversion}
      One could a priori think that \eqref{Newton} may lead to some inversions of atom positions, especially when shocks occur (see \cite{brenier_rearrange}). Put differently, one could have $X_{j+1}(t)<X_j(t)$ for certain $t$ and $j$, even if $X_j(t=0)$ was increasing. This would question the physical relevance of \eqref{Newton}, for the $j$-th particle is supposed to interact with its nearest neighbours (which are the $j-1$-th and the $j+1$-th particles if and only if $X_j$ is monotone). However, numerical simulations show that, for many interesting initial conditions (including many of those that lead to shocks), such inversions never occur. We therefore assume throughout the article that condition $X_j(t)<X_{j+1}(t)$ holds for all $t, j$.
    \end{remark}

  In the regular case and if $W$ is convex, it has been proven in \cite{BLL} that \eqref{Newton} converges to \eqref{WE} in the following sense:
  
  \begin{theorem}\label{ThBLL}
    Assume that $W \in \mathcal{C}^4(\R)$, and that $W'' \geq \alpha >0$. Suppose $\phi_l=-1$ and $\phi_r=1$. Assume that $\phi \in \mathcal{C}_\tau\lt(\mathcal{C}^4_x\rt)$ is a solution to \eqref{WE} for the initial and boundary conditions \eqref{IDC} and \eqref{BCC}. Let $X_j(t)$ be the unique solution to \eqref{Newton} for the initial and boundary conditions \eqref{IDD} and \eqref{BCD}. Then we have the following convergences:
    \begin{align}\label{BLL1}
      &\forall \tau \in [0,T[, & \sup_{-N \leq i \leq N-1} 
      \lt| \frac{1}{N} X_j(N\tau) - \phi\lt(\tau, \frac{i}{N}\rt) \rt| \underset{N \rightarrow \infty}{\rightarrow} 0, \\
      \label{BLL2}
      & \forall \tau \in [0,T[, &  \sup_{-N\leq i \leq N-1} 
      \lt| \frac{dX_j}{dt}(N\tau) - \partial_\tau \phi \lt(\tau, \frac{i}{N}\rt) \rt| 
      \underset{N \rightarrow \infty}{\rightarrow} 0.
    \end{align}
  \end{theorem}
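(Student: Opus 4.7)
I would proceed by a discrete energy estimate that exploits the convexity of $W$. Set $\tilde X_j(t):=N\phi(t/N,j/N)$ so that $\tilde X_{\pm N}\equiv X_{\pm N}$, and let $E_j:=X_j-\tilde X_j$. A Taylor expansion at scale $h=1/N$ combined with \eqref{WE} yields the pointwise residual
\begin{equation*}
R_j:=\ddot{\tilde X}_j - W'(\tilde X_{j+1}-\tilde X_j) + W'(\tilde X_j-\tilde X_{j-1}) = O(N^{-3}),
\end{equation*}
so $\|R\|_{\ell^2}=O(N^{-5/2})$; the compatibility of initial and boundary data furthermore gives $\dot E_j(0)=0$, $E_{\pm N}\equiv 0$, $\dot E_{\pm N}\equiv 0$, and $E_{j+1}(0)-E_j(0)=O(N^{-1})$ (quadrature error between the prescribed discrete differences and the sampled derivative $\phi_0^x$).

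The key idea is to use a nonlinear Lyapunov functional of Bregman type rather than a linearization:
\begin{equation*}
\mathcal{E}(t):=\frac{1}{2}\sum_j \dot E_j^2 + \sum_j\Big[W(X_{j+1}-X_j)-W(\tilde X_{j+1}-\tilde X_j)-W'(\tilde X_{j+1}-\tilde X_j)(E_{j+1}-E_j)\Big].
\end{equation*}
The hypothesis $W''\geq\alpha$ gives coercivity, $\mathcal{E}\geq\frac{1}{2}\sum\dot E_j^2+\frac{\alpha}{2}\sum(E_{j+1}-E_j)^2$. Setting $g_j:=W'(X_{j+1}-X_j)-W'(\tilde X_{j+1}-\tilde X_j)$, the error equation $\ddot E_j=g_j-g_{j-1}-R_j$ and Abel summation (using $\dot E_{\pm N}=0$) give $\sum_j\dot E_j\ddot E_j=-\sum_j g_j(\dot E_{j+1}-\dot E_j)-\sum_j R_j\dot E_j$. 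Differentiating the Bregman term, using the Taylor identity $g_j=W''(\tilde X_{j+1}-\tilde X_j)(E_{j+1}-E_j)+O((E_{j+1}-E_j)^2)$ together with $|\dot{\tilde X}_{j+1}-\dot{\tilde X}_j|=O(N^{-1})$, produces the complementary sum $+\sum_j g_j(\dot E_{j+1}-\dot E_j)+O(N^{-1})\sum_j(E_{j+1}-E_j)^2$. The two cross-terms cancel, leaving
\begin{equation*}
\frac{d\mathcal{E}}{dt}\leq \|R\|_{\ell^2}\sqrt{2\mathcal{E}} + \frac{C}{N}\mathcal{E}.
\end{equation*}

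Gronwall applied to $\sqrt{\mathcal{E}}$ over the microscopic interval $t\in[0,NT]$ yields $\sqrt{\mathcal{E}(NT)}\leq e^{CT/2}\bigl(\sqrt{\mathcal{E}(0)}+NT\|R\|_{\ell^2}/\sqrt{2}\bigr)=O(N^{-1/2})$. Then $\sum_j\dot E_j^2\leq 2\mathcal{E}=O(N^{-1})$ gives $\sup_j|\dot E_j|\leq\sqrt{2\mathcal{E}}=O(N^{-1/2})\to 0$, which is \eqref{BLL2} since $\dot{\tilde X}_j=\partial_\tau\phi(t/N,j/N)$; and the discrete Poincar\'e inequality $\sup_j|E_j|^2\leq 2N\sum_j(E_{j+1}-E_j)^2\leq(4N/\alpha)\mathcal{E}=O(1)$ yields $\sup_j|E_j/N|=O(N^{-1})\to 0$, which is \eqref{BLL1}.

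The main obstacle is precisely the cancellation of the $g_j(\dot E_{j+1}-\dot E_j)$ terms: a naive linearization of $W'$ around $\tilde X_{j+1}-\tilde X_j$ would lead to time-dependent coefficients $a_{j+1/2}$ whose derivatives involve the unknown discrete velocity gradient, producing an energy inequality with a $\mathcal{E}^{3/2}$ Riccati term; integrated by Gronwall over an interval of microscopic length $NT$, such a term blows up. The Bregman-distance formulation keeps the potential energy fully nonlinear and thus differentiates it exactly, replacing the bad nonlinearity by the harmless $C\mathcal{E}/N$ contribution, which comes purely from the slow $\tau$-dependence of $\tilde X$. The convexity hypothesis $W''\geq\alpha$ plays a double role: coercivity of $\mathcal{E}$ and control of the quadratic Taylor remainder in $g_j$.
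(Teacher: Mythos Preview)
This theorem is not proved in the present paper: it is quoted from \cite{BLL} as background, and the article only remarks that the proof there extends to the initial condition \eqref{IDD}. There is therefore no in-paper argument to compare against.

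On its own merits your argument is essentially correct and is the natural discrete relative-entropy (Bregman-distance) stability proof. The consistency bound $R_j=O(N^{-3})$, the coercivity of the Bregman functional from $W''\geq\alpha$, the Abel-summation cancellation of the $g_j(\dot E_{j+1}-\dot E_j)$ cross-term, and the resulting inequality $\dot{\mathcal E}\leq \|R\|_{\ell^2}\sqrt{2\mathcal E}+CN^{-1}\mathcal E$ all go through as you describe; the crucial feature, which you identify, is that the only uncontrolled constant enters with a factor $1/N$, so Gronwall over $t\in[0,NT]$ produces a bounded exponential. Two points deserve explicit closure. First, the Taylor remainder $g_j-W''(\tilde U_j)(E_{j+1}-E_j)=O\bigl((E_{j+1}-E_j)^2\bigr)$ requires a bound on $W'''$ on an interval containing both $U_j$ and $\tilde U_j$; since only $W\in\mathcal C^4(\R)$ is assumed, with no global bound on $W'''$, you need the standard continuation (bootstrap) argument: as long as $\mathcal E$ stays below a fixed threshold, $|E_{j+1}-E_j|\leq\sqrt{2\mathcal E/\alpha}$ keeps $U_j$ in a fixed compact set, the constant is uniform there, and the Gronwall bound then shows $\mathcal E$ never reaches the threshold for $N$ large. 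Second, the upper bound $\mathcal E(0)=O(N^{-1})$ uses that the Bregman integrand is dominated by $\tfrac12\sup W''\,(E_{j+1}-E_j)^2$ on that same compact set. With these routine closures the proof is complete and yields the quantitative rates $\sup_j|\dot E_j|=O(N^{-1/2})$ and $\sup_j|E_j|/N=O(N^{-1})$ that you state.
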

  
  \begin{remark}
    Theorem \ref{ThBLL} is not stated in \cite{BLL} for the initial condition \eqref{IDD}, but with:
    \begin{align*}
      X_j(t=0)= N\int_{-1}^{j/N} \phi_0^x(x) dx && \text{and} && \frac{d X_j}{dt}(t=0)=\phi_0^\tau\lt(\frac{j}{N}\rt).
    \end{align*}
    As easily seen, its proof however also applies to the initial condition \eqref{IDD}.
  \end{remark}

  When $W$ is convex but not quadratic, even if $\phi_0^x$ and $\phi_0^\tau$ are smooth, shocks generally occur in finite time for solutions of \eqref{WE}. By shock, we mean that the solution $\phi$ of \eqref{WE} becomes irregular (see \cite{Serre} for examples). An interesting question is what happens after such shocks for the discrete system \eqref{Newton}, and in particular if there is still a link between \eqref{Newton} and~\eqref{WE}. To answer this question, we will consider Riemann-like initial conditions, as is customary in the study of hyperbolic systems.\\
  Let us underline that \eqref{Newton}, which can be seen as a semi-discrete numerical scheme, is taken for granted, as it comes from a physical model.
  Some authors take the opposite way, and modify given schemes (adding viscosity for example) in order to go from the discrete system to the continuous one (see \cite{Mielke}), or to help the numerical computation of hyperbolic systems (\cite{tadmornumerical}).\\
  Let us also mention that their exists a quite detailed study on discrete systems ruled by \eqref{Newton} in the particular case where:
  \begin{equation}\label{TodaPot}
    W(u)=\exp(-u).
  \end{equation}
  In that case, called the Toda lattice (see  \cite{McLaughlin}, \cite{Flaschka}, \cite{Toda}, \cite{deiftvernakides}), the discrete Hamiltonian system is completely integrable: this allows for a detailed description of the solutions. It is well-known that \eqref{WE} does not describe well the limiting system and that the solutions are dispersive waves. This is linked with Lax pairs, and helps to make the connection with the Korteweg-de Vries equation (see \cite{LaxLevermore1}). 
  We will not investigate in this article this particular case, which is, in our understanding, closely linked with the special structure induced by the potential \eqref{TodaPot}. We shall however demonstrate that the solutions associated with more general potentials globally  display the same features as the dispersive waves of the Toda lattice (see Section \ref{SecNLin}).

  \paragraph{Numerics} In order to have a better understanding of \eqref{Newton}, we perform some numerical experiments. To do so, we use a Verlet scheme (see \cite{lebris}, p 111) on the variables $U_j$ and $Z_j:=\frac{d U_j}{dt}$.
  More explicitly, we simulate:
  \begin{equation}\label{SchemaVerlet}
    \lt\{
      \begin{array}{l}
	      {U}_j^{n+1/2}={U}_j^n+\frac{\delta t}{2} {Z}_j^n,\\[6pt]
	      {Z}_{j}^{n+1/2}={Z}_j^n+\frac{\delta t}{2} \lt(W'\lt({U}_{j+1}^{n+1/2}\rt)-2W'\lt({U}_{j}^{n+1/2}\rt)+W'\lt({U}_{j-1}^{n+1/2}\rt) \rt),\\[6pt]
	      {U}_j^{n+1}={U}_j^n+\delta t{Z}^{n+1/2}_j,\\[6pt]
	      {Z}_j^{n+1}={Z}_j^n+\delta t\lt(W'\lt({U}_{j+1}^{n+1/2}\rt)-2W'\lt({U}_{j}^{n+1/2}\rt)+W'\lt({U}_{j-1}^{n+1/2}\rt) \rt),
      \end{array}
    \rt.
  \end{equation}
  where $X^n_j$ is an approximation for $X_j(n\delta t)$.
  We take an initial condition corresponding to a Riemann problem or a smooth initial condition that develops shocks in finite time (for the sake of simplicity, we only use Riemann problems for illustrations in this article). 
  The crucial feature of \eqref{SchemaVerlet} is that it preserves the Hamiltonian properties of \eqref{Newton} (for \eqref{SchemaVerlet} is symplectic).
  The error we make on $U_j$ in $L^2$ norm is of order $O(NT\delta t)$ (see \cite{Hairer} p13), where $T$ is the final macroscopic time of simulation, which allows to simulate \eqref{Newton} for a reasonably large number $2N$ of particles ($N \simeq 10^4$), and thus to have a fair experimental knowledge of the system \eqref{Newton}.
  
  \paragraph{Outline of the article}

    In Section \ref{SecNota}, we introduce the notations and collect some classical facts about \eqref{Newton} and \eqref{WE}. In particular, we focus on the initial and boundary conditions, that are supposed to mimic the Riemann problem. We also focus on the natural energy of these systems.\\
    In Section \ref{SecRes}, we state and next illustrate our main results. We focus first on the simple quadratic potential $W(u)=u^2/2$ and claim that the convergence of \eqref{Newton} to \eqref{WE} is true for a large class of initial conditions. This is proved in Section \ref{SecLin}. 
    Then we examine the case where both $W$ and $W'$ are strongly convex. We show that, if the distances between neighbouring particles remain bounded and if the energy of the continuous system \eqref{WE} is not preserved, solutions of \eqref{Newton} do not converge to solutions of \eqref{WE}. It is based on the fact that the system \eqref{Newton} displays the property of light cone: the perturbations propagate with a finite speed at macroscopic level. This is proved in Section \ref{SecNLin}.
    We state next a conjecture about a uniform bound on the distances between particles of the system \eqref{Newton}, that we justify with numerics and that we question through a study of the linear case. This conjecture is motivated by the fact that the assumption of boundedness of the distances between particle is a major assumption in every result of Section \ref{SecNLin}. We discuss it in Section \ref{SecConjec}.
    Finally, we state that discrete shock waves do not exist, either when $W(u)=\frac{u^2}{2}$ or when $W'$ and $W''$ are strictly convex. It is proved in Section \ref{SecSoliton}.

  \section{Preliminaries}\label{SecNota}
 
     \subsection{General notations}
      Let $q \in [1, \infty]$. For $Y_j$, with $j \in [\![-N,N-1]\!]$, we denote by:
      \begin{equation*}
	\lt\| \lt(Y_j\rt) \rt\|_{l_j^q}=
	\lt\{
	  \begin{array}{l l}
	    \lt(\sum_{i=-N}^{N-1} Y_j^q\rt)^{1/q}  & \text{if }q<\infty,\\
	    \max_{j \in [\![-N,N-1]\!]} \lt|Y_j\rt|& \text{if } q=\infty.
	  \end{array}
	\rt.
      \end{equation*}
      We denote by $\mathcal{C}_{p}$ the set of piecewise continuous functions on $[-1,1]$, and $\mathcal{C}^1_{p}$ the set of piecewise continuous functions on $[-1,1]$ that have piecewise continuous derivatives. We use the subscript ${per}$ for functional spaces to indicate that we intersect these spaces with the space of $2$-periodic functions. We use the subscripts $x$, $\tau$, $t$ for functional spaces to indicate that these spaces have their variables $x$ in $[-1,1]$, $\tau$  in $[0,T]$,  respectively $t$ in $[0,NT]$. For example:
      \begin{align*}
        H_\tau^1\lt( \mathcal{C}_{x}\rt):=H^1\lt([0,T],\mathcal{C}([-1,1])\rt).\\
      \end{align*}
    \subsection{Initial data and boundary conditions}
      In the present article, we mainly use Dirichlet boundary conditions. They have the advantage of being consistent with Riemann problems. In Section \ref{SecConjec}, we will also use periodic boundary conditions for technical reasons;  more specifically, when the potential $W$ is quadratic, it allows for an explicit resolution of \eqref{Newton}.\\
      We say that \eqref{Newton} (respectively \eqref{WE}) is set with Dirichlet boundary conditions if \eqref{IDD} and \eqref{BCD} (respectively \eqref{IDC} and \eqref{BCC}) are satisfied, with $\phi^x_0, \phi^\tau_0 \in \mathcal{C}_p$ and  $\phi_l, \phi_r  \in \R$ being compatible in the following sense:
      \begin{align}
	&\int_{-1}^1 \phi^x_0(x)dx=\phi_r-\phi_l,
	&\phi^\tau_0(-1)= 0, && \phi^\tau_0(1)=0. \label{DirSet1}
      \end{align}
      We say that the system \eqref{Newton} is set with periodic boundary conditions if \eqref{Newton} is satisfied for all $j$ with the convention that $X_{N+j}=X_{-N+j}$. The associated initial conditions are~\eqref{BCD} with $\phi_0^\tau, \phi^x_0 \in \mathcal{C}_{p}$ such that the compatibility condition:
        \begin{equation*}
          \int_{-1}^1 \phi^x_1(x) dt = 0
        \end{equation*}
      is satisfied.

    \subsection{Hypotheses on $W$}
    
      We suppose that $W$ is $\mathcal{C}^2$ and strongly convex:
      \begin{equation}\label{Convex}
        W''\geq \alpha >0.
      \end{equation}
      Indeed, this assumption implies that \eqref{WE} is a strictly hyperbolic system (if not, the theory for \eqref{WE} is far more complex).
      A very particular case is when $W$ is quadratic:
      \begin{equation}
	\label{PotQuad}
	W(u)=\frac{u^2}{2}.
      \end{equation}
      When we consider a non-quadratic potential, we also assume that $W$ is $\mathcal{C}^3$ and that $W'$ is strictly convex:
      \begin{align}
	\label{SConvex}
	&W''' >0.
      \end{align}
      Let us emphasize that \eqref{WE} is genuinely non-linear when $W'''>0$ (see \cite{Serre} p~113 and p~127). We speak about the \textit{linear case} (respectively the \textit{nonlinear case}) when \eqref{PotQuad} is satisfied (respectively when \eqref{Convex} and \eqref{SConvex} are satisfied). The terminology may seem ambiguous, but it is justified by \eqref{WE}, which involves $W'$ and not $W$.\\
      The convexity \eqref{Convex}, and \textit{a fortiori} \eqref{SConvex}, is obviously a strong and non-physical simplification, as a physical potential should be even (and non-constant even potentials with other minima than $0$ cannot satisfy \eqref{Convex} on $\R$). For example, our results do not directly cover this ``quadratic'' potential:
      \begin{equation}\label{PotDur}
        W(u)=\lt(|u| -1 \rt)^2.
      \end{equation}
      Our numerical experiments suggest that for given initial conditions, the distances between particles is bounded from below and from above (see Remark \ref{RqInversion} and Section~\ref{SecConjec}). Hence one can require \eqref{Convex} or \eqref{SConvex} to be true only on the corresponding intervals. For example, if we know a priori that the order of the particles is preserved, one can apply our results with the potential \eqref{PotDur}.

    \subsection{The discrete system}
      \paragraph{Notations}
      For the discrete system \eqref{Newton}, we denote:
      \begin{align*}
	V_j(t)=\frac{dX_j}{dt}(t), &&
	U_j(t)=X_{j+1}(t)-X_j(t), &&
	Z_j(t)=\frac{dU_j}{dt}(t).
      \end{align*}
      \begin{remark}[Dependence on $N$]
	$X_j$ and the other discrete quantities implicitly depend on $N$. When necessary, we write $X_j^N$, $U^N_j$, \textit{et cetera}.
      \end{remark}
      The correspondence between the discrete system and the continuous system in encoded in the following notations:
      \begin{align*}
	&k^N(x):=\lfloor Nx \rfloor,\\
	&\theta^N(x):= Nx-k^N(x),\\
	&\phi^N(\tau,x):=\frac{1-\theta^N(x)}{N} X_{k^N(x)}(t)+\frac{\theta^N(x)}{N} X_{k^N(x)+1}(t),\\
	& \zeta^N(\tau,x):=\partial_\tau \phi^N(\tau,x)=\lt( \lt(1-\theta^N(x) \rt)\frac{d}{dt} X_{k^N(x) }+\theta^N(x)\frac{d}{dt}X_{k^N(x) +1} \rt)(t),\\
	&\ksi^N(\tau,x):=\frac{d}{dt} X_{k^N(x)}(t).
      \end{align*}
      Remark that $\partial_\tau \phi^N$ is the linear interpolation of $V_j$, and is therefore not equal to $\ksi^N(\tau,x)$, which corresponds to $V_j(t)$.
      In any case, we extend the functions $\phi^N, \ksi^N$ by continuity with constant branches on $]-\infty,-1] \cup [1,+\infty[$. For example, we have $\phi(x)=\phi_l$ if $x<-1$.
      
      \paragraph{Properties of the discrete system}
      
      The discrete system \eqref{Newton} is an Hamiltonian system, with the energy:
      \begin{equation}\label{ED}
	\mathcal{E}_D(t):=\frac{1}{2N} \sum_{j=-N}^{N-1} V_j^2(t) + \frac{1}{N} \sum_{j=-N}^{N-1} W(U_j(t)).
      \end{equation}
      The energy \eqref{ED} is the total mechanical energy of the system. The kinetic energy is the first term and the potential energy is the second term.
      Either in Dirichlet or in periodic setting, an elementary calculation shows that the discrete energy $\mathcal{E}_D$ is preserved:
      \begin{equation}\label{EnergDis}
        \frac{d}{dt}\mathcal{E}_D(t)=0.
      \end{equation} 
      As a consequence, the energy \eqref{ED} being convex, a direct application of the Cauchy-Lipschitz theorem implies that~\eqref{Newton} has a unique solution for every time $t \in [0,+\infty[$, provided $W$ satisfies \eqref{Convex}. For later purpose, we define the notion of discrete compatibility.
 
      \begin{definition}[$D$-compatibility]
        We say that $T>0$ is $D$-compatible with $\phi_0^x$ and $\phi_0^\tau$ if there exist $\delta>0$ and $C>0$ such that:
        \begin{align*}
	  &\lt|\partial_x \phi^N(\tau,x)-\phi_0^x(-1) \rt| \leq CN^{-1} &&  \forall (\tau,x) \in [0,T]\times [-1,-1+\delta],\\
	  & \lt|\partial_x \phi^N(\tau,x)-\phi_0^x(1) \rt| \leq CN^{-1} && \forall (\tau,x) \in [0,T] \times [1-\delta,1],
	\end{align*}	      
	and $\partial_\tau \phi^N$ converges uniformly to $0$ on  $[0,T] \times \lt\{ [-1,-1+\delta] \cup [1-\delta,1] \rt\}$, as $N$ goes to infinity.
      \end{definition}
      $D$-compatibility means that the solution of \eqref{Newton} is almost not perturbed near the boundary $x=-1$ and $x=1$, until time $T$.

    \subsection{The continuous system}
    
      Let $T>0$. Following \cite{Serre}, p 28, we say that $\phi \in W^{1,\infty}_{\tau,x}$ is a weak solution of \eqref{WE} with initial conditions \eqref{IDC} if, for all $g \in \mathcal{C}^{\infty}_c\lt( ]-\infty,T[ \times ]-1,1[\rt)$:
      \begin{align}
	\label{weakf}
	&\int_0^T \int_{-1}^1 \lt\{\partial_x g W'\lt(\partial_x \phi\rt)- \partial_\tau g \partial_\tau \phi\rt\}(\tau,x) dx d\tau = \int_{-1}^1  g(0,x)\phi^\tau_0(x) dx,\\
	&\int_0^T \int_{-1}^1 \lt\{\partial_x g\partial_\tau \phi -\partial_\tau g \partial_x \phi \rt\}(\tau,x) dx d\tau = \int_{-1}^1 g(0,x) \phi_0^x(x) dx.
	\label{weakf2}
      \end{align}
      We say that a weak solution $\phi$  of \eqref{WE} is an entropy solution if it also satisfies in the weak sense (see \cite{Serre}, p 82):
      \begin{equation}\label{EntropySol}
	\frac{d}{d\tau} \mathcal{E}_C(\tau) \leq 0,
      \end{equation}
      where $\mathcal{E}_C$ is the continuous energy associated with $\phi$:
      \begin{equation}
        \label{EnergCont}
        \mathcal{E}_C(\tau) =  \int_{-1}^1 \lt\{\frac{1}{2}\lt(\partial_\tau \phi\rt)^2 + W\lt(\partial_x \phi\rt) \rt\}(\tau,x) dx.
      \end{equation} 
      We are interested in weak entropy solutions $\phi$ of \eqref{WE} satisfying
      \begin{equation}
	\label{LoseEntropy}
	\mathcal{E}_C(T) < \mathcal{E}_C(0).
      \end{equation}
      Shocks satisfy \eqref{LoseEntropy}. We recall now the definition of the Riemann problems:
      \begin{definition}[Riemann problem]
	Let $u_l, u_r, v_l, v_r \in \R$, and:
	\begin{align}\label{Rpb}
	  \phi_0^x(x) := 
	  \lt\{
	    \begin{array}{l l}
	      u_l & \text{if } x<0,\\
	      u_r & \text{if } x \geq 0,
	    \end{array}
	  \rt. &&
	  \phi^\tau_0(x) := 
	  \lt\{
	    \begin{array}{l l}
	      v_l & \text{if } x<0,\\
	      v_r & \text{if } x \geq 0.
	    \end{array}
	  \rt.
	\end{align}
	Solving the Riemann problem associated with $(u_l, u_r, v_l, v_r)$ consists in finding  $\phi$ an entropy solution of \eqref{WE} on $[0,T] \times\R$ with initial conditions \eqref{IDC}.
      \end{definition}
      This is the classical Riemann problem. However, it is possible to use weaker assumptions on $\phi^x_0$ and $\phi^\tau_0$, that simulate what we call a \emph{boundary} Riemann problem. This second definition is more flexible and allows to work with a very large class of initial conditions (for example, smooth initial data that develop discontinuities in finite times, in system \eqref{WE}). Namely:
      \begin{definition} [Boundary Riemann problem] Let $u_l, u_r \in \R^2$, and:
	\begin{align}
	  \label{Riemannbord}
	  \phi_0^x(x) := 
	  \lt\{
	    \begin{array}{l l}
	      u_l & \text{if } x <-1/2,\\
	      u_r & \text{if } x >1/2,
	    \end{array}
	  \rt. &&
	  \phi^\tau_0(x) := 
	  \lt\{
	    \begin{array}{l l}
	      0 & \text{if } x <-1/2,\\
	      0 & \text{if } x >1/2,
	    \end{array}
	  \rt.
	\end{align}
	without further requirement on $\phi^x_0$ and $\phi^\tau_0$ between $-1/2$ and $1/2$. Solving this boundary Riemann problem consists in finding $\phi(\tau,x)$, entropy solution of \eqref{WE} with initial and boundary conditions \eqref{IDC} and \eqref{BCC}, and $X_j(t)$, solution of \eqref{Newton} with initial and  boundary conditions  \eqref{IDD} and \eqref{BCD}, with $\phi_l$ and $\phi_r$ being constant so that \eqref{DirSet1} is satisfied.
      \end{definition}
      We impose $\phi^\tau_0$ to vanish near the boundary in the boundary Riemann problem \eqref{Riemannbord} so that $\phi_l$ and $\phi_r$ are constant; this is useful to avoid some technicalities about boundary conditions.\\
      The solutions of the Riemann problem \eqref{Rpb} are combinations of rarefaction waves and shock waves. One does not change the solution of \eqref{WE} (for $T$ sufficiently small) if one restricts $\phi$ to $x \in [-1,1]$ and solves \eqref{WE} with Dirichlet boundary conditions \eqref{BCC}.\\
      For example (see \cite{Serre}, p~127-131), if $u_r>u_l$ and if the following Rankine-Hugoniot condition is satisfied:
      \begin{align}\label{RH}
	v_r-v_l = \sigma \lt(u_r-u_l\rt) && \text{and} && W'(u_r)-W'(u_l)=\sigma(v_r-v_l), 
      \end{align}
      then the entropy solution of the Riemann problem reads as:
      \begin{align*}
	&\partial_x\phi(\tau,x)=\lt\{ 
	  \begin{array}{l l} 
	    u_l & \text{ if } x<-\sigma \tau \\
	    u_r & \text{ if }  -\sigma \tau \leq x
	  \end{array} 
	  \right., &&
	  &\partial_\tau \phi(0,x)=\lt\{
	  \begin{array}{l l} 
	    v_l & \text{ if } x<-\sigma \tau \\
	    v_r & \text{ if } -\sigma \tau\leq x
	  \end{array},
	\rt.
      \end{align*}
      and satisfies \eqref{LoseEntropy}.
      We are interested in boundary Riemann problems. As a consequence, we focus on weak solutions $\phi \in W^{1,\infty}_{\tau,x}$ of \eqref{WE} in the Dirichlet setting that can be continued by a constant on the right and on the left:
      
      \begin{definition}[$C$-compatibility]
        Let $\phi^x_0$ and $\phi^\tau_0 \in \mathcal{C}_p$, $\phi_l$ and $\phi_r \in \R$ satisfying \eqref{DirSet1}. Assume that $\phi$ is an entropy solution of \eqref{WE} on $[0,T]\times[-1,1]$ with initial and boundary conditions \eqref{IDC} and \eqref{BCC}. We say that $T$ is $C$-compatible with $\phi^x_0$ and $\phi^\tau_0$ if there exists $\delta>0$ such that:
        \begin{equation}\label{R-Compatible}
	  \phi([0,T]\times[-1,-1+\delta])=\lt\{\phi_l\rt\} \text{ and } \phi([0,T]\times[1-\delta,1])=\lt\{\phi_r\rt\}.
	\end{equation}        
      \end{definition}
      If $T$ is $D$-compatible and $C$-compatible with $\phi^x_0$ and $\phi^\tau_0$, we say that it is $DC$-compatible. Basically, $DC$-compatibility provides a strong control on the solutions of \eqref{Newton} and \eqref{WE} near the boundary $x=-1$ and $x=1$, until time $T$.
      
      For the linear system, we have the following theorem of existence and uniqueness (see Theorem 3 p 384 and Theorem 4 p 385 of \cite{Evans}):
      \begin{theorem}[Existence and uniqueness in the linear case]\label{ExistUneLin}
	Let $\phi_0^x, \phi^\tau_x \in L^2_x$.
	Suppose that $a, b \in \mathcal{C}^1_x$, and $a\geq a_0>0$.
	Then, there exists one and only one solution $\phi \in H^1_{\tau,x}$ to:
	\begin{equation*}
	  \partial_\tau^2 \phi(\tau,x) = \partial_x \lt(a(x) \partial_x \phi(\tau,x) + b(x) \rt),
	\end{equation*}
	with initial and boundary conditions \eqref{IDC} and \eqref{BCC}. In addition, the energy $\tilde{\mathcal{E}}_C$ of the system is preserved:
	\begin{equation*}
	  \tilde{\mathcal{E}}_C= \int_{-1}^1 \lt\{\frac{1}{2}\lt(\partial_\tau \phi(\tau,x)\rt)^2 + \frac{a(x)}{2} \lt(\partial_x \phi(\tau,x)\rt)^2+b(x) \partial_x \phi(\tau,x) \rt\}dx.
	\end{equation*}
      \end{theorem}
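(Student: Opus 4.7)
The plan is to follow the standard Galerkin argument for second-order hyperbolic equations, as in Evans \cite{Evans}. First I would reduce to the case of homogeneous boundary data and no $b$ term: set $\tilde\phi(\tau,x):=\phi(\tau,x)-\ell(x)$ where $\ell$ is the affine function with $\ell(-1)=\phi_l$, $\ell(1)=\phi_r$, and absorb the $b(x)$ contribution by noting that $\partial_x b$ is a fixed element of $L^2_x$ which can be treated as a time-independent source. One then looks for $\tilde\phi\in H^1_\tau(L^2_x)\cap L^\infty_\tau(H^1_{0,x})$ satisfying $\partial_\tau^2\tilde\phi=\partial_x(a\,\partial_x\tilde\phi)+f$ with $f$ fixed in $L^2_x$.

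Next I would set up a Galerkin scheme with eigenfunctions $(w_k)_{k\ge 1}$ of the Sturm--Liouville operator $L u := -\partial_x(a(x)\partial_x u)$ on $H^1_0([-1,1])$, which is self-adjoint and coercive since $a\geq a_0>0$. Writing $\phi_m(\tau,x)=\sum_{k=1}^m d_k^m(\tau) w_k(x)$ and projecting the PDE onto $\mathrm{span}(w_1,\dots,w_m)$ gives a linear second-order ODE system with smooth coefficients, which admits a unique global solution for the projected initial data. Multiplying the $m$-th equation by $\dot d_k^m(\tau)$ and summing yields the discrete energy identity
\begin{equation*}
\tfrac12\|\partial_\tau\phi_m(\tau,\cdot)\|_{L^2_x}^2+\tfrac12\langle a\,\partial_x\phi_m,\partial_x\phi_m\rangle -\langle f,\phi_m\rangle = \text{const}(m),
\end{equation*}
which, combined with the coercivity of $a$, gives uniform bounds for $\phi_m$ in $L^\infty_\tau(H^1_{0,x})$ and $\partial_\tau\phi_m$ in $L^\infty_\tau(L^2_x)$. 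I would then extract a weak-$\ast$ convergent subsequence and pass to the limit in the weak formulation \eqref{weakf}--\eqref{weakf2} to obtain existence of $\tilde\phi$, and hence of $\phi$.

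For uniqueness, linearity reduces the problem to showing that any solution with zero initial data and zero source vanishes. This is handled by Ladyzhenskaya's trick: for fixed $s\in[0,T]$, test against $\psi(\tau,x):=\int_\tau^s \phi(r,x)\,dr$ for $\tau\leq s$ and $0$ otherwise; integration by parts, using that $\psi(s,\cdot)=0$ and $\partial_\tau\psi=-\phi$, yields an identity forcing $\phi\equiv 0$. Finally, for the energy conservation statement, one differentiates $\tilde{\mathcal{E}}_C$ formally and uses the equation to get $\frac{d}{d\tau}\tilde{\mathcal{E}}_C=0$; since the initial data is only $L^2$, this identity is first established for smooth approximations (say, by mollifying $\phi_0^x,\phi_0^\tau$) and then passed to the limit via the continuous dependence granted by the very same energy estimate.

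The main obstacle, and the only delicate point, is the passage to the limit in the energy identity at the regularity level $\phi_0^x,\phi_0^\tau\in L^2_x$: a priori, $\tilde{\mathcal{E}}_C$ is only known to be lower semi-continuous in the weak topologies, so one has to argue that equality (rather than merely $\leq$) holds by a density and continuity argument. Once this is in place, the remaining ingredients—Galerkin, energy estimates, and Ladyzhenskaya duality—are routine and mirror the proofs in \cite{Evans}.
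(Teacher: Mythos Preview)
Your proposal is correct and follows the standard Galerkin argument from \cite{Evans}. Note, however, that the paper does not actually prove this theorem: it simply cites Theorems~3 and~4 on pp.~384--385 of \cite{Evans}, which are precisely the existence and uniqueness results you are outlining. So your write-up supplies the details of the reference the paper invokes rather than offering an alternative route.
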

      It is clear that this energy extends the above definition \eqref{EnergCont}.
    
    \subsection{Discrete shock waves}
    
      \begin{definition}\label{DiscShock}
	We say that $X_j(t)$, $j \in \Z, t\in\R_+$, is a discrete shock wave of \eqref{Newton} associated with $(u_l,u_r) \in \R^2$, $u_l \neq u_r$, if $X_j(t)$ satisfies \eqref{Newton} and if there exist $\phi \in \mathcal{C}^2(\R)$ and $c \in \R$ such that:
	\begin{align*}
	  &X_j(t)=\phi(j-ct) && \forall t \in \R_+, \forall j \in \Z,\\
	  &\lim_{x \rightarrow - \infty} \phi'(x)=u_l,\\
	  & \lim_{x \rightarrow + \infty} \phi'(x)=u_r.
	\end{align*}
      \end{definition}
      The definition implies that:
      \begin{align}\label{Soliton}
	c^2 \phi''(x)=W'(\phi(x+1)-\phi(x))-W'(\phi(x)-\phi(x-1)).
      \end{align}

  \section{Results}\label{SecRes}
    We state here our main results and illustrate them with some numerical results.
    
    \subsection{The linear case}
      When $W(u)=u^2/2$, one observes that $\phi^N$ converges in $H^1_{\tau,x}$ to $\phi$. One can even see that for regular initial conditions, this convergence seems to hold in every $W^{1,p}_{\tau,x}$. This is illustrated by Figure \ref{figfig1}:
      \begin{figure}[H]
	\caption{Comparison between $\partial_x \phi^N$ (black curve) and $\partial_x \phi$ (red curve) for Riemann shock initial conditions. $W(u)=\frac{u^2}{2}$, $N=10000$, $\tau=0.3$. }\label{figfig1}
	\begin{center}
	  \includegraphics[scale=0.7]{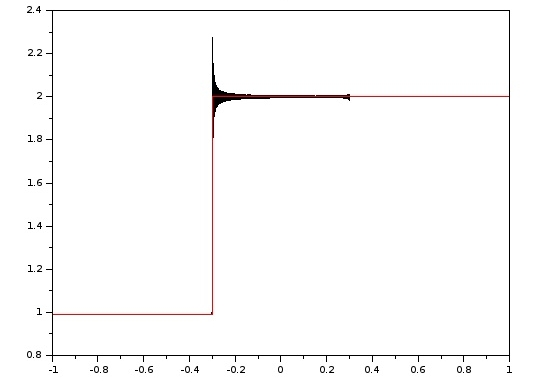}
        \end{center}
      \end{figure}
      We prove this convergence in a generalized framework, where the quadratic potential $W$ depends not only on $u$ but also on $x$:
      
      \begin{theorem}\label{ThmConvLin2}
	Let $T>0$, $\phi_l, \phi_r \in \R$, $\phi_0^x$ and $\phi^\tau_0 \in \mathcal{C}_p$ satisfy \eqref{DirSet1}. Assume that:
	\begin{equation*}
	  W(x,u)=\frac{1}{2}A(x)u^2+B(x)u.
	\end{equation*}
	with $A, B \in \mathcal{C}^1_x$ and $A$ satisfying:
	\begin{equation}\label{Ellipticitee}
	  A\geq \alpha>0,
	\end{equation}
	for $\alpha$ a given positive constant.
	Consider the solution $X^N_j(t)$ to:
	\begin{equation}
	  \label{Newton2}
	  \frac{d^2}{dt^2} X^N_j(t) = \partial_u W\lt(\frac{j}{N},U^N_j(t)\rt)-\partial_u W\lt(\frac{j-1}{N},U^N_{j-1}(t)\rt)
	\end{equation} 
	for all $j \in \[-N+1,N-2\]$ for the initial and boundary conditions \eqref{IDD} and \eqref{BCD}. Then the associated $\phi^N$ converges:
	\begin{equation*}
	  \phi^N \underset{N \rightarrow+\infty}{\rightarrow} \phi \text{ in } H^1_{\tau,x},
	\end{equation*} 
	where $\phi$ is the unique solution of:
	\begin{equation}\label{WE2}
	  \partial_\tau^2\phi(\tau,x)=\partial_x \lt( \partial_u W(x,\partial_x \phi(\tau,x)\rt),
	\end{equation}
	for the initial and boundary conditions \eqref{IDC} and \eqref{BCC}.
      \end{theorem}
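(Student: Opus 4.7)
The plan is a linear compactness argument organized around energy conservation. The system \eqref{Newton2} remains Hamiltonian with the $x$-dependent potential, so the energy
$$\mathcal{E}_D(t) := \frac{1}{2N}\sum_{j} (V_j^N)^2 + \frac{1}{N}\sum_{j} W\lt(\frac{j}{N}, U_j^N\rt)$$
is preserved in time, exactly as in \eqref{EnergDis}. Since $\phi_0^x, \phi_0^\tau \in \mathcal{C}_p \subset L^\infty$ and $A, B \in \mathcal{C}^1_x$, $\mathcal{E}_D(0)$ is bounded uniformly in $N$; using \eqref{Ellipticitee} to absorb the cross-term $B(j/N) U_j^N$ via Young's inequality, one obtains uniform $L^\infty_\tau L^2_x$ bounds on the piecewise-constant field $\partial_x \phi^N$ (equal to $U^N_{k^N(x)}$) and on the piecewise-linear field $\partial_\tau \phi^N$. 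Hence $\phi^N$ is bounded in $H^1_{\tau,x}$, and up to extraction $\phi^N \rightharpoonup \phi^*$ weakly.

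To identify $\phi^*$, I would test \eqref{Newton2} against $g(t/N, j/N)$ for arbitrary $g \in \mathcal{C}^\infty_c((-\infty,T)\times(-1,1))$, perform discrete summations by parts in both $j$ and $t$, rescale by $\tau = t/N$, and pass $N \to \infty$. The $j$-boundary terms vanish for large $N$ by the compact support of $g$, and the $t$-boundary terms at $\tau=0$ produce Riemann sums of $g(0,x)\phi_0^\tau(x)$ and of $g(0,x)\phi_0^x(x)$, which converge by the $\mathcal{C}_p$-regularity. The crucial simplification is that the flux $\partial_u W(x,u) = A(x)u + B(x)$ is \emph{linear} in $u$ with $N$-independent $\mathcal{C}^1_x$ coefficients: the product $A(j/N) U_j^N$ then converges weakly to $A(x)\partial_x \phi^*$ (weak-strong pairing), without any compensated-compactness machinery. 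The result is precisely the weak formulation \eqref{weakf}--\eqref{weakf2} for \eqref{WE2} with conditions \eqref{IDC}--\eqref{BCC}, and Theorem \ref{ExistUneLin} asserts uniqueness, so the whole sequence converges weakly to $\phi^* = \phi$.

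To upgrade to strong convergence I exploit the conservation of \emph{both} energies. Theorem \ref{ExistUneLin} provides $\tilde{\mathcal{E}}_C(\tau) = \tilde{\mathcal{E}}_C(0)$, while $\mathcal{E}_D(\tau) = \mathcal{E}_D(0) \to \tilde{\mathcal{E}}_C(0)$ by a direct Riemann-sum computation using the $\mathcal{C}_p$-regularity of the initial data. Combined with the weak-lower-semicontinuity bound $\tilde{\mathcal{E}}_C(\tau) \leq \liminf_N \mathcal{E}_D(\tau)$ and the passage to the limit of the linear term $\int B(x) \partial_x \phi^N \, dx$ (by weak convergence of $\partial_x \phi^N$), this forces
$$\int_{-1}^1 \lt\{(\partial_\tau \phi^N)^2 + A\,(\partial_x \phi^N)^2\rt\}(\tau,x)\,dx \longrightarrow \int_{-1}^1 \lt\{(\partial_\tau \phi)^2 + A\,(\partial_x \phi)^2\rt\}(\tau,x)\,dx$$
for every $\tau \in [0,T]$. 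Together with the already-established weak $L^2_x$ convergence and the ellipticity \eqref{Ellipticitee}, this yields strong $L^2_x$ convergence of $\partial_\tau \phi^N(\tau,\cdot)$ and of $\partial_x \phi^N(\tau,\cdot)$ for each $\tau$; dominated convergence in $\tau$, justified by the uniform $L^\infty_\tau L^2_x$ bound, then upgrades this to strong $H^1_{\tau,x}$ convergence.

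I expect the chief technical difficulty to lie in Step 2: the discrete-to-continuous translation must carefully transform the second difference $\partial_u W(j/N, U^N_j) - \partial_u W((j-1)/N, U^N_{j-1})$ into an $x$-derivative, properly distinguishing the piecewise-constant field $\partial_x \phi^N$ from the piecewise-linear field $\partial_\tau \phi^N$ (and from the lattice-valued $\ksi^N$), and treating the boundary layers of width $1/N$ near $x = \pm 1$ where the compactly supported test function $g$ and the Dirichlet conditions \eqref{BCD} interact. The remaining steps are standard once the bookkeeping of Step 2 is in place.
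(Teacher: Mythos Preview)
Your proposal follows essentially the same three-step strategy as the paper: energy conservation yields uniform $H^1$ bounds and weak compactness; discrete integration by parts (the paper uses the finite-difference operators $D_N^\pm$ and their adjointness) identifies the weak limit via the linearity of the flux together with the uniqueness in Theorem~\ref{ExistUneLin}; and matching of conserved energies upgrades weak to strong convergence.

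One technical point in your Step~3 deserves care. You invoke weak lower semicontinuity at each fixed $\tau$, but the weak convergence established in Step~1 is only in $H^1_{\tau,x}$, which does not directly give $\partial_x\phi^N(\tau,\cdot)\rightharpoonup\partial_x\phi(\tau,\cdot)$ in $L^2_x$ for individual $\tau$. The paper sidesteps this by working with the time-integrated norm
\[
\|\phi^N\|^2_{\tilde{\dot H}^1_{\tau,x}}=\int_0^T\!\!\int_{-1}^1\Big\{A(x)(\partial_x\phi^N)^2+(\partial_\tau\phi^N)^2\Big\}\,dx\,d\tau.
\]
Since both $\tilde{\mathcal E}_D^N(\tau)$ and $\tilde{\mathcal E}_C(\tau)$ are constant in $\tau$, one obtains $\|\phi^N\|_{\tilde{\dot H}^1_{\tau,x}}^2\to\|\phi\|_{\tilde{\dot H}^1_{\tau,x}}^2$ directly (after passing the linear $B$-term to the limit by weak convergence, and replacing $A(k^N(x)/N)$ by $A(x)$ via its $\mathcal C^1$ regularity). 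Norm convergence plus weak convergence in the Hilbert space $\tilde H^1_{\tau,x}$ then yields strong convergence without ever slicing in $\tau$, and your dominated-convergence step becomes unnecessary.
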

      
      It has a direct corollary:
      \begin{corollary}\label{ThmConvLin}
        Let $T>0$, $\phi_l, \phi_r \in \R$, $\phi_0^x$ and $\phi^\tau_0 \in \mathcal{C}_p$ satisfying \eqref{DirSet1}. Assume that $W$ satisfies \eqref{PotQuad}. Let $\phi$ be the solution of \eqref{WE} for the initial and boundary conditions \eqref{IDC} and~\eqref{BCC}, and $X^N_j$ be the solution of \eqref{Newton} for the initial and boundary conditions \eqref{IDD} and~\eqref{BCD}.
        We have the following convergence:
        \begin{equation*}
          \phi^N \underset{N \rightarrow+\infty}{\rightarrow} \phi \text{ in } H^1_{\tau,x}.
        \end{equation*}
      \end{corollary}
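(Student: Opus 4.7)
The plan is to derive Corollary \ref{ThmConvLin} as an immediate specialization of Theorem \ref{ThmConvLin2}. Take the generalized potential $W(x,u) = \frac{1}{2}A(x)u^2 + B(x)u$ with the constant coefficients $A(x) \equiv 1$ and $B(x) \equiv 0$. Clearly $A, B \in \mathcal{C}^1_x$ and the ellipticity condition \eqref{Ellipticitee} is satisfied with $\alpha = 1$, so all hypotheses of Theorem \ref{ThmConvLin2} hold.

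Next I would check that the two systems coincide with the ones in the corollary's statement. For the discrete system, $\partial_u W(x,u) = u$, so the right-hand side of \eqref{Newton2} reads
\begin{equation*}
\partial_u W\!\left(\tfrac{j}{N}, U^N_j\right) - \partial_u W\!\left(\tfrac{j-1}{N}, U^N_{j-1}\right) = U^N_j - U^N_{j-1} = W'(U^N_j) - W'(U^N_{j-1}),
\end{equation*}
which is exactly the right-hand side of \eqref{Newton} with the quadratic potential \eqref{PotQuad}. In the same way, the continuous equation \eqref{WE2} reduces to $\partial_\tau^2 \phi = \partial_x(\partial_x \phi) = \partial_x[W'(\partial_x \phi)]$, which is \eqref{WE}. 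Thus the solution $X^N_j$ of \eqref{Newton} and the solution $\phi$ of \eqref{WE} appearing in the corollary are exactly the objects appearing in Theorem \ref{ThmConvLin2} in this particular case.

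Applying Theorem \ref{ThmConvLin2} then gives the convergence $\phi^N \to \phi$ in $H^1_{\tau,x}$, which is the conclusion of the corollary. There is no genuine obstacle here: the only thing to verify is this mechanical identification of the two setups; no additional regularity or compatibility argument is needed, since the constant coefficients $A \equiv 1$, $B \equiv 0$ automatically meet the hypotheses of the general theorem.
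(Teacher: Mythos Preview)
Your proposal is correct and is exactly the approach the paper takes: the paper states Corollary~\ref{ThmConvLin} immediately after Theorem~\ref{ThmConvLin2} as a ``direct corollary'' without giving a separate proof, relying on precisely the specialization $A\equiv 1$, $B\equiv 0$ that you carry out.
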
 
      
      \begin{remark}[Less restrictive assumptions]
        For both Theorem \ref{ThmConvLin2} and Corollary \ref{ThmConvLin}, it is sufficient to assume that $\phi_0^x$ and $\phi_0^\tau \in L^2_x$ as long as $X_j^N$ satisfies the initial condition:
        \begin{align*}
	  &X_{j+1}(0)-X_j(0)=\partial_x \phi^N\lt(\tau=0,\frac{j}{N}\rt) \text{ and } \frac{d}{dt}X_j(0)=\partial_\tau \phi^N\lt(\tau=0,\frac{j}{N}\rt),
	\end{align*}
        such that:
        \begin{align*}
          &\partial_x \phi^N(\tau=0,.) \rightarrow \phi^x_0 \text{ in } L^2_x, &&   
          \ksi^N(\tau=0,.) \rightarrow \phi^\tau_0 \text{ in } L^2_x.
        \end{align*}
      \end{remark}

    \subsection{The non-linear case}

      If the potential $W$ is convex but not quadratic, when there is a shock, we observe on numerical simulations that $\partial_x \phi^N$ does not converge strongly to the associated $\partial_x \phi$. It does not even converge weakly. Actually $\partial_x \phi^N$ oscillates with a high frequency and an amplitude that does not decrease when $N$ grows. We believe that this situation is generic for basically any potential such that $W'$ is not affine on the zone where $U_j$ evolves. We can to prove this non-convergence, under the extra-hypothesis that $W'$ is strictly convex, and under the assumption that the distance between particles $U_j^N$ is bounded uniformly in $N$, $j$ and $t \in [0,NT]$ (the latter assumption is discussed in Section \ref{SecConjec}).\\
      We illustrate this non-convergence with Figure \ref{figfig3} comparing $\partial_x \phi^N$ and $\partial_x \phi$. Remarkably enough, even if there are large oscillations, let us remark that distances between particles remain bounded. We check numerically that $\partial_x \phi^N$ coincides with $\partial_x \phi$ outside a region of space away from the shock that grows linearly in macroscopic time; this property is known for Toda lattice \cite{Flaschka}. It is also known that the Korteweg-de-Vries equation \cite{LaxLevermore1} has a similar behaviour.
      \begin{figure}[H]
	\caption{Comparison between $\partial_x \phi^N$ (black curve) and  $\partial_x \phi$ (red curve) for Riemann shock initial conditions, and a non-linear potential. $N=5000$, $\tau=0.075$, $W(u)=\frac{u^6}{6}$.}\label{figfig3}
        \begin{center}
          \includegraphics[scale=1]{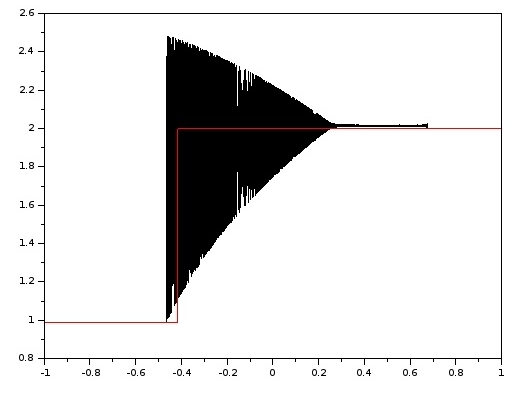}
        \end{center}
      \end{figure}

      \begin{theorem}\label{ThmcvNLin}
	Let $W \in \mathcal{C}^3([a,b])$ satisfy \eqref{Convex} and \eqref{SConvex}. Assume that $\phi^x_0, \phi^\tau_0 \in \mathcal{C}^1_{p,x}$ and $\phi_l, \phi_r \in \R$ satisfy \eqref{DirSet1} and \eqref{Riemannbord}, for $u_l, u_r \in \R$. Let $T_0>0$.\\
	Let $X_j^N(t)$ be the solution of \eqref{Newton} for the initial and boundary conditions \eqref{IDD} and \eqref{BCD}. Suppose that:
	\begin{align}\label{BorneLinfty}
	  U_j^N(t) \in [a,b] && \forall j \in \[-N,N-1\], \forall N>0,  \forall t \in [0,NT_0].
	\end{align}
	Then there exists a $D$-compatible $T\leq T_0$.\\      
	Assume that $\phi \in W^{1,\infty}_{\tau,x}$, is an entropy solution of \eqref{WE} for the initial and boundary conditions \eqref{IDC} and \eqref{BCC}, that $T$ is $C$-compatible and that there exists $T_1<T$ satisfying:
	\begin{equation}\label{PertEc}
	  \mathcal{E}_C(T_1)<\mathcal{E}_C(0).
	\end{equation}
	Then $\phi^N$ does \textbf{not} converge to $\phi$ in the sense of distribution in space and time $D'_{\tau,x}$.
      \end{theorem}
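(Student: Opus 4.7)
The plan is to argue by contradiction: assume $\phi^N\to\phi$ in $D'_{\tau,x}$ and derive strong convergence of the full discrete energy density to the continuous one, which will contradict~\eqref{PertEc}. The hypothesis $U_j^N\in[a,b]$ gives a uniform $L^\infty$-bound on $\partial_x\phi^N$, while conservation of $\mathcal{E}_D$ together with $W$ bounded below on $[a,b]$ gives a uniform $L^2$-bound on $\xi^N$. Under $\phi^N\to\phi$ in $D'_{\tau,x}$, these upgrade to weak-$\ast$ (resp.\ weak-$L^2$) convergences $\partial_x\phi^N\rightharpoonup\partial_x\phi$ and $\xi^N\rightharpoonup\partial_\tau\phi$. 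I would also establish $D$-compatibility of some $T\le T_0$ from the discrete finite-speed-of-propagation property (the \emph{light cone}), so that the combination of $D$- and $C$-compatibilities forces $\partial_x\phi^N$ and $\xi^N$ to converge strongly to their continuous counterparts in a neighbourhood of $x=\pm 1$.

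Next I would prove strong convergence of $\partial_x\phi^N$ to $\partial_x\phi$ in $L^p_{\mathrm{loc}}$ for any $p<\infty$ via a Young-measure argument. Let $\nu_{\tau,x}$ denote the Young measure generated by the bounded sequence $\partial_x\phi^N$. Passing to the weak limit in the distributional form of the Newton equation (whose macroscopic rewriting is $\partial_\tau\xi^N=\partial_x W'(\partial_x\phi^N)$, up to an $H^{-1}$-vanishing consistency residual) yields $\partial_\tau^2\phi=\partial_x\,\overline{W'(\partial_x\phi)}$ in $D'_{\tau,x}$, with $\overline{W'(\partial_x\phi)}:=\int W'(s)\,d\nu_{\tau,x}(s)$. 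Since $\phi$ itself is an entropy solution of~\eqref{WE}, it also satisfies $\partial_\tau^2\phi=\partial_x W'(\partial_x\phi)$, so $\partial_x[\,\overline{W'(\partial_x\phi)}-W'(\partial_x\phi)\,]=0$ in $D'_{\tau,x}$; the remaining $\tau$-dependent constant vanishes by the boundary regime provided by $D$- and $C$-compatibility. Jensen's inequality, combined with the strict convexity of $W'$ (assumption~\eqref{SConvex}), then forces $\nu_{\tau,x}=\delta_{\partial_x\phi(\tau,x)}$, which is equivalent to strong $L^p_{\mathrm{loc}}$-convergence of $\partial_x\phi^N$ to $\partial_x\phi$.

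To promote the weak convergence $\xi^N\rightharpoonup\partial_\tau\phi$ to strong convergence, I would invoke the Murat--Tartar div-curl lemma. Set $F^N=(\xi^N,-W'(\partial_x\phi^N))$ and $G^N=(\xi^N,\partial_x\phi^N)$. The discrete Newton equation and the trivial relation $\dot U_j=V_{j+1}-V_j$ give $\mathrm{div}\,F^N$ and $\mathrm{curl}\,G^N$ compact in $H^{-1}_{\mathrm{loc}}$, while both families are bounded in $L^2_{\mathrm{loc}}$. The div-curl lemma then yields weak-$D'$ convergence of $F^N\cdot G^N=(\xi^N)^2-\partial_x\phi^N\,W'(\partial_x\phi^N)$ to $(\partial_\tau\phi)^2-\partial_x\phi\cdot\overline{W'(\partial_x\phi)}$. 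By the previous paragraph the two second terms converge strongly and cancel, so $(\xi^N)^2\rightharpoonup(\partial_\tau\phi)^2$ in $D'_{\tau,x}$; together with $\xi^N\rightharpoonup\partial_\tau\phi$ weakly in $L^2$, the convergence of norms on compacts yields strong $L^2_{\mathrm{loc}}$-convergence.

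With strong convergence of both energy integrands in hand, we pass to the limit in $\int_0^T\mathcal{E}_D^N(\tau)\,d\tau=T\,\mathcal{E}_D^N(0)$. The right-hand side tends to $T\,\mathcal{E}_C(0)$ by convergence of the initial Riemann sums, while the left-hand side tends to $\int_0^T\mathcal{E}_C(\tau)\,d\tau$ (the $D$-compatibility controls the contribution near the boundary, where strong convergence is also available). This forces $\int_0^T\mathcal{E}_C(\tau)\,d\tau=T\,\mathcal{E}_C(0)$, which, combined with the non-increasing character of $\mathcal{E}_C$ for entropy solutions, contradicts~\eqref{PertEc}. I expect the hardest step to be the $H^{-1}$-compactness of the consistency residuals fed into the div-curl lemma: since $\xi^N$ is only $L^2$-bounded and not $L^\infty$, Murat's compactness criterion has to be applied to carefully decomposed error terms rather than through a straightforward $L^\infty\cap L^1$ argument.
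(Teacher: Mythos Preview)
Your overall strategy coincides with the paper's: obtain a $D$-compatible $T$ from the discrete light-cone estimate, argue by contradiction, upgrade the assumed distributional convergence first to strong convergence of $\partial_x\phi^N$ (via a Young-measure/Jensen argument exploiting the strict convexity of $W'$), then to strong convergence of $\partial_\tau\phi^N$, and finally contradict~\eqref{PertEc} by comparing $\int_0^T\mathcal{E}_C(\tau)\,d\tau$ with $T\,\mathcal{E}_C(0)$.

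The two upgrade steps are, however, carried out differently. For $\partial_x\phi^N$, you deduce $\overline{W'(\partial_x\phi)}=W'(\partial_x\phi)$ by passing to the limit in the Newton equation itself and killing the $\tau$-dependent constant at the boundary; the paper instead tests both the discrete and continuous systems against the single function $(T-\tau)(1-x)$, obtaining two explicit integral identities whose difference yields $\int_0^T\!\!\int_{-1}^1(T-\tau)\big(\int W'(\lambda)\,d\nu_{\tau,x}-W'(\partial_x\phi)\big)=0$ directly, so that Jensen with $W'''>0$ forces $\nu_{\tau,x}=\delta_{\partial_x\phi}$. For $\partial_\tau\phi^N$, you invoke the Murat--Tartar div--curl lemma, whereas the paper performs the compensated-compactness computation by hand: it introduces an interpolant $\Psi^N$ with $\partial_x\Psi^N=\partial_\tau\zeta^N$ \emph{exactly} and $\Psi^N\to W'(\partial_x\phi)$ in $L^2$, sets $\alpha^N=\zeta^N-\partial_\tau\phi$, $\beta^N=\Psi^N-W'(\partial_x\phi)$, $\gamma^N=\partial_x\phi^N-\partial_x\phi$, and uses the identities $\partial_\tau\alpha^N=\partial_x\beta^N$, $\partial_x\alpha^N=\partial_\tau\gamma^N$ to expand $\int(\alpha^N)^2$ as a sum of four boundary/product terms that all vanish. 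This is precisely a div--curl argument, but the choice of $\zeta^N=\partial_\tau\phi^N$ and of $\Psi^N$ makes the divergence and the curl \emph{identically zero}, so no appeal to Murat's $H^{-1}$-compactness criterion is needed. If you run your argument with the piecewise-constant $\xi^N$ and with $W'(\partial_x\phi^N)$, the residuals are genuine and, as you anticipate, their $H^{-1}$-compactness is delicate; switching to the paper's interpolants removes this difficulty entirely.
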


      \begin{remark}[Entropy solution]
	In Theorem \ref{ThmcvNLin}, we only compare $\phi^N$ to the \textit{entropy}  solution $\phi$ of \eqref{WE}. 
	We think that $\phi^N$ cannot converge to \textit{any} weak solution of \eqref{WE}.
	Indeed, if $\phi^N$ converges to $\phi$, which is a solution of \eqref{WE}, Lemma \ref{LemRenforce} below implies that $\partial_x \phi^N$ converges strongly to $\partial_x \phi$.
	But numerical experiments show that $\partial_x \phi^N$ oscillates too much so that it cannot converge strongly to anything: this justifies our conclusion.
      \end{remark}
      
      \begin{remark}[Reversibility]
	\eqref{Newton}~is a reversible system, but \eqref{WE} is not when shocks occur, whereas both systems are reversible as long as the solution $\phi$ of \eqref{WE} remains smooth enough. In the first case, the discrete system does not converge to the continuous one (Theorem \ref{ThmcvNLin}), but it does in the second case (Theorem \ref{ThBLL}).
      \end{remark}

      \begin{remark}[Convergence breakdown]
	Suppose that $W$ satisfies \eqref{Convex} and \eqref{SConvex}.
	Let $\phi^x_0$ and $\phi^\tau_0$ be smooth functions. Define $X_j^N$ and $\phi$ as in Theorem \ref{ThmcvNLin}.
	Suppose that there exists a $DC$-compatible $T>0$ such that $\mathcal{E}_C(T)<\mathcal{E}_C(0)$ -in other words, a shock occurs.
	If Conjecture~\ref{ConjecDur} above holds for initial data $\phi^x_0$ and $\phi^\tau_0$, it leads to the following paradoxical situation:
	\begin{enumerate}
	  \item{until a certain time $T_0<T$, $\phi(\tau,.)$ is sufficiently smooth, so that Theorem \ref{ThBLL} applies. Thus:
	  \begin{equation*}
	    \phi^N \rightarrow \phi \text{ in } \mathcal{C}\lt([0,T_0]\times[-1,1] \rt),
	  \end{equation*}}
	  \item{applying Theorem \ref{ThmcvNLin}, we get that $\phi^N$ does not converge to $\phi$ in $D'([0,T_1] \times[-1,1])$ as soon as $\mathcal{E}_C(T_1)< \mathcal{E}_C(0)$.}
	\end{enumerate}
	Therefore, shocks break the discrete-to-continuum convergence of \eqref{Newton} to \eqref{WE}.
      \end{remark}
      
      It is immediate to see that \eqref{Newton} instantly propagates perturbations in the discrete system. It can be proven by linearizing \eqref{Newton} and assuming a small perturbation $\epsilon$ on a fixed $j_0$-th particle:
      \begin{align*}
        \tilde{X}_{j}(0) = X_{j}(0)+\epsilon \delta_{j_0}^j, && \frac{d X_j}{dt}(0) = \frac{d \tilde{X}_j}{dt}(0).
      \end{align*}
      We assume that both $X_j$ and $\tilde{X}_j$ satisfy \eqref{Newton}. Integrating iteratively \eqref{Newton} for small time $\Delta t$, we get (for $j>0$) at leading order in $\epsilon$  (the proof of this formal expansion is in the spirit of the proof of Proposition \ref{proplightcone} below):
      \begin{align*}
        \tilde{X}_{j_0+j}(\Delta t)-X_{j_0+j}(\Delta t )\simeq \epsilon \frac{(\Delta t)^{2j}}{(2j)!} \prod_{k=0}^{j-1} W''\lt(U_{j_0+k}(0)\rt).
      \end{align*}
      However, on the macroscopic level, this propagation has a finite speed. This paradox is due to the fact that the influence of perturbation on $x_0$ at $t_0$ decays exponentially outside a cone $|x-x_0|\leq c \lt|t-t_0\rt|$. It is noticeable that this light cone property is an important feature of hyperbolic systems. It is however a key ingredient to prove that~\eqref{Newton} does \textit{not} converge to \eqref{WE}.
      
      We formalize the fact that perturbations of the discrete system propagate with a finite speed on the macroscopic level by the following theorem:
	
      \begin{theorem}\label{ThLum}
        Let $W \in \mathcal{C}^2(\R)$ satisfy \eqref{Convex}. Let $T>0$. Assume that $X^N_j(t)$ and $\tilde{X}^N_j(t)$ satisfy \eqref{Newton}, for $j \in \[0,N-1\]$, $t \in [0,NT]$, with right boundary condition $\tilde{X}^N_N=X^N_N=N\phi_r$. We denote by:
        \begin{align*}
          \tilde{V}^N_j(t)=\frac{d}{dt}\tilde{X}^N_j(t), && \tilde{U}^N_j(t):= \tilde{X}^N_{j+1}(t)-\tilde{X}^N_j(t).
        \end{align*}
        Suppose that 
        \begin{align*}
          X^N_j(t=0)=\tilde{X}^N_j(t=0),&  \forall j \in \[1,N-1\],\\
          V^N_j(t=0)=\tilde{V^N_j}(t=0),& \forall j \in \[1,N-1\].
        \end{align*}
        Assume that there exists $C \in \R_+$ such that, $\forall N>0$:
        \begin{equation}\label{DEC}
          \sup_{t \in \R_+} \lt( \sum_{j=0}^{N-1} W\lt(U^N_j(t)\rt) + \frac{1}{2} \sum_{j=0}^{N-1} \lt(V^N_j(t)\rt)^2 \rt) \leq C N,
        \end{equation}
	and:
        \begin{align}
          &K=\sup_{u \in ]u_1,u_2[} \lt| W''(u) \rt| < +\infty \label{HypCone1},          
        \end{align}
        where:
        \begin{align*}
          &u_1:=\inf_{ \scriptsize{\begin{array}{l l } N>0,\\ j \in \[-N,N-1\],\\ t \in [0,NT] \end{array}}}
          \lt\{\min \lt(U^N_j(t),\tilde{U}^N_j(t) \rt) \rt\},\\
          &u_2:=\sup_{ \scriptsize{\begin{array}{l l } N>0,\\ j \in \[-N,N-1\],\\ t \in [0,NT] \end{array}}}
          \lt\{\max \lt(U^N_j(t),\tilde{U}^N_j(t) \rt) \rt\}.
        \end{align*}
	Let $c=\exp(2) \sqrt{K}$. Let $x \in ]0,1[$. Then, for all $\tau < \frac{x}{c}$, we have:
        \begin{align}
	  \label{ThLUmU}
          &\lim_{N\rightarrow + \infty} \lt\{N \sup_{\scriptsize{\begin{array}{l} 0\leq t < N\tau,\\ j > Nx \end{array}}} 
          \lt| U_j^N(t) - \tilde{U}_j^N(t) \rt| \rt\} =0,\\
          \label{ThLumV}
          &\lim_{N\rightarrow + \infty} \sup_{\scriptsize{\begin{array}{l} 0\leq t < N\tau,\\ j > Nx \end{array}}}
          \lt|V_j^N(t)-\tilde{V}^N_j(t) \rt|=0.
        \end{align}
      \end{theorem}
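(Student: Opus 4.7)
The plan is a discrete Picard iteration that exploits the vanishing initial data for $j\geq 1$ together with the uniform Lipschitz bound on $W'$ over $]u_1,u_2[$. Set $Y_j(t):=X^N_j(t)-\tilde X^N_j(t)$ and $W_j(t):=Y_{j+1}(t)-Y_j(t)=U_j^N(t)-\tilde U_j^N(t)$. By the mean value theorem there exist coefficients $a_j(t)$ with $|a_j(t)|\leq K$ (from \eqref{HypCone1}) such that $W'(U^N_j)-W'(\tilde U^N_j)=a_j(t)\,W_j(t)$. Subtracting the two copies of \eqref{Newton} then gives
\begin{equation*}
  \ddot Y_j(t) = a_j(t)\,W_j(t) - a_{j-1}(t)\,W_{j-1}(t),
\end{equation*}
and differencing in $j$ yields $\ddot W_j = a_{j+1}W_{j+1}-2a_j W_j+a_{j-1}W_{j-1}$. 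The hypotheses force $W_j(0)=\dot W_j(0)=0$ for all $j\geq 1$, while the very definition of $u_1,u_2$ already supplies the uniform envelope $|W_j(t)|\leq u_2-u_1=:R$, valid for every $j$, $t$ and $N$.

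For $j\geq 1$ the vanishing initial data allow me to write $W_j(t)=\int_0^t(t-s)\,\ddot W_j(s)\,ds$. The pointwise estimate $|\ddot W_j(s)|\leq 4K\max_{k\in\{j-1,j,j+1\}}|W_k(s)|$, combined with the monotone nonincreasing quantity $B_J(t):=\sup_{j\geq J,\,s\leq t}|W_j(s)|$, produces the recursion
\begin{equation*}
  B_{J+1}(t)\leq 4K\int_0^t(t-s)\,B_J(s)\,ds,\qquad J\geq 0,\qquad B_0(t)\leq R,
\end{equation*}
from which a one-line induction yields the closed form $B_J(t)\leq R\,(4Kt^2)^J/(2J)!$. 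An entirely parallel iteration, starting from $\dot Y_j(t)=\int_0^t\ddot Y_j(s)\,ds$ (again valid for $j\geq 1$), gives an analogous bound $\sup_{j\geq J}|\dot Y_j(t)|\lesssim R\,(4Kt^2)^J/(2J-1)!$.

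To conclude, I would substitute $J=\lceil Nx\rceil$ and $t=N\tau$ and invoke Stirling's formula $(2J)!\geq C\sqrt{J}\,(2J/e)^{2J}$, turning the estimate into
\begin{equation*}
  B_{\lceil Nx\rceil}(N\tau)\leq \frac{CR}{\sqrt{Nx}}\left(\frac{e^{2}K\tau^{2}}{x^{2}}\right)^{Nx}.
\end{equation*}
The base is strictly less than $1$ once $\tau<x/c$ for the $c=\exp(2)\sqrt K$ of the statement (with a small safety margin absorbed in the Stirling bookkeeping), so the right hand side decays super-exponentially in $N$ and easily kills both the polynomial prefactor and the extra $N$ that appears in \eqref{ThLUmU}; the parallel bound gives \eqref{ThLumV}. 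The delicate step, which I expect to be the main obstacle, is the monotone recursion itself: the bound $|W_k(s)|\leq B_{j-1}(s)\leq B_J(s)$ for $k\in\{j-1,j,j+1\}$ requires $j\geq J+1$ and the uniform envelope $B_0\leq R$, and it is precisely here that hypothesis \eqref{HypCone1} together with the definition of $u_1,u_2$ (and, through it, the energy bound \eqref{DEC}) intervene. Once that bookkeeping is settled, the remainder is a routine combination of Picard iteration and Stirling's formula.
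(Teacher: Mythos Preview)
Your approach is essentially the paper's: define a monotone envelope over tails $j\geq J$, derive the double-integral recursion $S_{J+1}(t)\leq 4K\int_0^t\int_0^s S_J(r)\,dr\,ds$ from the differenced Newton equations and the vanishing initial data for $j\geq 1$, iterate to obtain the factorial bound $(2\sqrt{K}t)^{2J}/(2J)!$, and finish with Stirling. The paper carries this out via its Proposition~\ref{proplightcone} and the proof of Theorem~\ref{ThLum} reads almost line for line like your sketch.

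The one substantive difference is your seed. You take $B_0\leq R:=u_2-u_1$, which presupposes $u_1,u_2$ finite; the theorem only assumes $K<\infty$, which can hold with $u_1=-\infty$ or $u_2=+\infty$ (e.g.\ $W$ quadratic). The paper instead bounds the difference at $j=0$ by $M_N\leq C\sqrt{N}$ via the energy hypothesis~\eqref{DEC} and then runs a Gr\"onwall step to get $S_0(t)\leq M_N\exp(2t\sqrt{K})$; this extra exponential is exactly what turns the Stirling threshold $e\sqrt{K}$ into the stated $c=e^2\sqrt{K}$. Your cleaner seed (when $R<\infty$) actually gives a sharper cone speed $e\sqrt{K}$, more than enough for the statement; but to cover the general case you should fall back on the energy bound for the seed, which you already flag as the ``delicate step''. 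With that adjustment your argument and the paper's coincide.
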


      \begin{figure}[H]
	\caption{Light cone on the surface $\lt(t,U_j^N(t)\rt)$, for Riemann initial conditions corresponding to a shock wave, for a non-linear potential $W(u)=\frac{u^6}{6}$}
	\begin{minipage}{0.25\linewidth}
	  \begin{center}
	    \begin{tikzpicture}[scale=2]
	      \coordinate (O) at (0,0);
	      \coordinate (A) at (0,1);
	      \coordinate (B) at (1,0);
	      \coordinate (C) at ({sqrt(2)/2},{sqrt(2)/2});
	      \draw [->] (O)--(A);
	      \draw [->] (O)--(B);
	      \draw [->] (O)--(C);
	      \draw (B) node [below] {$j$};
	      \draw (A) node [left] {$U_j$};
	      \draw (C) node [right] {$t$};
	    \end{tikzpicture}
	  \end{center}
	\end{minipage}
	\begin{minipage}{0.7\linewidth}
	  \begin{center}
	    \includegraphics[scale=0.3]{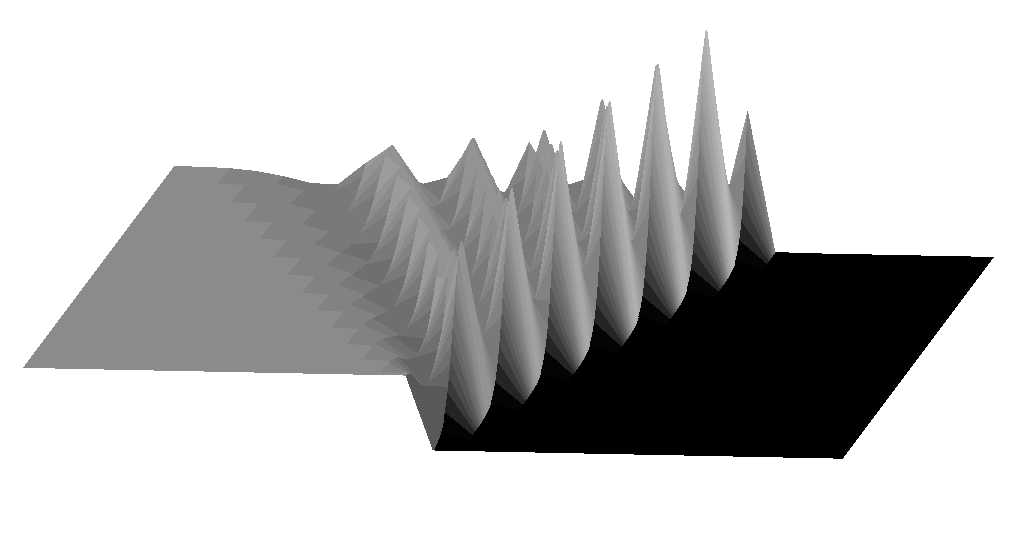}
	  \end{center}
	\end{minipage}
      \end{figure}
      
      A few remarks are in order:
      \begin{remark}
        The speed $c$ in Theorem \ref{ThLum} is not optimal -we see it from numerical experiments- but it has the same order as the natural speed of \eqref{WE}, given by \eqref{RH}. Formally:
        \begin{equation*}
          \exp(2)\sqrt{K} \propto \sup_{u \in [u_l,u_r]} \sqrt{|W''(u)|} \propto \sqrt{\lt|\frac{W'(u_r)-W'(u_l)}{u_r-u_l} \rt|}.
        \end{equation*}
        The above formal calculation is exact when $W$ is quadratic.
      \end{remark}

      \begin{remark}
        Assumption \eqref{HypCone1} of Theorem \ref{ThLum} is automatically fulfilled if there exists $\alpha, \beta \in \R$ such that $\alpha\leq W''(u) \leq \beta, \forall u \in \R$. This is the case when the potential $W$ is quadratic. However, such a bound cannot hold if $W'$ is strongly convex: one needs to know a priori that the distances $U_j^N$ between particles is bounded uniformly in $N$.
      \end{remark}
      
  \subsection{Uniform $L^\infty_t\lt(l_j^\infty\rt)$ bound}
  
      Most of the results we are able to prove in the non-linear case require the assumption that, for given initial data, the distance between particles remains bounded uniformly in $N$ (that is, \eqref{BorneLinfty} is satisfied). We have not been able to prove that this assumption is fulfilled. We formulate the following conjecture:
      \begin{conjecture}\label{ConjecDur}
	Suppose that $W\in \mathcal{C}^2(\R)$ satisfies \eqref{Convex}. Assume that $\phi_0^x$ and $\phi^\tau_0 \in \mathcal{C}^1_{p,x}$. Then, there exist $T>0$ and $a<b \in \R$ depending only on $W$, $\phi_0^x$ and $\phi_0^\tau$ such that, for $X_j(t)$ satisfying \eqref{Newton} for the initial and boundary conditions \eqref{IDD} and \eqref{BCD}:
	\begin{align}
	  \label{ConjecDur1}
	  a \leq U_j^N(N\tau) \leq b, &&\forall j \in \[-N,N-1\], \forall N \in \mathbb{N}, \forall \tau \in [0,T].
	\end{align} 
      \end{conjecture}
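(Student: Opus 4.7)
The plan is to combine conservation of the discrete energy $\mathcal{E}_D$ with the finite-speed-of-propagation property of Theorem~\ref{ThLum} in order to reduce the problem to local estimates, and then to use a discrete analogue of the Riemann invariants for the $p$-system to push past the barrier of the $\ell^2$ bound. The energy bound alone is insufficient: since $\phi_0^x,\phi_0^\tau\in\mathcal{C}^1_{p,x}$, the initial energy per particle is uniformly bounded in $N$, so conservation of $\mathcal{E}_D$ together with strong convexity gives only
$$\sum_{j=-N}^{N-1}\bigl(U_j^N(t)\bigr)^2\leq C N,$$
which permits individual $|U_j^N|$ to be as large as $O(\sqrt{N})$. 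A separate mechanism is therefore required.

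First I would treat the quadratic case $W(u)=u^2/2$ (which is taken up in more detail in Section~\ref{SecConjec}): the system is linear with constant coefficients and diagonalizes exactly in the discrete sine basis adapted to the Dirichlet (or periodic) conditions. Writing $U_j^N(t)$ as a Dirichlet-kernel sum over the eigenmodes and using summation by parts together with the piecewise-$\mathcal{C}^1$ regularity of the initial data, one can bound $\sup_j|U_j^N(t)|$ uniformly in $N$ and locally uniformly in $t$. This both confirms the conjecture in the linear case and gives a template for the error terms that arise in the nonlinear analysis.

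For the genuinely nonlinear case, I would introduce the discrete Riemann invariants $R_j=V_j+G(U_j)$ and $S_j=V_j-G(U_j)$, where $G(u)=\int_0^u\sqrt{W''(s)}\,ds$, and derive the discrete system they satisfy. Formally this is a pair of discrete transport equations perturbed by commutator-type terms involving second differences of $R$ and $S$. The strategy would then be: on a short time interval, apply the light cone of Theorem~\ref{ThLum} to localize the analysis to cells of macroscopic size $O(1)$; run a discrete maximum principle on the envelopes $\max_j R_j$ and $\min_j S_j$, treating the commutator terms as perturbations controlled by the $\ell^2$ energy bound; and bootstrap to extend the bound up to a time $T>0$ independent of $N$.

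The main obstacle is that the numerical evidence (Figure~\ref{figfig3}) shows, as soon as a shock forms, $O(1)$-amplitude oscillations of wavelength $O(1/N)$ in $\partial_x\phi^N=U^N$. Any valid proof must explain why the amplitude of these microscopic oscillations saturates at $O(1)$ rather than escaping to infinity, and the naive discrete maximum principle above breaks down precisely because the commutator terms in the discrete Riemann-invariant equations become large on $O(1/N)$ scales. A rigorous treatment seems to require a discrete analogue of Whitham modulation or Lax--Levermore theory, in which the oscillations are described by a slowly modulated multi-phase wave whose modulation parameters remain bounded through a limiting hyperbolic system; for a generic convex $W$ (without the integrable structure of the Toda lattice) such a theory is not available, which is why the authors are forced to formulate the statement only as a conjecture.
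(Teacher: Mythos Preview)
The statement you are addressing is labeled \emph{Conjecture} in the paper, and the authors state explicitly that they ``have not been able to prove that this assumption is fulfilled.'' There is therefore no proof in the paper to compare against. What the paper does provide in Section~\ref{SecConjec} is (i) numerical evidence for a range of potentials and initial data, (ii) the remark that when the continuous solution $\phi$ stays sufficiently smooth the conjecture follows from Theorem~\ref{ThBLL}, and (iii) Proposition~\ref{ConjSubtl}, which shows that the piecewise-$\mathcal{C}^1$ regularity on the initial data cannot be dropped, by exhibiting (in the quadratic case, via the explicit diagonalization \eqref{ExprElle}) bounded initial data for which $\|U_j^N(N\tau_0)\|_{l^\infty_j}\to\infty$.

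Your proposal is not a proof either, and you correctly acknowledge this in your final paragraph: the discrete Riemann-invariant argument you sketch breaks down exactly where the $O(1)$-amplitude, $O(1/N)$-wavelength oscillations appear, and no substitute mechanism is supplied. So on the main point you and the paper agree: the statement is open.

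One substantive discrepancy is worth flagging. You assert that in the quadratic case the explicit diagonalization together with summation by parts and the $\mathcal{C}^1_{p}$ regularity of the data yields a uniform $l^\infty$ bound, thereby ``confirming the conjecture in the linear case.'' The paper does \emph{not} claim or prove this; its use of the diagonalization goes in the opposite direction, namely to establish the negative Proposition~\ref{ConjSubtl}. If you believe the linear case is genuinely settled by your argument, that would be a new contribution and would need to be written out carefully (in particular, controlling the Dirichlet-kernel sums uniformly in $t\in[0,NT]$, not just for fixed $t$, is where the difficulty lies). As written, this step is asserted rather than established, so even the linear case remains a gap in your proposal relative to what is actually known.
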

      
      Note that in the case of Riemann problem \eqref{Rpb}, the initial conditions satisfy the hypotheses of Conjecture \ref{ConjecDur}.
      We checked Conjecture \ref{ConjecDur} numerically for a large set of piecewise smooth initial data, with potentials of the form $W(u)=Au^\gamma+Bu^2$, $\gamma>2$, $A, B \in \R_+$. 
      When $\phi$ is sufficiently smooth, Conjecture \ref{ConjecDur} can be proven (by Theorem~\ref{ThBLL}).

      Let us point out the fact that it seems necessary to require some smoothness on the initial conditions in Conjecture \ref{ConjecDur}. In other words, one cannot hope that, for $X^N_j$ solution of \eqref{Newton}, for given $T>0$, $\lt\|U^N_j\rt\|_{l^\infty_{j,t}}$ is controlled by $\lt\|U^N_j(t=0)\rt\|_{l^\infty_{j}}$ and $\lt\|V^N_j(t=0)\rt\|_{l^\infty_{j}}$ uniformly in $N$.       
      Indeed, we can prove the following proposition:
      \begin{proposition}\label{ConjSubtl}
        Let $W(u)=\frac{u^2}{2}$, and $\tau_0>0$. There exists a sequence of initial conditions $U_j^N(t=0)$, $V^N_j(t=0)$ such that, for $X^N_j(t)$ the corresponding solutions of \eqref{Newton} with periodic boundary conditions, we have:
        \begin{align*}
          \lt\|U_j^N(t=0)\rt\|_{l^\infty_j} \leq 1,
          &&\lt\|V_j^N(t=0)\rt\|_{l^\infty_j}\leq 1,
          &&\lt\|U_j^N(N\tau_0)\rt\|_{l^\infty_j}\underset{N\rightarrow\infty}{\rightarrow} +\infty.
        \end{align*}
      \end{proposition}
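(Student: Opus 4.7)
Since $W(u) = u^2/2$, the variable $U_j = X_{j+1}-X_j$ obeys the discrete linear wave equation $\ddot U_j = U_{j+1}-2U_j+U_{j-1}$, which with periodic boundary conditions is exactly solved in the Fourier basis: $\hat U_m(t) = \hat U_m(0)\cos(\omega_m t) + \omega_m^{-1}\hat Z_m(0)\sin(\omega_m t)$ for $\omega_m = 2|\sin(\pi m/(2N))|$, $Z = \dot U$. The plan is to engineer the initial Fourier data so that every mode arrives at the same value at time $t = N\tau_0$: Fourier inversion then spikes the solution at $j=0$ with height of order $\sqrt N$, whereas the initial data stays bounded in $l^\infty_j$ thanks to van der Corput cancellation.

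\textbf{Construction.} Fix an even cutoff $\psi \in C^\infty_c((-1,1))$ vanishing on $[-a,a]$ for some $a>0$, with $0 \leq \psi \leq 1$ and $\int\psi > 0$, and set, for $m \in \{-N,\ldots,N-1\}$,
\begin{align*}
  \hat U_m(0) = \frac{\psi(m/N)}{\sqrt N}\cos(\omega_m N\tau_0),\qquad \hat Z_m(0) = \frac{\omega_m\,\psi(m/N)}{\sqrt N}\sin(\omega_m N\tau_0),
\end{align*}
with the convention $\hat U_0(0) = \hat Z_0(0) = 0$ (consistent with $\psi(0)=0$, and providing the required periodic compatibility $\sum_j U_j(0)=0$). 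Direct substitution gives $\hat U_m(N\tau_0) = \psi(m/N)/\sqrt N$, whence
\begin{align*}
  U^N_0(N\tau_0) = \sum_{m=-N}^{N-1}\frac{\psi(m/N)}{\sqrt N} \sim \sqrt N\int_{-1}^1\psi(x)\,dx\underset{N\to\infty}{\longrightarrow}+\infty.
\end{align*}
The velocity is recovered via $\hat V_m(0) = \hat Z_m(0)/(e^{i\pi m/N}-1)$ (with $\hat V_0 = 0$); using the key identity $|e^{i\pi m/N}-1| = \omega_m$, the factor $\omega_m$ in $\hat Z_m(0)$ cancels and one finds $\hat V_m(0) = \psi(m/N)\sin(\omega_m N\tau_0)\,e^{i\alpha_m}/\sqrt N$ for some unit-modulus phase $\alpha_m$. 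Evenness of $\psi$ and $\omega_{-m}=\omega_m$ ensure that $U^N(0),V^N(0)$ are real.

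\textbf{Uniform bound on the initial data.} Expanding the cosine and sine as exponentials, both $U^N_j(0)$ and $V^N_j(0)$ split into oscillatory sums of the form
\begin{align*}
  S^\pm_j = \frac{1}{\sqrt N}\sum_m \psi(m/N)\,e^{i\Phi^\pm_m(j)},\qquad \Phi^\pm_m(j) = \pm\omega_m N\tau_0 + \pi mj/N,
\end{align*}
up to bounded, smooth phase factors in the case of $V$. A direct computation yields $|\partial_m^2\Phi^\pm_m| = (\pi^2\tau_0/(2N))|\sin(\pi m/(2N))|$, which is bounded below by $c(a)/N$ on $\mathrm{supp}\,\psi$. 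Van der Corput's lemma for oscillatory sums with a smooth amplitude then delivers $|S^\pm_j|\lesssim 1$ uniformly in $j$ and $N$; for those $j$ whose associated stationary point $m_\ast$ (solving $\cos(\pi m_\ast/(2N)) = \mp j/(N\tau_0)$) lies outside $\mathrm{supp}\,\psi$, summation by parts gives the better bound $O(1/\sqrt N)$. A final rescaling by a fixed constant reduces $\|U^N(0)\|_{l^\infty_j}$ and $\|V^N(0)\|_{l^\infty_j}$ below $1$ while preserving $\|U^N(N\tau_0)\|_{l^\infty_j}\to+\infty$.

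\textbf{Main obstacle.} The delicate point, and the raison d'être of the cutoff $\psi$ near $m=0$, is that $\omega_m = 2|\sin(\pi m/(2N))|$ is only Lipschitz at $0$: $\omega''_m$ carries the vanishing factor $\sin(\pi m/(2N))$. Without the cutoff, the degenerate stationary points of $\Phi^\pm_m$ as $m\to 0$ would produce Airy-type contributions of order $N^{2/3}$ at the edges of the ``light cone'' $|j|\approx N\tau_0$, spoiling the uniform $O(1)$ bound on the initial data (one would only get $O(N^{1/6})$). Smoothly excising these modes restores the non-degeneracy $|\Phi''_m|\gtrsim 1/N$ on $\mathrm{supp}\,\psi$ and is exactly what makes the standard van der Corput bound $|\Phi''|^{-1/2}\lesssim \sqrt N$ go through uniformly in $j$; verifying this uniform estimate is the bulk of the work.
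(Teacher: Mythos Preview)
Your argument is correct and follows a genuinely different route from the paper's. The paper exploits time-reversibility: it starts from the delta initial condition $U_j(0)=\delta_j^0$, $V_j(0)=0$, shows by an elementary block-decomposition of the explicit Fourier sum (linearising $\sin$ on blocks of length $m$, summing the resulting near-geometric series, and optimising $m$ and an exclusion width $\delta$) that $\|U(N\tau_0)\|_{l^\infty_j}+\|V(N\tau_0)\|_{l^\infty_j}\lesssim N^{-1/7}$, and then rescales the time-reversed data to produce the blow-up. You instead build the focusing data directly in Fourier, arranging $\hat U_m(N\tau_0)=\psi(m/N)/\sqrt N$; the $l^\infty$-bound on the initial data then reduces to the \emph{same} oscillatory sum the paper estimates, but your smooth cutoff $\psi$ vanishing near $m=0$ removes the degenerate stationary points of $\Phi^\pm$ (where $\omega_m''$ vanishes) and lets the standard second-derivative van der Corput test, combined with Abel summation, give the clean uniform bound $|S^\pm_j|\lesssim 1$. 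What your approach buys is a sharper rate ($N^{-1/2}$ versus $N^{-1/7}$) and a more conceptual proof via a named lemma; what the paper's approach buys is that it is entirely self-contained, handles all Fourier modes without a cutoff, and highlights the reversibility mechanism explicitly. The two arguments are essentially dual under time-reversal: your $U^N(0)$ is (up to the cutoff $\psi$) the paper's dispersed delta at time $N\tau_0$, run backwards.
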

      The following plot shows the explosion:
      \begin{figure}[H]\caption{Successive pictures of $U_j(N\tau)$. $N=1000$, $\tau=0,0.08,...,0.4$}
	    \begin{center}
	     \includegraphics[scale=0.2]{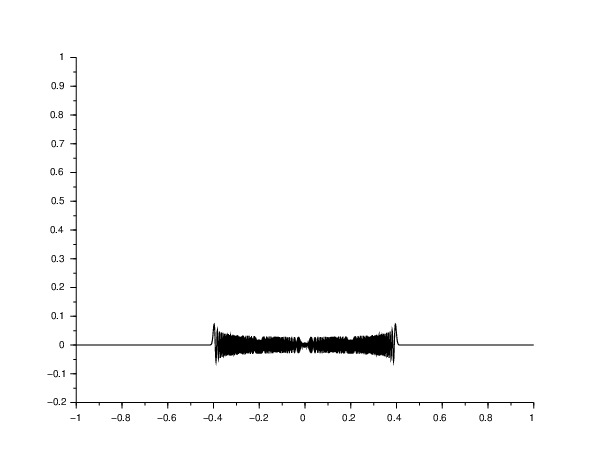}
	     \includegraphics[scale=0.2]{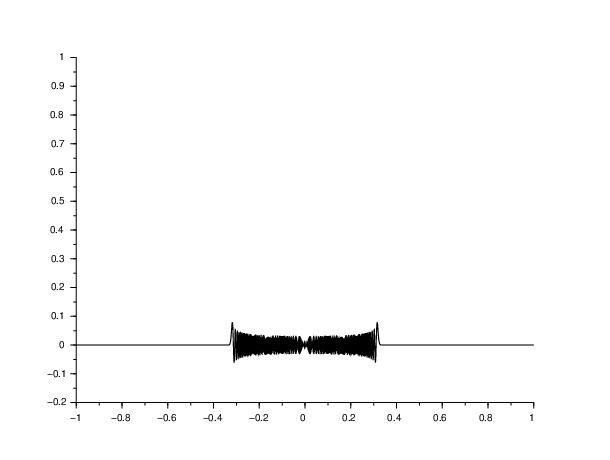}
	     \includegraphics[scale=0.2]{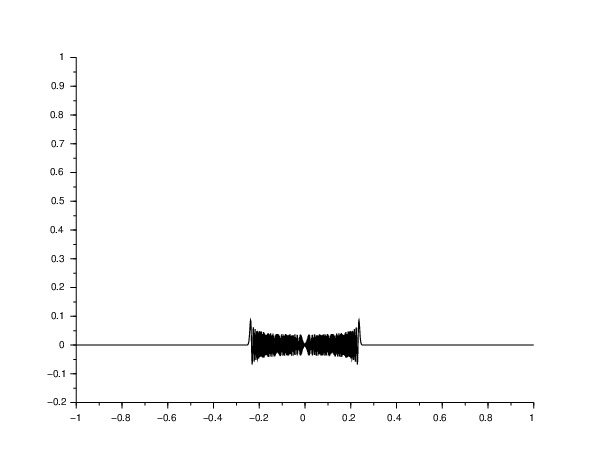}
	     \includegraphics[scale=0.2]{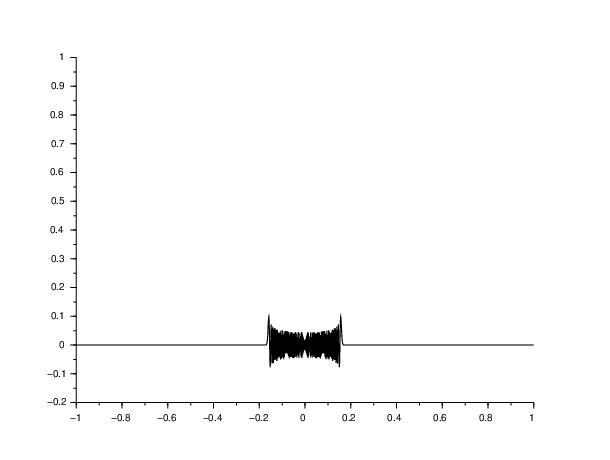}
	     \includegraphics[scale=0.2]{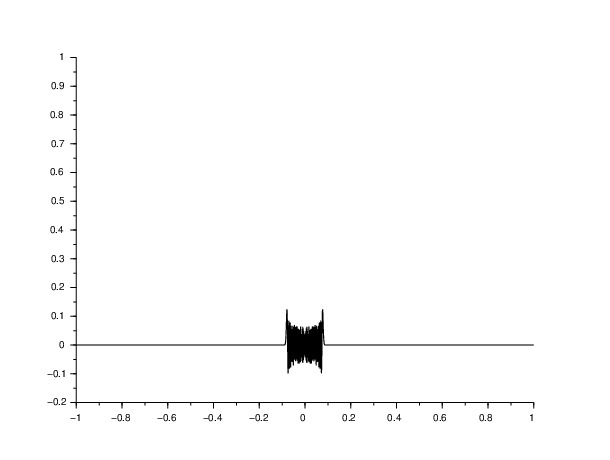}
	     \includegraphics[scale=0.2]{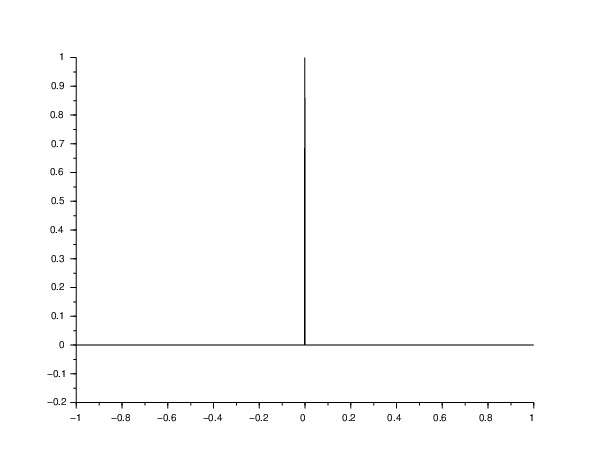}
	    \end{center}
      \end{figure}

    \subsection{Non-existence of discrete shock waves}
    
      A natural question is whether or not there exist non-trivial discrete shock waves for the Newton equation \eqref{Newton}. Should such discrete progressive waves exist, one could expect that they would describe an important feature of the limit of \eqref{Newton} system when $N \rightarrow +\infty$. Unfortunately, we prove that discrete shock waves do not exist, even when the potential is quadratic. More specifically, we prove the following propositions:
      
      \begin{proposition}\label{DiscreteSolitonsLin}
        Suppose $W$ satisfies \eqref{PotQuad}. Then there exist no discrete shock wave for~\eqref{Newton}, in the sense of Definition \ref{DiscShock}.
      \end{proposition}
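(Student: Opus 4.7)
My approach is to exploit linearity via Fourier analysis and then conclude by an almost-periodicity argument. Argue by contradiction: suppose a discrete shock wave $\phi \in \mathcal{C}^2(\R)$ exists with speed $c$ and limits $\phi'(\pm\infty) = u_{l/r}$, $u_l \neq u_r$. Since $W(u)=u^2/2$, equation \eqref{Soliton} reads
\begin{equation*}
c^2 \phi''(x) = \phi(x+1) - 2\phi(x) + \phi(x-1).
\end{equation*}
Set $\psi := \phi'$. Differentiating the identity above shows that $\psi \in \mathcal{C}^1(\R) \cap L^\infty(\R)$ satisfies the same linear advance-delay equation, with $\psi(\pm\infty) = u_{l/r}$. (As a useful preliminary one may integrate the original equation on $[-A,A]$, exploit $\phi(y)-\phi(y-1) \to u_{r/l}$ as $y \to \pm\infty$, and let $A\to\infty$, forcing $c^2 = 1$; this is however not strictly needed for what follows.)

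Because $\psi \in L^\infty(\R)$ is a tempered distribution, its Fourier transform $\hat\psi$ satisfies in $\mathcal{S}'(\R)$
\begin{equation*}
\bigl[c^2 \xi^2 - 4\sin^2(\xi/2)\bigr]\, \hat\psi(\xi) = 0.
\end{equation*}
Set $f(\xi) := c^2 \xi^2 - 4 \sin^2(\xi/2)$. For $c\neq 0$, the term $4\sin^2(\xi/2)$ is bounded, so $f$ has only finitely many real zeros; the Taylor expansion $f(\xi) = (c^2-1)\xi^2 + \xi^4/12 + O(\xi^6)$ shows that $\xi = 0$ is a zero of order $2$ if $|c| \neq 1$ and of order $4$ if $|c| = 1$, while the other real zeros have finite multiplicity. (The degenerate case $c=0$ reduces to the discrete Laplace equation, for which bounded solutions are $1$-periodic, and the same conclusion applies.) By the classical description of distributions annihilated by a smooth function with isolated zeros, $\hat\psi$ is a finite linear combination of Dirac masses and their derivatives, supported on the real zeros of $f$. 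Inverting the Fourier transform one obtains
\begin{equation*}
\psi(x) = \sum_k P_k(x)\, e^{i\xi_k x},
\end{equation*}
where the sum is finite, $\xi_k \in \R$, and each $P_k$ is a polynomial.

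Since $\psi$ is bounded, each polynomial $P_k$ must in fact be constant, because a non-constant polynomial times a unit-modulus exponential is unbounded. Thus $\psi$ is a Bohr almost-periodic function (a real trigonometric polynomial after pairing $\pm\xi_k$). Any almost-periodic function with a limit at $+\infty$ is constant: taking an almost-period $\tau$ with $\|\psi(\cdot+\tau)-\psi\|_\infty \leq \varepsilon$ and sending $\tau \to +\infty$ gives $|\psi(x_0)-u_r| \leq 2\varepsilon$ for every $x_0$. Hence $\psi \equiv u_r = u_l$, contradicting $u_l \neq u_r$. The only delicate step is the structure theorem for $\mathcal{S}'$-solutions of $f \hat\psi = 0$ at higher-order zeros of $f$: one must perform local division by $f$ near each zero and then use the boundedness of $\psi$ to discard the contributions of $\delta^{(j)}_{\xi_k}$ with $j \geq 1$, which would otherwise produce polynomial-growth terms. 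This justification is essentially the only non-routine ingredient of the proof.
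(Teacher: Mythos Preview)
Your proof is correct and follows essentially the same route as the paper: Fourier-transform the linear advance--delay equation, observe that $\hat\psi$ (resp.\ $\hat\phi$ in the paper) is supported on the finite zero set of $c^2\xi^2-4\sin^2(\xi/2)$, invert to a finite sum of polynomial-times-exponential terms, and use the existence of limits at infinity to force the solution to be trivial. The only cosmetic differences are that you work with $\psi=\phi'$ (which is bounded, making the tempered-distribution setup slightly cleaner), you spell out the final step via almost-periodicity where the paper simply asserts it, and you treat $c=0$ by the direct periodicity argument rather than by the Rankine--Hugoniot identity of Lemma~\ref{StatWave}.
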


      \begin{proposition}\label{DiscreteSolitonsNLin}
        Suppose $W$ satisfies \eqref{Convex} and \eqref{SConvex}. Then there exist no discrete shock wave for~\eqref{Newton}, in the sense of Definition \ref{DiscShock}.
      \end{proposition}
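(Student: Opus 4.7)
My plan is to derive two integral relations by integrating the traveling-wave equation \eqref{Soliton}, rewritten as
\[
c^2 \phi''(x) = W'(f(x)) - W'(f(x-1)) \text{ with } f(x) := \phi(x+1) - \phi(x),
\]
against the multipliers $1$ and $\phi'(x)$ over $\R$, then combine them into an exact trapezoidal identity relating $W$ to $W'$ at $u_l$ and $u_r$, which will contradict the strict convexity of $W'$ forced by \eqref{SConvex}.

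First, since by Definition \ref{DiscShock} $\phi'\to u_l$ at $-\infty$ and $\phi'\to u_r$ at $+\infty$, the average $f(x)=\int_x^{x+1}\phi'(u)\,du$ tends to $u_l$ (resp.\ $u_r$) at $\mp\infty$. Integrating the rewritten equation over $[-A,A]$ and translating by $1$ in the $W'(f(x-1))$ term, the right-hand side telescopes to $\int_{A-1}^A W'(f)\,dx - \int_{-A-1}^{-A} W'(f)\,dx \to W'(u_r) - W'(u_l)$, while the left-hand side tends to $c^2(u_r - u_l)$. This yields the Rankine-Hugoniot-type speed relation $c^2(u_r-u_l)=W'(u_r)-W'(u_l)$.

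Second, I would multiply \eqref{Soliton} by $\phi'(x)$ and integrate on $[-A,A]$. The left-hand side gives $\tfrac{c^2}{2}(u_r^2-u_l^2)$ in the limit. Translating in the second term and using $\phi'(x+1)-\phi'(x)=f'(x)$, the bulk part becomes $-\int_{-A}^{A-1} W'(f)\,f'\,dx = W(f(-A)) - W(f(A-1)) \to W(u_l) - W(u_r)$, while the two residual boundary strips contribute $u_rW'(u_r)-u_lW'(u_l)$. Combining with the speed relation above and simplifying algebraically yields the trapezoidal identity
\[
W(u_r) - W(u_l) = \tfrac{1}{2}(u_r - u_l)\bigl(W'(u_r) + W'(u_l)\bigr).
\]
Now \eqref{SConvex} makes $W'$ strictly convex, so for $u_l\neq u_r$ its graph lies strictly below (or above, depending on the ordering) its chord on the segment joining $u_l$ and $u_r$; integrating yields that $\int_{u_l}^{u_r} W'(u)\,du$ is strictly less (resp.\ greater) than $\tfrac{1}{2}(u_r-u_l)(W'(u_r)+W'(u_l))$, which contradicts the identity.

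The main technical obstacle I foresee is to rigorously justify the two passages to the limit $A\to\infty$—in particular, that the boundary-strip integrals $\int_{A-1}^A W'(f)\phi'\,dx$ and $\int_{-A-1}^{-A} W'(f)\phi'(\cdot+1)\,dx$ converge to $u_rW'(u_r)$ and $u_lW'(u_l)$ respectively. This will follow from continuity of $W'$, the $\mathcal{C}^2$-regularity of $\phi$ (which forces $\phi'$ to be uniformly continuous, hence also $f$), and the fact that on unit-length strips near $\pm\infty$ both $\phi'$ and $f$ lie in an $\varepsilon$-neighbourhood of their respective limits $u_{l,r}$ coming from Definition \ref{DiscShock}; note that one does not need any integrability of $\phi''$ globally, because the bulk term is handled by the exact primitive $W\circ f$.
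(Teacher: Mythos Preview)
Your proof is correct and follows essentially the same approach as the paper's: derive the Rankine--Hugoniot relation \eqref{RH2} by integrating \eqref{Soliton}, then test \eqref{Soliton} against $\phi'$ to obtain the trapezoidal identity $W(u_r)-W(u_l)=\tfrac{1}{2}(u_r-u_l)(W'(u_r)+W'(u_l))$, and conclude by strict convexity of $W'$. Your discussion of the limit $A\to\infty$ is in fact more carefully phrased than the paper's own computation (which, incidentally, seems to carry a sign typo in the intermediate step leading to \eqref{Soliton2}; your version of the identity is the correct one).
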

      
      \begin{remark}
        It is straightforward from the proof that there does not exist any other discrete wave than the constant ones in the linear case. In the non-linear case, we do not know if there exists solitons, that is $X_j(t)$ satisfying Definition \ref{DiscShock}, with the slight modification that $u_l=u_r$.
      \end{remark}

  \section{The linear case}\label{SecLin}
    
    When the potential is quadratic,
    the corresponding wave equation \eqref{WE} is linear. Its characteristic lines do not cross, therefore, when the initial conditions are regular, shocks never occur. Furthermore, the energy is preserved: the continuous system \eqref{WE} is thus conservative, as the discrete one \eqref{Newton}. This is the reason why the discrete system naturally tends to the continuous one, and we show it with simple arguments, essentially using weak compactness of $H^1_{\tau,x}$. This extends the result of \cite{BLL}.\\

    Let us prove Theorem \ref{ThmConvLin2}. We first prove that the discrete energy is preserved:
      
      \begin{lemma}\label{ConservEnergy}
        Under the hypotheses of Theorem \ref{ThmConvLin2}, the following generalized discrete energy is preserved: 
        \begin{equation}
          \tilde{\mathcal{E}}_D^N(t):=\sum_{k=-N}^{N-1} W\lt(\frac{k}{N},U_k(t) \rt) + \frac{1}{2} \sum_{k=-N}^{N-1} \lt(V_k(t)\rt)^2.
        \end{equation} 
      \end{lemma}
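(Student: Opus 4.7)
The plan is a direct computation: differentiate $\tilde{\mathcal{E}}_D^N$ in time, plug in the equation of motion, and perform a discrete integration by parts (Abel summation) to see the two contributions cancel, modulo boundary terms that vanish thanks to the Dirichlet data.

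More concretely, using $\dot{U}_k = V_{k+1} - V_k$ and $V_k \dot{V}_k = V_k\bigl[\partial_u W(k/N, U_k) - \partial_u W((k-1)/N, U_{k-1})\bigr]$ from \eqref{Newton2}, I would write
\begin{equation*}
\frac{d}{dt}\tilde{\mathcal{E}}_D^N(t) = \sum_{k=-N}^{N-1} \partial_u W\!\left(\tfrac{k}{N}, U_k\right)(V_{k+1}-V_k) + \sum_{k=-N}^{N-1} V_k \bigl[\partial_u W(\tfrac{k}{N}, U_k)-\partial_u W(\tfrac{k-1}{N}, U_{k-1})\bigr].
\end{equation*}
In the first sum I would shift the index $k\mapsto k-1$ in the $V_{k+1}$ part, producing
\begin{equation*}
\sum_{k=-N+1}^{N} \partial_u W\!\left(\tfrac{k-1}{N}, U_{k-1}\right) V_k - \sum_{k=-N}^{N-1}\partial_u W\!\left(\tfrac{k}{N}, U_k\right) V_k,
\end{equation*}
so that after collecting the interior terms one gets exactly the opposite of the kinetic contribution, up to the boundary values $\partial_u W(\tfrac{N-1}{N},U_{N-1})\,V_N$ and $-\partial_u W(-1,U_{-N})\,V_{-N}$.

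The boundary terms vanish because \eqref{BCD} forces $X_{-N}(t)\equiv N\phi_l$ and $X_N(t)\equiv N\phi_r$ (the latter plays the role of a fixed ghost atom closing the chain), hence $V_{-N}(t)=V_N(t)=0$ for all $t$. For the same reason, the fact that Newton's equation \eqref{Newton2} is only posed on the interior range $j\in[\![-N+1,N-2]\!]$ causes no issue in the kinetic sum: the missing contributions at $k=-N$ and $k=N-1$ are multiplied by $V_k$'s whose absence is compensated by the vanishing of $V_{\pm N}$ in the Abel transformation. I do not expect any genuine obstacle; the only point requiring a bit of care is bookkeeping the index ranges of the dynamical variables versus the boundary ones, which is a purely mechanical check.
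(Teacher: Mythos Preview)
Your proposal is correct and follows essentially the same route as the paper: differentiate the energy, insert \eqref{Newton2}, perform the Abel/index-shift summation, and kill the resulting boundary terms using $V_{-N}=V_N=0$ from \eqref{BCD}. Your remark about the ghost atom $X_N$ and the index bookkeeping is exactly the small care the paper also takes (it writes the kinetic sum over $k=-N+1,\dots,N-1$, dropping the trivially vanishing $k=-N$ term, but this is cosmetic).
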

      
      \begin{proof}
        Using \eqref{Newton}, we get that:
        \begin{align*}
          \frac{d}{dt} \tilde{\mathcal{E}}_D^N(t)
          =&\sum_{k=-N}^{N-1}\lt\{ \partial_u W\lt(\frac{k}{N},U_k \rt) \lt(V_{k+1} - V_k \rt)\rt\}(t) + \sum_{k=-N+1}^{N-1} \lt\{V_k \frac{d^2}{dt^2}X_k\rt\}(t),\\
          =&\sum_{k=-N}^{N-1}\lt\{ \partial_u W\lt(\frac{k}{N},U_k \rt) \lt(V_{k+1} - V_k \rt)\rt\}(t) \\
          &+ \sum_{k=-N+1}^{N-1} \lt\{V_k  \lt( \partial_u W\lt(\frac{k}{N},U_k \rt)-\partial_u W\lt(\frac{k-1}{N},U_{k-1} \rt) \rt)\rt\}(t).
        \end{align*}
        Reorganizing the sum, we obtain:
        \begin{align*}
          \frac{d}{dt} \tilde{\mathcal{E}}_D^N(t)=\lt\{\partial_u W \lt(\frac{N-1}{N},U_{N-1} \rt)V_N -\partial_u W \lt(-1,U_{-N} \rt)V_{-N}\rt\}(t).
        \end{align*}
        Yet, as \eqref{BCD} is satisfied, we have $V_N=V_{-N}=0$. Therefore:
        \begin{equation*}
          \frac{d}{dt} \tilde{\mathcal{E}}^N_D(t)=0,
        \end{equation*}
        which implies the desired result.
      \end{proof}
      
      Next we prove that \eqref{Newton2} is consistent with \eqref{WE2}:
      
      \begin{lemma}\label{LemConsistLin}
        Under the hypotheses of Theorem \ref{ThmConvLin2}, we have the following convergences
        for all $g \in \mathcal{C}^{\infty}_c\lt( ]-\infty,T[ \times ]-1,1[\rt)$:
	\begin{align}
	  \nonumber
	  &\int_0^T \int_{-1}^1 \lt\{\partial_x g  \partial_uW\lt(x,\partial_x \phi^N \rt)- \partial_\tau g \partial_\tau \phi^N\rt\}(\tau,x) dx d\tau\\
	  & - \int_{-1}^1  g(0,x)\phi^\tau_0(x) dx \underset{N\rightarrow +\infty}{\rightarrow} 0,\label{Conv11}\\
	  &\int_0^T \int_{-1}^1 \lt\{\partial_x g\partial_\tau \phi^N -\partial_\tau g \partial_x \phi^N \rt\}(\tau,x) dx d\tau - \int_{-1}^1 g(0,x) \phi_0^x(x) dx \underset{N\rightarrow +\infty}{\rightarrow} 0.\label{Conv12}
	\end{align}
      \end{lemma}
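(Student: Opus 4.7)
Both identities are consistency statements — the wave equation holds for $\phi^N$ in the weak sense up to a vanishing error — and I would prove them by the same three-step procedure: multiply an appropriate discrete identity by $g$ sampled at the grid points $j/N$, sum by parts in $j$ and integrate by parts in $\tau$, then identify the resulting Riemann sums with the continuous integrals in \eqref{Conv11}--\eqref{Conv12}.

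For \eqref{Conv12} I would start from the elementary kinematic identity $\frac{d}{dt} U_j = V_{j+1} - V_j$, the discrete analogue of $\partial_\tau \partial_x \phi = \partial_x \partial_\tau \phi$. Multiplying by $g(\tau, j/N)$, summing over $j$ with weight $1/N$, and integrating in $\tau \in [0, T]$, an integration by parts in $\tau$ (the boundary term at $\tau = T$ vanishes by compact support of $g$) together with Abel summation in $j$ (the boundary terms at $j = \pm N$ vanish because $g$ has compact support in $]-1,1[$) yield the claimed identity up to Riemann-sum errors; the initial term becomes $\frac{1}{N}\sum_j g(0, j/N) \phi_0^x(j/N)$, which tends to $\int_{-1}^{1} g(0,x)\phi_0^x(x)\,dx$. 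For \eqref{Conv11} I would do exactly the same, starting now from \eqref{Newton2}: writing $\frac{d^2 X_j}{dt^2}(N\tau) = N^{-2}\partial_\tau^2[X_j(N\tau)]$, a single integration by parts in $\tau$ produces a $\partial_\tau g \cdot V_j$ bulk term plus an initial contribution that, since $V_j(0) = \phi_0^\tau(j/N)$, converges to $\int_{-1}^{1} g(0,x)\phi_0^\tau(x)\,dx$, while Abel summation in $j$ turns the spatial difference $\partial_u W(j/N, U_j) - \partial_u W((j-1)/N, U_{j-1})$ into $\sum_j \partial_u W(j/N, U_j)\bigl(g(\tau,(j+1)/N) - g(\tau, j/N)\bigr)$, which is the discrete counterpart of $\int \partial_x g \cdot \partial_u W(x, \partial_x \phi^N)\,dx$.

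The only remaining task — and the place where care is needed — is to show that each discrete Riemann sum approximates its continuous integral counterpart up to $o(1)$. Thanks to the smoothness of $g$, the finite-difference error $g(\tau, (j+1)/N) - g(\tau, j/N) - N^{-1}\partial_x g(\tau, j/N)$ is $O(N^{-2})$ pointwise, and similarly for time differences; after multiplying by $U_j$ or $V_j$ and summing over $j \in \[-N, N-1\]$, Cauchy--Schwarz combined with the uniform $l^2$ bound $\|U_j\|_{l^2_j} + \|V_j\|_{l^2_j} \leq C\sqrt{N}$ — furnished by the conservation of the generalised discrete energy of Lemma \ref{ConservEnergy} together with the coercivity \eqref{Ellipticitee} — controls each such error by $O(N^{-1/2})$. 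The discrepancy between the piecewise-constant value $U_j$ (resp.\ the piecewise-linear interpolant of $V_j$) and the true derivative $\partial_x \phi^N$ (resp.\ $\partial_\tau \phi^N$) on the $j$-th cell is controlled identically. I expect no genuine obstacle; the argument is pure bookkeeping, relying only on energy conservation and the smoothness of the test function $g$.
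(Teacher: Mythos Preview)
Your proposal is correct and, for \eqref{Conv11}, essentially coincides with the paper's argument: both multiply the Newton equation by the test function, transfer the spatial discrete difference onto $g$ via summation by parts (the paper phrases this as the adjointness of the operators $D_N^\pm$ acting on the interpolant $\phi^N$, you phrase it as Abel summation on the grid values --- a cosmetic difference), and control the residuals by Cauchy--Schwarz together with the $l^2$ bound coming from Lemma~\ref{ConservEnergy} and \eqref{Ellipticitee}.

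For \eqref{Conv12}, however, you take an unnecessarily roundabout route. The paper simply integrates by parts in both variables directly on the continuous interpolant: since $\phi^N$ is piecewise linear in $x$ and $\mathcal{C}^1$ in $\tau$, the mixed partials $\partial_x\partial_\tau\phi^N$ and $\partial_\tau\partial_x\phi^N$ coincide (both equal the piecewise-constant function $N(V_{k^N(x)+1}-V_{k^N(x)})$, which is exactly your kinematic identity $\frac{d}{dt}U_j=V_{j+1}-V_j$), so the two bulk terms cancel \emph{exactly}, leaving only the initial trace $\int_{-1}^1 g(0,x)\,\partial_x\phi^N(0,x)\,dx \to \int_{-1}^1 g(0,x)\,\phi_0^x(x)\,dx$. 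No Riemann-sum estimation or energy bound is needed for this half. Your approach recovers the same identity after more bookkeeping; the paper's observation that \eqref{Conv12} is an exact identity for $\phi^N$ (up to the initial term) is the cleaner path.
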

      
      \begin{proof}
        It is easy to prove \eqref{Conv12} by an integration by parts:
        \begin{align*}
          &\int_0^T \int_{-1}^1 \lt\{\partial_x g\partial_\tau \phi^N -\partial_\tau g \partial_x \phi^N \rt\}(\tau,x) dx d\tau\\
          &=-\int_0^T \int_{-1}^1 \lt\{g \partial_{x} \partial_\tau \phi^N - g \partial_x \partial_\tau \phi^N \rt\} (\tau,x) dx d\tau + \int_{-1}^1 g(0,x) \partial_x\phi^N(0,x) dx\\
          & \underset{N\rightarrow +\infty}{\rightarrow} \int_{-1}^1 g(0,x) \phi^x_0(x) dx.
        \end{align*}
        Before proving \eqref{Conv11}, let us introduce the operators:
        \begin{align*}
          D_N^- : f(x) \mapsto N\lt( f(x) - f\lt(x-\frac{1}{N}\rt) \rt) && D_N^+ : f(x) \mapsto N\lt( f(x) - f\lt(x+\frac{1}{N}\rt) \rt),
        \end{align*}
        which are adjoint of each other.\\
        We integrate by parts:
        \begin{align}
          \nonumber
          &\int_0^T \int_{-1}^1 \lt\{\partial_x g \partial_u W \lt(x,\partial_x \phi^N\rt)- \partial_\tau g \partial_\tau \phi^N\rt\}(\tau,x) dx d\tau\\
          \nonumber
          &=\int_0^T \int_{-1}^1 \lt\{ \partial_x g  \partial_u W \lt( x, \partial_x \phi^N\rt)+  g \partial^2_\tau \phi^N\rt\}(\tau,x) dx d\tau \\
          &~~~+ \int_{-1}^1 g(0,x) \partial_\tau \phi^N(0,x) dx.
	  \label{ConvLin11}
        \end{align}
        It is clear that:
        \begin{equation}\label{ConvLin12}
          \int_{-1}^1 g(0,x) \partial_\tau \phi^N(0,x) dx \underset{N \rightarrow + \infty}{\rightarrow} \int_{-1}^1 g(0,x) \phi^\tau_0(x) dx.
        \end{equation}
        We focus on the other integrals. By definition, if $x \in [-1+1/N,1-1/N]$:
        \begin{align*}
           \partial_\tau^2 \phi^N(\tau,x)
	    &=N \frac{d^2}{dt^2} \lt( \lt(1-\theta^N(x)\rt) X_{k^N(x)} + \theta^N(x) X_{k^N(x)+1}\rt)(t) \\
	    &=\lt(1-\theta^N(x)\rt)D_N^-\lt\{ A\lt(\frac{k^N(x)}{N} \rt) U_{k^N(x)}(t)
	    +B\lt( \frac{k^N(x)}{N} \rt)\rt\}\\
	    &+ \theta^N(x) D_N^-\lt\{A \lt( \frac{k^N(x) +1}{N} \rt)  U_{k^N(x)+1}(t) + B \lt(\frac{k^N(x)+1 }{N}\rt) \rt\}\\
	    &=\lt(1-\theta^N(x)\rt)D_N^-\lt\{ A\lt(\frac{k^N(x)}{N} \rt) \partial_x \phi^N(\tau,x)
	    +B\lt( \frac{k^N(x)}{N} \rt)\rt\}\\
	    &+ \theta^N(x) D_N^-\lt\{A \lt( \frac{k^N(x) +1}{N} \rt)  \partial_x \phi^N(\tau,x+1/N) + B \lt(\frac{k^N(x)+1}{N}\rt) \rt\}.
        \end{align*}
        Remark that $D_N^{\pm}$ and $\theta^N$ commute, in the sense that: 
        \begin{equation*}
          D_N^{\pm}\lt\{\theta^N(x)f(x)\rt\}=\theta^N(x)D_N^{\pm}\lt\{f(x)\rt\}.
        \end{equation*}
        Hence:
        \begin{align*}
           \partial_\tau^2 \phi^N(\tau,x)
	    &=D_N^-\Bigg\{
	    \lt(1-\theta^N(x)\rt)\lt( A\lt(\frac{k^N(x)}{N} \rt) \partial_x \phi^N(\tau,x)
	    +B\lt( \frac{k^N(x)}{N} \rt)\rt)\\
	    &~~~+ \theta^N(x) \lt(A \lt( \frac{k^N(x) +1}{N} \rt)  \partial_x \phi^N(\tau,x+1/N) + B \lt(\frac{k^N(x)+1}{N}\rt) \rt) 
	    \Bigg\},
	\end{align*}
	if $|x|<1-1/N$. We assume that $N$ is sufficiently large, so that $\Supp(g) \subset [-1+2/N,1-2/N]$. Then:
        \begin{align*}
	    &\int_0^T \int_{-1}^1 \partial_\tau^2 \phi^N(\tau,x) g(\tau,x) dx d\tau \\
	    &=\int_0^T \int_{-1}^1 g(\tau,x) 
	    D_N^-\Bigg\{
	    \lt(1-\theta^N(x)\rt)\lt( A\lt(\frac{k^N(x)}{N} \rt) \partial_x \phi^N(\tau,x)
	    +B\lt( \frac{k^N(x)}{N} \rt)\rt)\\
	    &~~~+ \theta^N(x) \lt(A \lt( \frac{k^N(x) +1}{N} \rt)  \partial_x \phi^N(\tau,x+1/N) + B \lt(\frac{k^N(x)+1}{N}\rt) \rt) 
	    \Bigg\}
	    dx d\tau.
	\end{align*}
	As $D_N^-$ and $D_N^+$ are adjoint of each other:
	\begin{align*}
	   &\int_0^T \int_{-1}^1 \partial_\tau^2 \phi^N(\tau,x) g(\tau,x) dx d\tau\\
	   &=\int_0^T \int_{-1}^1 D_N^+\lt\{g(\tau,x)\rt\} 
	   \Bigg\{
	   \lt(1-\theta^N(x)\rt)\lt( A\lt(\frac{k^N(x)}{N} \rt) \partial_x \phi^N(\tau,x)
	   +B\lt( \frac{k^N(x)}{N} \rt)\rt)\\
	   &~~~+ \theta^N(x) \lt(A \lt( \frac{k^N(x+1/N) }{N} \rt)  \partial_x \phi^N(\tau,x+1/N) + B \lt(\frac{k^N(x+1/N)}{N}\rt) \rt) 
	   \Bigg\}
	   dx d\tau\\
	   &=\int_0^T \int_{-1}^1 \lt[ \lt(1-\theta^N(x) \rt) D_N^+\lt\{g(\tau,x)\rt\}+\theta^N(x) D_N^+ \lt\{g(\tau,x-1/N)\rt\} \rt]\\
	   &~~~\lt( A\lt( \frac{k^N(x)}{N} \rt)\partial_x \phi^N(\tau,x) + B\lt(\frac{k^N(x)}{N}\rt) \rt) dx d\tau.
	\end{align*}
	Now, since $A$, $B$ and $g$ are regular:
	\begin{align*}
	  &D_N^+\lt\{g(\tau,x)\rt\}=-\partial_x g(\tau,x)+ \frac{h_g^N(\tau,x)}{N},\\
	  &A\lt( \frac{k^N(x)}{N} \rt)=A(x)+\frac{h_A^N(x)}{N},\\
	  &B\lt(\frac{k^N(x)}{N}\rt)=B(x)+\frac{h_B^N(x)}{N},
	\end{align*}
	where:
	\begin{equation*}
	  \sup_{N} \sup_{\tau,x}  \lt\{ \lt| h_g^N(\tau,x) \rt|+\lt| h_A^N(\tau,x) \rt|+\lt| h_B^N(\tau,x) \rt| \rt\} < +\infty.
	\end{equation*}
	Therefore:
	\begin{align}
	  \nonumber
	  &\int_0^T \int_{-1}^1 \partial_\tau^2 \phi^N(\tau,x) g(\tau,x) dx d\tau\\
	  &=-\int_0^T \int_{-1}^1 \partial_x g(\tau,x) \lt(A(x) \partial_x \phi^N(\tau,x)+B(x) \rt) dx d\tau + C^N,
	  \label{LinDemo1}
	\end{align}
	where, by the Cauchy-Schwarz inequality:
	\begin{equation}\label{LinDemo2}
	  C^N \leq \frac{C}{\sqrt{N}} \lt(1 + \sqrt{\int_{0}^T \int_{-1}^1 \lt(\partial_x \phi^N(\tau,x)\rt)^2 dx d\tau } \rt).
	\end{equation}
	Using Lemma \ref{ConservEnergy} and \eqref{Ellipticitee}, we get that:
	\begin{equation}
	  \int_{-1}^1 \lt(\partial_x \phi^N(\tau,x)\rt)^2 dx \leq C. \label{LinDemo3}
	\end{equation}
	Whence, from \eqref{LinDemo1}, \eqref{LinDemo2} and \eqref{LinDemo3} we deduce:
	\begin{equation}\label{LinDemo4}
	  \lt|\int_0^T \int_{-1}^1 \lt\{\partial_\tau^2 \phi^N g + \partial_x g \partial_u \lt(W(x,\partial_x \phi^N \rt)  \rt\}(\tau,x) dx d\tau \rt| \underset{N \rightarrow +\infty}{\rightarrow} 0.
	\end{equation} 
	From  \eqref{ConvLin11}, \eqref{ConvLin12} and \eqref{LinDemo4}, we obtain \eqref{Conv11}.
      \end{proof}
      
      We are now able to prove Theorem \ref{ThmConvLin2}.
      
      \begin{proof}
	Using Lemma \ref{ConservEnergy} and \eqref{Ellipticitee}, we estimate:
	\begin{align*}
	  \sum_{k=-N}^{N-1} \lt\{ A\lt(\frac{k}{N} \rt)U_k^2 + V_k^2 \rt\}(t)
	  \overset{\eqref{Ellipticitee}}{\leq}& C + C\sum_{k=-N}^{N-1} \lt\{ A\lt(\frac{k}{N} \rt)\lt(U_k\rt)^2+B\lt(\frac{k}{N}\rt) U_k +\lt(V_k\rt)^2 \rt\}(t)   \\
	  \leq &C\lt(1+ \tilde{\mathcal{E}}_D^N(t)\rt)\\
	  \leq&C \lt(1+ \tilde{\mathcal{E}}^N_D(0) \rt).
	\end{align*}
	Since $\phi_0^x$ and $\phi_0^\tau$ are sufficiently regular, we have:
	\begin{equation*}
	  \mathcal{E}_D(0) \underset{N\rightarrow\infty}{\rightarrow} \int_{-1}^1 W\lt(x,\phi^x_0(x)\rt) dx + \frac{1}{2} \int_{-1}^1 \lt(\phi^\tau_0(x)\rt)^2dx.
	\end{equation*}
	We therefore obtain that, for all $\tau>0$:
	\begin{equation*}
	  \int_{-1}^1 \lt\{A\lt(\frac{k^N(x)}{N} \rt) \lt(\partial_x \phi^N(\tau,x)\rt)^2+ \lt(\partial_\tau \phi^N(\tau,x) \rt)^2  \rt\}dx \leq C.
	\end{equation*}
	And by smoothness of $A$ and thanks to the fact that $A\geq \alpha>0$, we obtain:
	\begin{equation}\label{Produitscal}
	  \int_{-1}^1 \lt\{A(x) \lt(\partial_x \phi^N(\tau,x)\rt)^2+ \lt(\partial_\tau \phi^N(\tau,x) \rt)^2  \rt\}dx \leq C.
	\end{equation}
	We take the following scalar product on $\dot{H}^1_{\tau,x}$:
	\begin{equation*}
	  \lt<g,h\rt>_{\tilde{\dot{H}}^1_{\tau,x}}:= \int_0^T \int_{-1}^1 \lt\{A(x) \partial_x g\partial_x h+\partial_\tau g \partial_\tau h \rt\}(\tau,x) dx d\tau.
	\end{equation*}
	This scalar product induces a norm which is equivalent to the classical one on $\dot{H}^1_{\tau,x}$, as $A\geq \alpha>0$ and $A$ is bounded. We denote $\tilde{\dot{H}}^1_{\tau,x}$ for $\dot{H}^1_{\tau,x}$ endowed with this scalar product, respectively $\tilde{H}^1_{\tau,x}$ for $H^1_{\tau,x}$ endowed with the scalar product:
	\begin{equation*}
	  \lt<g,h\rt>_{\tilde{H}^1_{\tau,x}}=\lt<g,h\rt>_{\tilde{\dot{H}}^1_{\tau,x}} + \int_0^T \int_{-1}^1 g(\tau,x)h(\tau,x) dx d\tau.
	\end{equation*}
	From \eqref{Produitscal} and the fact that $\phi^N(-1)=\phi_l$, we get by the Poincaré inequality that $\phi^N$ is bounded in $\tilde{H}^1_{\tau,x}$. By weak compactness of this space, we extract:
	\begin{equation}
	  \phi^N \rightharpoonup \phi_\infty \text{ in } \tilde{H}^1_{\tau,x}.
	\end{equation} 
	Lemma \ref{LemConsistLin} implies that, for all $g \in \mathcal{C}^\infty_c([0,T[,]-1,1[)$:
	\begin{align*}
	  &\int_0^T \int_{-1}^1 \lt\{\partial_x g  \partial_uW\lt(x,\partial_x \phi^\infty \rt)- \partial_\tau g \partial_\tau \phi_\infty\rt\}(\tau,x) dx d\tau = \int_{-1}^1  g(0,x)\phi^\tau_0(x) dx,\\
	  &\int_0^T \int_{-1}^1 \lt\{\partial_x g\partial_\tau \phi_\infty -\partial_\tau g \partial_x \phi_\infty \rt\}(\tau,x) dx d\tau =\int_{-1}^1 g(0,x) \phi_0^x(x) dx.
	\end{align*}
	Therefore, $\phi_\infty$ is $\phi$, the unique solution of \eqref{WE} for the initial and boundary conditions \eqref{IDC} and \eqref{BCC} given by Theorem \ref{ExistUneLin}.\\
	We now prove that this convergence is strong. As $\tilde{\mathcal{E}}^N_D$ is preserved, we have:
	\begin{align*}
	  \lt\|\phi^N\rt\|_{\tilde{\dot{H}}^1_{\tau,x}}^2
	  =&2 T\lt(\tilde{\mathcal{E}}^N_D(0)\rt)^2 + 2\int_0^T \int_{-1}^1\Bigg\{ \lt(A\lt(\frac{k^N(x)}{N} \rt)-A(x)\rt) \lt(\partial_x \phi^N \rt)^2 \\
	  &- B\lt(\frac{k^N(x)}{N} \rt)\partial_x \phi^N \Bigg\}(\tau,x) dx
	\end{align*}
	Yet, as $A$ and $B$ are $\mathcal{C}^1$, we have:
	\begin{align*}
	  &\lim_{N\rightarrow + \infty} \sup_{x \in [-1,1]}\lt|A\lt(\frac{k^N(x)}{N} \rt)-A(x)\rt|=0,\\
	  &B\lt(\frac{k^N(x)}{N} \rt) \rightarrow B(x) \text{ in } L^2_x.
	\end{align*}
	Moreover:
	\begin{equation*}
	  \tilde{\mathcal{E}}_D^N(0) \underset{N\rightarrow\infty}{\rightarrow} \int_{-1}^1 \lt\{\frac{A(x)}{2} \lt(\phi^x_0(x)\rt)^2+B(x) \phi^x_0(x)+\frac{1}{2}\lt(\phi^\tau_0(x) \rt)^2 \rt\}dx.
	\end{equation*}
	Therefore, we have the following convergence:
	\begin{align}
	  \nonumber
	  \lt\|\phi^N\rt\|_{\tilde{\dot{H}}^1_{\tau,x}}^2
	  \underset{N\rightarrow\infty}{\rightarrow}&2 T\int_{-1}^1 \lt\{\frac{A(x)}{2} \lt(\phi^x_0(x)\rt)^2+B(x) \phi^x_0(x)+\frac{1}{2}\lt(\phi^\tau_0(x) \rt)^2 \rt\}dx\\
	  &- 2\int_0^T \int_{-1}^1\Bigg\{B\lt(x\rt)\partial_x \phi \Bigg\}(\tau,x) dx.
	  \label{ConvLinPreuve1}
	\end{align}
	But, from Theorem \ref{ExistUneLin}, the continuous energy $\mathcal{E}_C$ is also preserved. This implies:
	\begin{align}
	  \nonumber
	  &2 T\int_{-1}^1 \lt\{\frac{A(x)}{2} \lt(\phi^x_0(x)\rt)^2+B(x) \phi^x_0(x)+\frac{1}{2}\lt(\phi^\tau_0(x) \rt)^2 \rt\}dx\\
	  \nonumber
	  &- 2\int_0^T \int_{-1}^1\Bigg\{B\lt(x \rt)\partial_x \phi \Bigg\}(\tau,x) dx\\
	  &=\int_0^T \int_{-1}^1 \lt\{\frac{A(x)}{2} \lt(\partial_x \phi\rt)^2+\frac{1}{2}\lt(\partial_\tau \phi\rt)^2 \rt\}(\tau,x)dx.
	  \label{ConvLinPreuve2}
	\end{align}
	From \eqref{ConvLinPreuve1} and \eqref{ConvLinPreuve2}, we obtain:
	\begin{equation*}
	  \lt\|\phi^N\rt\|_{\tilde{\dot{H}}^1_{\tau,x}} \underset{N \rightarrow \infty}{\rightarrow} \lt\|\phi\rt\|_{\tilde{\dot{H}}^1_{\tau,x}}.
	\end{equation*}
	Whence $\phi^N$ strongly converges to $\phi$ in $\tilde{H}^1_{\tau,x}$ (and as a consequence, in $H^1_{\tau,x}$).
      \end{proof}

\section{The non-linear case}\label{SecNLin}
    
     This section is devoted to the proof of Theorem \ref{ThmcvNLin}.\\
     
     \begin{remark}[Boundedness of $\partial_x \phi$]
       Under the hypotheses of Theorem \ref{ThmcvNLin}, if $\partial_x \phi$ does not belong to $[a,b]$ on a non-zero measure set, then $\partial_x \phi^N$ cannot converge weakly to $\partial_x \phi$. Therefore, we henceforth assume that $\partial_x \phi(\tau,x) \in [a,b]$, $\forall x \in [-1,1], \forall \tau < T_0$.  The latter assumption holds for some $a$ and $b$ related to the initial conditions $\phi_x^0$, $\phi_\tau^0 \in \mathcal{C}^1_{p,x}$.
     \end{remark}

    \subsection{Light cone}
     
      The system \eqref{Newton} has the property that perturbations propagate at a finite speed on the macroscopic level. This is stated in Theorem \ref{ThLum}, but before proving it, we have to derive a Grönwall-type estimate:
      
      \begin{proposition}\label{proplightcone}
        Under the hypotheses of Theorem \ref{ThLum},
        if, for fixed $N$:
        \begin{equation}
          M= \lt\|U_0(.) - \tilde{U}_0(.) \rt\|_{L_t^\infty} < +\infty, \label{HypCone2}
        \end{equation} 
        then
        we have, for all $j \in \[1,N-1\]$, $t \in \R_+$:
        \begin{align}
	  \label{ConcluCone1}
          &\lt|U_j(t)-\tilde{U}_j(t) \rt| \leq M \frac{\lt(2t\sqrt{K}\rt)^{2j}}{(2j)!} \exp\lt(2t \sqrt{K} \rt),\\
          \label{ConcluCone2}
          &\lt|V_j(t)-\tilde{V}_j(t) \rt| \leq M\sqrt{2K} \lt[ \frac{\lt(2t\sqrt{K}\rt)^{2j+1}}{(2j+1)!} \exp\lt(2t \sqrt{K} \rt) + \frac{\lt(2t\sqrt{K}\rt)^{2j-1}}{(2j-1)!} \exp\lt(2t \sqrt{K} \rt) \rt].
        \end{align} 
      \end{proposition}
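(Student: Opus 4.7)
The approach is induction on $j$. Set $\delta U_j := U_j^N - \tilde U_j^N$, $\delta V_j := V_j^N - \tilde V_j^N$, and $\bar U_j(t) := \sup_{0 \leq s \leq t}|\delta U_j(s)|$ (and similarly $\bar V_j$). The hypotheses guarantee $\delta U_j(0) = \delta V_j(0) = 0$ for all $j \in \{1, \ldots, N-1\}$, so integrating the evolution equations $\delta V_j' = [W'(U_j) - W'(\tilde U_j)] - [W'(U_{j-1}) - W'(\tilde U_{j-1})]$ and $\delta U_j' = \delta V_{j+1} - \delta V_j$, and using the Lipschitz bound $|W'(u) - W'(v)| \leq K|u-v|$ valid on $(u_1, u_2)$ by \eqref{HypCone1}, yields
\begin{equation*}
\bar V_j(t) \leq K \int_0^t [\bar U_j(s) + \bar U_{j-1}(s)]\,ds, \qquad \bar U_j(t) \leq \int_0^t [\bar V_{j+1}(s) + \bar V_j(s)]\,ds.
\end{equation*}
The base case $j = 0$ is the hypothesis $\bar U_0(t) \leq M$ from \eqref{HypCone2}.

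\textbf{Inductive machinery.} Writing $L := \sqrt K$, the proof rests on the elementary estimate
\begin{equation*}
\int_0^t \frac{(2Ls)^n}{n!}\,e^{2Ls}\,ds \leq \frac{(2Lt)^{n+1}}{(n+1)! \cdot 2L}\,e^{2Lt},
\end{equation*}
obtained by pulling $e^{2Ls} \leq e^{2Lt}$ out of the integral. Feeding the inductive ansatz $\bar U_k \leq M \frac{(2Lt)^{2k}}{(2k)!}\,e^{2Lt}$ into the first integral inequality at levels $k = j-1, j$ and applying this estimate verifies \eqref{ConcluCone2} with slack, since the resulting prefactor $K/(2L) = L/2$ is bounded by $\sqrt{2K}$. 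Two successive applications of the estimate (once for $\bar V \to \bar U$ and once for $\bar U \to \bar V$) advance the polynomial index by $2$, exactly matching the $(2Lt)^{2j}/(2j)!$ factor in \eqref{ConcluCone1}. The constant $\sqrt{2K}$ appearing in \eqref{ConcluCone2} will emerge from the Gronwall resolvent $\sqrt{2K}\,\sinh(\sqrt{2K}\,\cdot\,)$ associated with the twice-integrated self-coupling kernel $2K(t-r)$ that appears after substitution.

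\textbf{Main obstacle.} The chief technical difficulty is the forward coupling $\bar U_j \to \bar V_{j+1}$ in the second integral inequality, which naively breaks a direct induction on $j$ alone. I would handle this by substituting the first integral inequality (at indices $j$ and $j+1$) into the second to obtain the single Volterra-type inequality
\begin{equation*}
\bar U_j(t) \leq K \int_0^t (t-r)\,\bigl[\bar U_{j+1}(r) + 2 \bar U_j(r) + \bar U_{j-1}(r)\bigr]\,dr,
\end{equation*}
then applying Gronwall's lemma with kernel $2K(t-r)$ to absorb the $\bar U_j$ self-term into the exponential factor $e^{2Lt}$ in the ansatz. The forward term $\bar U_{j+1}$ is then handled by the finite-chain structure: the boundary identity $\bar V_N = 0$ terminates the chain, so the system of inequalities across $j \in \{1, \ldots, N-1\}$ closes as a self-consistent fixed point, whose existence is guaranteed by the a priori uniform bound $\bar U_k \leq u_2 - u_1 < \infty$ supplied by \eqref{HypCone1}. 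Propagating downward from $j = N$ (or equivalently, verifying the ansatz at all levels simultaneously), one recovers \eqref{ConcluCone1}, and plugging back into the first integral inequality yields \eqref{ConcluCone2}.
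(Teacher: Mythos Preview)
Your setup is correct up to the coupled integral inequalities, and you have correctly identified the forward coupling $\bar U_j \to \bar V_{j+1} \to \bar U_{j+1}$ as the real issue. However, the resolution you sketch does not close. After absorbing the self-term by Gr\"onwall, you are left with an inequality of the form
\[
\bar U_j(t) \ \leq\ (\text{resolvent}) * \Bigl( K \int_0^{\,\cdot} (\,\cdot\,-r)\bigl[\bar U_{j+1}(r) + \bar U_{j-1}(r)\bigr]\,dr \Bigr)(t),
\]
which still contains $\bar U_{j+1}$. Your two proposed exits both fail. ``Propagating downward from $j=N$'' expresses $\bar U_{N-1}$ in terms of $\bar U_{N-2}$, then $\bar U_{N-2}$ in terms of $\bar U_{N-3}$, and so on; but each step costs a Gr\"onwall exponential, so by the time you reach index $j$ you have picked up $e^{C(N-j)t}$, not $e^{2\sqrt{K}t}$. ``Verifying the ansatz at all levels simultaneously'' would require the ansatz to be a supersolution of the full coupled system, and it is not: plugging $\bar U_{j+1} \leq M\frac{(2Lt)^{2j+2}}{(2j+2)!}e^{2Lt}$ into the right-hand side produces, after the double time-integral, a term of order $(2Lt)^{2j+4}/(2j+4)!$, which dominates $(2Lt)^{2j}/(2j)!$ for large $t$. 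So the substitution does not reproduce the claimed bound, and the ``self-consistent fixed point'' argument is not a proof as written.

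The device you are missing is a one-line trick that removes the forward coupling entirely. Set
\[
S_j(t) \ :=\ \max_{k \in [\![j,N-1]\!]} \bigl|U_k(t)-\tilde U_k(t)\bigr|.
\]
Since $S_j$ is nonincreasing in $j$, all three terms $|\delta U_{j+1}|,\,|\delta U_j|,\,|\delta U_{j-1}|$ on the right of your Volterra inequality are bounded by $S_{j-1}$, and the same holds with $j$ replaced by any $k\geq j$. Taking the maximum over $k\geq j$ therefore yields the purely backward recursion
\[
S_j(t)\ \leq\ 4K \int_0^t\!\!\int_0^s S_{j-1}(r)\,dr\,ds.
\]
A single Gr\"onwall at $j=0$ (where $S_0(0)\leq M$ and $S_0(t)\leq M + 4K\iint S_0$) gives $S_0(t)\leq M e^{2\sqrt K\,t}$, and then $j$ iterations of the recursion produce exactly $M\frac{(2\sqrt K\,t)^{2j}}{(2j)!}e^{2\sqrt K\,t}$, which is \eqref{ConcluCone1}. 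The bound \eqref{ConcluCone2} then follows by one more time-integration of the equation for $\delta V_j$, precisely as in your ``inductive machinery'' paragraph.
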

	
      \begin{proof}
	Remark first that it is straightforward to get \eqref{ConcluCone2} from \eqref{ConcluCone1} (\eqref{ConcluCone1} also holds for $j=0$) by integrating \eqref{Newton}. Indeed, using \eqref{Newton}, we have, for $j \in \[1,N-2\]$:
	\begin{align*}
	  \lt|V_j(t)-\tilde{V}_j(t) \rt| 
	  &\leq \int_0^t \lt| W'\lt(U_j(s)\rt)-W'\lt(\tilde{U}_{j}(s)\rt)+ W'\lt(\tilde{U}_{j-1}(s)\rt)-W'\lt(U_{j-1}(s)\rt) \rt| ds\\
	  &\overset{\eqref{HypCone1}}{\leq} K \int_0^t \lt\{ \lt|\tilde{U}_j(s)-U_j(s) \rt| + \lt|\tilde{U}_{j-1}(s)-U_{j-1}(s) \rt| \rt\} ds\\
	  &\overset{\eqref{ConcluCone1}}{\leq} 2MK \lt[\frac{t\lt(2t\sqrt{K}\rt)^{2j}}{(2j+1)!} \exp\lt(2t \sqrt{K}\rt)+\frac{t\lt(2t\sqrt{K}\rt)^{2(j-1)}}{(2j-1)!} \exp\lt(2t \sqrt{K} \rt) \rt].
	\end{align*}
	We now prove the estimate \eqref{ConcluCone1}. We do it by induction on $j$ in the expression:
	\begin{equation*}
	  S_j(t):=\max_{k \in \[j,N-1\]} \lt|U_k(t)-\tilde{U}_k(t)\rt|.
	\end{equation*}
	Using \eqref{Newton}, we get, for $j\geq 1$:
	\begin{align*}
	  \lt|U_j(t)-\tilde{U}_j(t) \rt| 
	  \leq& \int_0^t \lt|V_{j+1}(s)-\tilde{V}_{j+1}(s)+\tilde{V}_j(s)-V_j(s) \rt|ds\\
	  \leq& K\int_0^t \int_0^s \Big\{\lt|U_{j+1} - \tilde{U}_{j+1} \rt|+2 \lt|U_j-\tilde{U}_j \rt|\\
	  &+ \lt|U_{j-1}-\tilde{U}_{j-1}\rt| \Big\}(r)dr ds,\\
	  \leq & 4K \int_0^t \int_0^s S_{j-1}(r) dr ds.
	\end{align*}
	Thus:
	\begin{equation}
	  \label{Sjp1}
	  S_{j}(t) \leq 4K \int_{0}^t \int_0^s S_{j-1}(r) dr ds.
	\end{equation}
	From hypothesis \eqref{HypCone1}, we have that $S_0(0) \leq M$.  Using the same argument as above, we get for $j=0$:
	\begin{align*}
	  S_0(t) \leq M+ 4 K \int_0^t \int_0^s S_0(r)dr ds.
	\end{align*}
	Using the Grönwall Lemma, we obtain:
	\begin{equation}\label{Gron}
	  S_0(t) \leq M \exp\lt( 2t\sqrt{K}  \rt).
	\end{equation}
	Therefore, we obtain from \eqref{Sjp1} that:
	\begin{align*}
	  S_j(t) \leq & (4K)^{j} \int_0^{t=t_j} \int_0^{s_j} \int_0^{t_{j-1}} \int_0^{s_{j-1}} ... \int_0^{t_1} \int_0^{s_1} S_0(r)dr ds_1 dt_1... ds_{j-1} dt_{j-1} ds_j dt_j\\
	  \leq & (4K)^j \int_0^t \frac{(t-r)^{2j-1}}{(2j-1)!} S_0(r) dr\\
	  \leq & M \lt( 4K \rt)^j \frac{t^{2j}}{(2j)!} \exp\lt(2t \sqrt{K} \rt),
	\end{align*}
	which concludes the proof.
      \end{proof}

      We are now able to prove Theorem \ref{ThLum}:
      
      \begin{proof}
	Let $j >Nx$, $t \in [0,N\tau[$.
	By strong convexity of $W$ and by the Cauchy-Schwarz inequality, \eqref{DEC} implies:
	\begin{equation*}
	  M_N:= \sup_{t \in \R_+} \lt|U^N_0(t)\rt| \leq C\sqrt{N}.
	\end{equation*}
        By Proposition \ref{proplightcone}, we have:
        \begin{equation*}
          \lt|U_j(t)-\tilde{U}_j(t) \rt| \leq M_N \frac{\lt(2t\sqrt{K}\rt)^{2j}}{(2j)!} \exp\lt(2t \sqrt{K} \rt).
        \end{equation*}
        Using logarithm and Stirling formula, we obtain:
        \begin{align*}
	    \ln\lt(N\lt|U_j(t)-\tilde{U}_j(t) \rt|\rt) 
	    = & \ln(M_N) + 2j \ln\lt(2t\sqrt{K}\rt)-\ln((2j)!) +2t\sqrt{K}+\ln(N)\\
	    \leq & C(1+\ln(N))+ 2j\ln\lt( \frac{2et\sqrt{K}}{2j} \rt)+2t\sqrt{K},\\
	    \leq& C(1+\ln(N)) + 2j \ln\lt( \frac{\exp(2) t \sqrt{K}}{j} \rt)+(2t\sqrt{K}-2j)\\
	    \leq& C(1+\ln(N)) + 2j \ln \lt( \frac{cN\tau}{Nx} \rt)+2\lt(\frac{c}{\exp(2)}N\tau-Nx\rt)\\
	    \leq & C(1+\ln(N)) + 2N \lt( x\ln\lt(\frac{c\tau}{x} \rt)+\frac{c}{\exp(2)} \tau-x \rt)
	    .
        \end{align*}
        As $c\tau <x$, we obtain that:
        \begin{equation*}
          \lim_{N\rightarrow + \infty} \sup_{0\leq t < N\tau} \max_{j > Nx} \ln\lt(N\lt| U_j^N(t) - \tilde{U}_j^N(t) \rt|\rt) =-\infty,
        \end{equation*}
        which gives \eqref{ThLUmU}. The same method applies for \eqref{ThLumV}.
      \end{proof}

    \subsection{Strengthened convergence}
    
      In this section, we prove that if $\phi^N$ converges weakly to $\phi$ in $H^1_{\tau,x}$, then convergence holds in a strong sense for $\partial_x \phi^N$ to $\partial_x \phi$ in $L^2_{\tau,x}$.
    
      \begin{lemma}\label{LemRenforce}
	Under the hypotheses of Theorem \ref{ThmcvNLin}, if $T$ is $DC$-compatible, then the following implication is true:
        \begin{align*}
          \phi^{N} \rightharpoonup \phi \text{ in } H^1_{\tau,x}
          &&
          \Longrightarrow
          &&
          \partial_x \phi^{N} \rightarrow \partial_x \phi \text{ in } L^2_{\tau,x}.
        \end{align*}
      \end{lemma}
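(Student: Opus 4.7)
The plan is to upgrade the weak convergence $\phi^N \rightharpoonup \phi$ in $H^1_{\tau,x}$ to strong convergence of $\partial_x \phi^N$ in $L^2_{\tau,x}$ by first identifying the weak-$*$ limit of the discrete flux $W'(\partial_x \phi^N)$ with $W'(\partial_x \phi)$, and then exploiting the strict convexity of $W'$ through a Young measure / Jensen argument.

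The first stage is a nonlinear analogue of Lemma~\ref{LemConsistLin}. Hypothesis \eqref{BorneLinfty} gives $\partial_x \phi^N(\tau,x) \in [a,b]$ uniformly, so $W'(\partial_x \phi^N)$ is uniformly bounded in $L^\infty_{\tau,x}$; up to a subsequence, $W'(\partial_x \phi^N) \rightharpoonup^{*} \chi$. Repeating the telescopic computation of Lemma~\ref{LemConsistLin} --- now writing $\partial_\tau^2 \phi^N$ as $D_N^-$ applied to the piecewise-affine interpolation of $W'(U^N_j)$ via \eqref{Newton}, and controlling the remainders thanks to the uniform Lipschitz bound of $W'$ on $[a,b]$ and the $\mathcal{C}^3$ regularity of $W$ --- I obtain, for every $g \in \mathcal{C}^\infty_c(]-\infty,T[\times ]-1,1[)$,
\begin{equation*}
  \int_0^T\!\!\int_{-1}^1 \bigl\{ \partial_x g \, W'(\partial_x \phi^N) - \partial_\tau g \, \partial_\tau \phi^N \bigr\}\,dx\,d\tau - \int_{-1}^1 g(0,x)\phi_0^\tau(x)\,dx \underset{N\to\infty}{\longrightarrow} 0.
\end{equation*}
Passing to the limit using $\partial_\tau \phi^N \rightharpoonup \partial_\tau \phi$ and $W'(\partial_x \phi^N) \rightharpoonup^{*} \chi$, and subtracting \eqref{weakf} for the entropy solution $\phi$, one gets $\partial_x\bigl(\chi - W'(\partial_x \phi)\bigr) = 0$ in $\mathcal{D}'(]0,T[\times ]-1,1[)$. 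Hence $\chi(\tau,x) = W'(\partial_x \phi(\tau,x)) + h(\tau)$ for some $h \in L^\infty(0,T)$. The $D$-compatibility provides $\partial_x \phi^N \to \phi_0^x(-1)$ uniformly on $[0,T]\times[-1,-1+\delta]$, so $\chi \equiv W'(\phi_0^x(-1))$ on that strip; the $C$-compatibility gives $\partial_x \phi \equiv \phi_0^x(-1)$ there. Therefore $h \equiv 0$ and $\chi = W'(\partial_x \phi)$ a.e.

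The second stage is a Young measure / Jensen argument. Since $\partial_x \phi^N$ is uniformly bounded in $L^\infty_{\tau,x}$, up to a subsequence it generates a Young measure $\{\nu_{\tau,x}\}$ supported in $[a,b]$, so that $F(\partial_x \phi^N) \rightharpoonup^{*} \int F(u)\,d\nu_{\tau,x}(u)$ for every continuous $F$. Applying this to $F = \mathrm{id}$ and $F = W'$ yields
\begin{equation*}
  \partial_x \phi(\tau,x) = \int u\,d\nu_{\tau,x}(u), \qquad W'(\partial_x \phi(\tau,x)) = \int W'(u)\,d\nu_{\tau,x}(u) \quad \text{a.e.}
\end{equation*}
Hypothesis \eqref{SConvex} makes $W'$ strictly convex on $[a,b]$, so Jensen's inequality degenerates into an equality only when $\nu_{\tau,x}$ is a Dirac mass: $\nu_{\tau,x} = \delta_{\partial_x \phi(\tau,x)}$ a.e. This forces $\partial_x \phi^N \to \partial_x \phi$ in measure and, combined with the $L^\infty$ bound, in $L^2_{\tau,x}$. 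The limit being uniquely determined, a standard subsequence argument extends the convergence to the whole sequence.

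The main obstacle is the nonlinear consistency step: in Lemma~\ref{LemConsistLin} the affine coefficients $A$ and $B$ made the interplay between $D_N^{\pm}$, $\theta^N$ and the flux transparent, whereas here one has to handle the non-affine flux $W'(\cdot)$ and rely solely on the uniform $L^\infty$ bound of $\partial_x \phi^N$ furnished by \eqref{BorneLinfty} --- together with a discrete summation-by-parts --- to absorb the resulting Taylor remainders.
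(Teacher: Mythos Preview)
Your argument is correct, and the Young-measure/Jensen endgame is exactly the one the paper uses. The route to get there, however, is organised differently.

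The paper does \emph{not} establish the full nonlinear consistency \eqref{Conv11} for all test functions. Instead it tests both the continuous weak formulation and the summed discrete equation against the single weight $g(\tau,x)=(T-\tau)(1-x)$, obtaining two parallel integral identities (Lemmas~\ref{IdSysCo} and~\ref{IdSysD}). Subtracting them directly yields
\[
\int_0^T\!\!\int_{-1}^1 (T-\tau)\Bigl\{ W'\bigl(\partial_x\phi(\tau,x)\bigr) - \int_{\R} W'(\lambda)\,d\nu_{\tau,x}(\lambda)\Bigr\}\,dx\,d\tau = 0,
\]
and since the bracket is nonpositive by Jensen and the weight $(T-\tau)$ is nonnegative, equality in Jensen is forced a.e., giving $\nu_{\tau,x}=\delta_{\partial_x\phi(\tau,x)}$. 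Your route is more systematic: you first prove consistency for \emph{every} $g$, deduce $\partial_x\bigl(\chi-W'(\partial_x\phi)\bigr)=0$, and then kill the defect $h(\tau)$ via the $DC$-compatibility strip. This gives the stronger intermediate statement $\chi=W'(\partial_x\phi)$ pointwise a.e., at the price of the extra identification step. The paper's single-moment approach is more economical --- one cleverly chosen test function replaces your PDE argument --- but both proofs hinge on the same uniform $L^\infty$ bound~\eqref{BorneLinfty}, the same boundary control from $DC$-compatibility, and the same strict-convexity rigidity of $W'$.
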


      To get it, we first prove two parallel integral identities, for the continuous system \eqref{WE} and the discrete one \eqref{Newton}.
      
      \begin{lemma}\label{IdSysCo}
        Assume that $W \in \mathcal{C}^2(\R)$ satisfies \eqref{Convex}. Let $\phi \in W^{1,\infty}_{\tau,x}$ be a weak entropy solution of \eqref{WE} the initial and boundary conditions \eqref{IDC} and \eqref{BCC}. Then, if $T$ is $C$-compatible, we have:
        \begin{align}
          \nonumber
          &\int_0^T \int_{-1}^1 (1-x) \lt( \partial_\tau \phi(\tau,x)-\phi^\tau_0(x) \rt) dx d\tau
           \\
          &=
          \label{IdC1}
          \int_0^T  \int_{-1}^1 (T-\tau) \lt( W'\lt(\partial_x \phi(\tau,x) \rt) - W'\lt(\phi^x_0(-1)\rt)\rt) dx d\tau.  
        \end{align}
      \end{lemma}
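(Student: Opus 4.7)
The plan is to test the weak formulation \eqref{weakf} against $g(\tau,x)=(T-\tau)(1-x)$. Since this $g$ is not compactly supported in $]-\infty,T[\times]-1,1[$, I would first regularize it as $g_\epsilon(\tau,x)=(T-\tau)(1-x)\eta_\epsilon(\tau)\chi_\epsilon(x)$, where $\eta_\epsilon\in\mathcal{C}^\infty_c(]-\infty,T[)$ equals $1$ on $[0,T-2\epsilon]$ and $\chi_\epsilon\in\mathcal{C}^\infty_c(]-1,1[)$ equals $1$ on $[-1+2\epsilon,1-2\epsilon]$, and then let $\epsilon\to 0$. Since $\phi\in W^{1,\infty}_{\tau,x}$, the quantities $\partial_\tau\phi$, $\partial_x\phi$ and $W'(\partial_x\phi)$ are bounded, so dominated convergence handles most of the passage to the limit.

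Expanding $\partial_\tau g_\epsilon$ and $\partial_x g_\epsilon$ by the product rule splits \eqref{weakf} into a bulk part and two cutoff contributions. The bulk piece, where $\eta_\epsilon\chi_\epsilon\equiv 1$, produces $\int_0^T\int_{-1}^1\{-(T-\tau)W'(\partial_x\phi)+(1-x)\partial_\tau\phi\}\,dx\,d\tau$ on the left-hand side of \eqref{weakf}, while the right-hand side tends to $\int_{-1}^1 T(1-x)\phi^\tau_0(x)\,dx=\int_0^T\int_{-1}^1(1-x)\phi^\tau_0(x)\,dx\,d\tau$; rearranging already gives $\int_0^T\int_{-1}^1(1-x)(\partial_\tau\phi-\phi^\tau_0)\,dx\,d\tau=\int_0^T\int_{-1}^1(T-\tau)W'(\partial_x\phi)\,dx\,d\tau$. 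The temporal cutoff contribution carries the factor $(T-\tau)\eta'_\epsilon(\tau)$, which is $O(1)$ on a support of $\tau$-measure $O(\epsilon)$, so that it tends to $0$ independently of any structural hypothesis. The spatial cutoff contribution, supported in $[-1,-1+2\epsilon]\cup[1-2\epsilon,1]$, is the crux of the proof: for $\epsilon<\delta/2$, $C$-compatibility forces $\partial_x\phi\equiv 0$ on this support, so $W'(\partial_x\phi)=W'(0)=0$ by evenness of $W$, and the contribution is identically zero.

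Finally, by $C$-compatibility evaluated at $\tau=0$ one has $\phi^x_0(-1)=\partial_x\phi(0,-1)=0$, whence $W'(\phi^x_0(-1))=W'(0)=0$ and $\int_0^T\int_{-1}^1(T-\tau)W'(\phi^x_0(-1))\,dx\,d\tau=0$; subtracting this zero quantity from the right-hand side recasts the identity in the form \eqref{IdC1}. The main obstacle is therefore the spatial cutoff step: absent the geometric control of the solution near $x=\pm 1$ provided by $C$-compatibility, the flux term $2\int_0^T(T-\tau)W'(\partial_x\phi(\tau,-1))\,d\tau$ at $x=-1$ would contribute a nontrivial boundary quantity that has no analogue in the claimed identity, and the evenness of $W$ is what makes this boundary flux vanish.
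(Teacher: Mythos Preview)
Your approach is the paper's: test \eqref{weakf} against $g(\tau,x)=(T-\tau)(1-x)$, justified by a cutoff argument and $C$-compatibility. The execution is correct, but your handling of the $x=-1$ boundary takes an unnecessary detour through evenness of $W$. The spatial-cutoff contribution near $x=-1$ converges to the boundary flux $2\int_0^T(T-\tau)\,W'\!\big(\partial_x\phi(\tau,-1)\big)\,d\tau$; what $C$-compatibility actually supplies is that $\partial_x\phi(\tau,\cdot)$ is constant in $\tau$ near $x=-1$, equal to its initial value $\phi^x_0(-1)$. Hence this flux equals
\[
2\int_0^T(T-\tau)\,W'\!\big(\phi^x_0(-1)\big)\,d\tau
=\int_0^T\int_{-1}^1(T-\tau)\,W'\!\big(\phi^x_0(-1)\big)\,dx\,d\tau,
\]
which is precisely the term subtracted on the right-hand side of \eqref{IdC1}. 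So the boundary flux is not an obstacle to be annihilated---it \emph{is} the $W'(\phi^x_0(-1))$ contribution in the identity, and neither the evenness of $W$ nor the claim $\phi^x_0(-1)=0$ is needed. The paper's proof keeps this boundary term explicit and simply moves it to the other side; your route reaches the same conclusion but only because, under the literal reading of $C$-compatibility you adopt, both the flux and $W'(\phi^x_0(-1))$ happen to vanish.
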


      A similar identity can be derived for the discrete system:
      \begin{lemma}\label{IdSysD}
        Let $W \in \mathcal{C}^1(\R)$ satisfy \eqref{Convex} and \eqref{SConvex}. Let $X_j(t)$ be the solution of \eqref{Newton} for the initial and boundary conditions \eqref{IDD} and \eqref{BCD}. Suppose that $T$ is $D$-compatible. Assume that:
        \begin{align}
	  \label{IdSysD1}
          &\partial_\tau \phi^N \rightharpoonup \ksi_\infty \text{ in } L^2_{\tau,x},\\
          &\partial_x \phi^N \rightharpoonup \psi_\infty \text{ in } L^2_{\tau,x}.
          \label{IdSysD2}
        \end{align}
        Assume that $\partial_x \phi^N$ is uniformly bounded in $\tau, x, N$. Let $\nu_{\tau,x}$ be the Young measure associated with the convergence of $\partial_x \phi^N$. Then, we have:
        \begin{align}
          \nonumber
          &\int_0^T \int_{-1}^1 (1-x) \lt( \ksi_\infty(\tau,x)-\phi^\tau_0(x) \rt) dx d\tau\\
          &=
          \label{IdD1}
          \int_0^T \int_{-1}^1 (T-\tau) \lt\{ \int_{\R} W'(\lambda) d\nu_{\tau,x}(\lambda) - W'(\phi^x_0(-1)) \rt\}d\tau dx.
        \end{align}

      \end{lemma}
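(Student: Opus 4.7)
The strategy is to reproduce, at the discrete level, the test-function argument underlying Lemma \ref{IdSysCo}. Introduce the discrete version $G_j(t) := (T - t/N)(1 - j/N)$ of the weight $(T-\tau)(1-x)$, multiply \eqref{Newton} by $G_j(t)/N$, sum over $j \in \[-N+1, N-2\]$ and integrate over $t \in [0, NT]$. The factor $1/N$ is arranged so that, after the change of variable $t = N\tau$, every sum that appears is a genuine Riemann sum with the right macroscopic scaling.

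For the left-hand side, one integration by parts in $t$ removes the second derivative of $X_j$. The boundary term at $t = NT$ vanishes since $G_j(NT) = 0$; the contribution at $t = 0$ equals $-\frac{T}{N}\sum_j(1 - j/N)\, V_j(0)$, and because $V_j(0) = \phi_0^\tau(j/N)$, a Riemann-sum argument sends it to $-T \int_{-1}^1 (1-x)\, \phi_0^\tau(x)\, dx$. The remaining volume term becomes $\int_0^T \frac{1}{N}\sum_j (1 - j/N)\, V_j(N\tau)\, d\tau$, which, by the weak convergence assumption \eqref{IdSysD1}, converges to $\int_0^T \int_{-1}^1 (1-x)\, \ksi_\infty(\tau,x)\, dx\, d\tau$. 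Collecting the two yields the left-hand side of \eqref{IdD1}.

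For the right-hand side, an Abel summation in $j$ converts $W'(U_j) - W'(U_{j-1})$ into the bulk term $(G_j - G_{j+1})\, W'(U_j) = \frac{1}{N}(T - t/N)\, W'(U_j)$, together with two boundary contributions. The right-end boundary term has weight $G_{N-2} = \frac{2}{N}(T-t/N) = O(1/N)$ and drops out. The left-end boundary term has weight $G_{-N+1} = (2 - 1/N)(T-t/N)$, and here $D$-compatibility provides $U_{-N}(t) = \phi_0^x(-1) + O(N^{-1})$ uniformly on $[0, NT]$, so its limit is exactly $-\int_0^T \int_{-1}^1 (T-\tau)\, W'(\phi_0^x(-1))\, dx\, d\tau$. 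After rescaling, the bulk term is the Riemann sum $\int_0^T (T-\tau)\, \frac{1}{N} \sum_j W'(U_j(N\tau))\, d\tau$; the Lipschitz continuity of $W'$ on the (by hypothesis bounded) range of $\partial_x \phi^N$ shows that it differs negligibly from $\int_0^T \int_{-1}^1 (T-\tau)\, W'(\partial_x \phi^N)\, dx\, d\tau$, and the Young-measure representation $W'(\partial_x \phi^N) \rightharpoonup \int_{\R} W'(\lambda)\, d\nu_{\tau,x}(\lambda)$ then produces the claimed limit. The main obstacle is precisely this last passage: because $\partial_x \phi^N$ only converges weakly and in fact genuinely oscillates (cf.~Figure \ref{figfig3}), no pointwise limit of $W'(U_j)$ is available, and the Young measure---well-defined because of the uniform $L^\infty$ bound on $\partial_x \phi^N$ in the hypotheses---is indispensable; a secondary point of care is the treatment of the left-boundary contribution via $D$-compatibility, which is what introduces $W'(\phi_0^x(-1))$ into the identity.
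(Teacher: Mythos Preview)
Your proposal is correct and follows essentially the same approach as the paper: both amount to testing \eqref{Newton} against the discrete weight $(T-\tau)(1-x)$ and passing to the limit, with $D$-compatibility controlling the left boundary and the Young measure handling the nonlinear term. The only cosmetic difference is that the paper first telescopes $\sum_{j=-N}^{k}$ to produce $W'(U_k)-W'(U_{-N})$, then integrates against $(NT-t)$, and finally sums over $k$ (which generates the weight $(N-k)/N$), whereas you multiply directly by $G_j$ and use Abel summation; the two computations are algebraically the same.
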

      See Theorem 3.1 p 31 of \cite{muller} for the definition of Young measures.\\
      
      We temporarily admit Lemma \ref{IdSysCo} and \ref{IdSysD} and proceed with the proof of Lemma \ref{LemRenforce}.
      
      \begin{proof}
        Using Lemmata \ref{IdSysCo} and \ref{IdSysD}, we get that, if $\nu_{\tau,x}$ characterizes the weak convergence of $\partial_x \phi^N$ to $\partial_x \phi$:
        \begin{align*}
           \int_0^T \int_{-1}^1 (T-\tau) \lt\{W' \lt(\partial_x \phi (\tau,x) \rt)-\int_{\R} W'(\lambda) d\nu_{\tau,x}(\lambda) \rt\}dx d\tau=0.
        \end{align*}
        But, as $W'$ is strongly convex, using Theorem 1.1.8 p 47 of \cite{niculescu}, we obtain:
        \begin{equation*}
          \nu_{\tau,x}=\delta_{\partial_x \phi(\tau,x)}.
        \end{equation*}
	Then, Corollary 3.2 p 34 of \cite{muller} implies:
        \begin{equation*}
          \partial_x \phi^N \rightarrow \partial_x \phi \text{ in } L^2_{\tau,x}.
        \end{equation*}
      \end{proof}

      Next, we prove Lemma \ref{IdSysCo}:
      \begin{proof}
        As $T$ is $C$-compatible, we can take the following $g(\tau,x)=(T-\tau) (1-x)$ in \eqref{weakf}. Thus, we have:
        \begin{align*}
          &\int_0^T \int_{-1}^1 \lt\{ -(T-\tau) W'\lt(\partial_x \phi(\tau,x)\rt)+(1-x) \partial_\tau \phi(\tau,x) \rt\} dx d\tau\\
          &=  T\int_{-1}^1 (1-x) \phi^\tau_0(x) dx - 2\int_0^T W'\lt( \phi^x_0(-1)\rt) \lt(T-\tau\rt) d\tau.
        \end{align*}
        This is equivalent to \eqref{IdC1}.
      \end{proof}

      We now prove Lemma \ref{IdSysD}:
      \begin{proof}
        We sum \eqref{Newton} and get:
	\begin{align*}
	  \sum_{j=-N}^k \frac{d^2}{dt^2} X_j(t) &= \sum_{j=-N+1}^k W'(U_{j})(t)-W'(U_{j-1})(t)\\
	  &=W'(U_{k})(t)-W'(U_{-N})(t).
	\end{align*}
	Then we multiply the above expression by $(NT-t)$ and integrate with respect to $t$:
	\begin{align*}
	  &\int_0^{NT} (NT-t) \lt( W'(U_{k})-W'(U_{-N}) \rt)(t) dt \\
	  &= \int_0^{NT} \int_0^{t} \sum_{i=-N}^k \frac{d^2}{dt^2} X_j(s) ds dt\\
	  &= \sum_{j=-N}^k \int_0^{NT} \lt\{ \frac{d X_j}{dt}(N\tau)- \frac{d}{dt} X_j(0) \rt\}dt.
	\end{align*}
	If we rescale it and sum over $k \in \[-N,N-1\]$, we obtain:
	\begin{align*}
	  &\frac{1}{N} \sum_{k=-N}^{N-1}\int_0^{T} (T-\tau) \lt(W'\lt(\partial_x  \phi^N \lt(\tau,\frac{k}{N} \rt)\rt)-W'\lt(U_{-N}(N\tau) \rt) \rt) d\tau\\
	  &= \frac{1}{N^2} \sum_{k=-N}^{N-1} \sum_{j=-N}^k \int_0^{T} \lt\{ \ksi^N\lt(\tau,\frac{k}{N} \rt)- \ksi^N\lt(0,\frac{k}{N} \rt) \rt\}d\tau\\
	  &=\frac{1}{N} \sum_{k=-N}^{N-1} \frac{(N-k)}{N} \int_0^{T} \lt\{ \ksi^N\lt(\tau,\frac{k}{N} \rt)- \ksi^N \lt(0,\frac{k}{N} \rt) \rt\}d\tau\\
	  &= \int_0^T \int_{-1}^1 \lt(1-\frac{\lfloor Nx \rfloor}{N}\rt) \lt( \ksi^N\lt(\tau,x \rt)- \ksi^N \lt(0,x \rt) \rt)d\tau. 
	\end{align*}
	Remark first that:
	\begin{equation*}
	  1-\frac{\lfloor Nx \rfloor}{N} \rightarrow 1-x \text{ in } L^\infty_x.
	\end{equation*}
	As $T$ is $D$-compatible, we have the following convergence, uniform for $\tau \in [0,T]$:
	\begin{equation*}
	  W'\lt(U_{-N}(N\tau)\rt) \rightarrow W'\lt( \phi^x_0(-1)\rt).
	\end{equation*}
	From initial conditions \eqref{IDD}, we get that:
	\begin{equation*}
	  \ksi^N\lt(0,.\rt) \rightarrow \phi^\tau_0(.) \text{ in } L^2_x,
	\end{equation*}
	and from the hypotheses \eqref{IdSysD1} and \eqref{IdSysD2} that:
	\begin{align*}
          &\int_0^T \int_{-1}^1 (1-x) \lt( \ksi_\infty(\tau,x)-\phi^\tau_0(x) \rt) dx d\tau\\
          &=
          \int_0^T \int_{-1}^1 (T-\tau) \lt\{ \int_{\R} W'(\lambda) d\nu_{\tau,x}(\lambda) - W'(\phi^x_0(-1)) \rt\}d\tau dx.
        \end{align*}
      \end{proof}
    
    \subsection{From strong convergence of $\partial_x \phi^N$ to strong convergence of $\partial_\tau \phi^N$}
    
      We now prove that the strong convergence of $\partial_x \phi^N$ in $L^2_{\tau,x}$ implies the strong convergence of $\partial_\tau \phi^N$ in $L^2_{\tau,x}$.
      \begin{lemma}\label{LemAuBinLions}
        Under the hypotheses of Theorem \ref{ThmcvNLin}, if $T$ is $DC$-compatible, the following implication is true:
        \begin{align*}
          \lt\{
	    \begin{array}{l l}
	      \phi^{N} \rightarrow \phi &\text{ in } L^2_{\tau,x}\\
	      \partial_x \phi^N \rightarrow \partial_x \phi& \text{ in } L^2_{\tau,x}\\
	      \partial_\tau \phi^N \rightharpoonup \partial_\tau \phi& \text{ in } L^2_{\tau,x}
	    \end{array}
          \rt.
          &&
          \Longrightarrow
          &&
          \lt\{
          \begin{array}{l l}
            \partial_\tau \phi^N \rightarrow \partial_\tau \phi &\text{ in } L^2_{\tau,x}\\
            \ksi^N \rightarrow \partial_\tau \phi &\text{ in } L^2_{\tau,x}
          \end{array}
	  \rt.
	  ,
	\end{align*}
	for $\ksi^N(\tau,x)=V_{k^N(x)}^N(N\tau)$.
      \end{lemma}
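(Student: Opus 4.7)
The plan is to upgrade the weak convergence $\partial_\tau \phi^N \rightharpoonup \partial_\tau \phi$ to strong convergence by proving norm convergence in $L^2_{\tau,x}$, using an integral identity obtained by testing the Newton equation \eqref{Newton} against the discrete solution itself. The point is that the strong convergence of $\phi^N$ and $\partial_x \phi^N$ assumed in the hypothesis is exactly what makes the right-hand side of this identity pass to the limit, while the $L^2$-pairing of strong $\phi^N$ with weak $\xi^N$ handles the cross term.

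For $g \in \mathcal{C}^\infty_c(]0,T[)$, I would multiply \eqref{Newton} by $N^{-2}X_j g(t/N)$, sum over $j\in\[-N+1,N-2\]$, and integrate over $t\in[0,NT]$. After integration by parts in $t$ (the endpoint terms vanish by compact support of $g$), summation by parts in $j$ (the contributions at $j=\pm N$ being controlled by $D$-compatibility together with \eqref{BCD}), and the rescaling $t=N\tau$, the Riemann sums re-identify with integrals and yield the discrete identity
\begin{equation*}
  \int_0^T\! g(\tau)\!\int_{-1}^1 (\xi^N)^2\,dx\,d\tau = \int_0^T\! g(\tau)\!\int_{-1}^1 W'(\partial_x \phi^N)\,\partial_x \phi^N\,dx\,d\tau - \int_0^T\! g'(\tau)\!\int_{-1}^1 \phi^N \xi^N\,dx\,d\tau + r^N,
\end{equation*}
with $r^N \to 0$. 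In the limit $N \to \infty$ the first right-hand term converges by strong $L^2_{\tau,x}$ convergence of $\partial_x \phi^N$, the uniform bound \eqref{BorneLinfty}, and continuity of $W'$; the second by the strong-weak pairing of $\phi^N \to \phi$ with $\xi^N \rightharpoonup \partial_\tau \phi$ in $L^2_{\tau,x}$ (the weak limit of $\xi^N$ coincides with that of $\partial_\tau \phi^N$, since their difference vanishes in $D'_{\tau,x}$ by a summation-by-parts argument against smooth test functions).

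In parallel, I would derive the matching continuous identity by testing the weak formulation \eqref{weakf} of \eqref{WE} against a suitably modified version of $\phi(\tau,x)g(\tau)$---after mollifying $\phi\in W^{1,\infty}_{\tau,x}$ and subtracting a smooth background to accommodate the Dirichlet data, using that $C$-compatibility makes $\phi$ constant near $x=\pm 1$---to obtain
\begin{equation*}
  \int_0^T\! g(\tau)\!\int_{-1}^1 (\partial_\tau \phi)^2\,dx\,d\tau = \int_0^T\! g(\tau)\!\int_{-1}^1 W'(\partial_x \phi)\,\partial_x \phi\,dx\,d\tau - \int_0^T\! g'(\tau)\!\int_{-1}^1 \phi\,\partial_\tau \phi\,dx\,d\tau,
\end{equation*}
the spatial boundary contributions exactly matching their discrete counterparts in the first identity. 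Comparison then gives $\int_0^T g(\tau)\|\xi^N(\tau,\cdot)\|_{L^2_x}^2 d\tau \to \int_0^T g(\tau)\|\partial_\tau \phi(\tau,\cdot)\|_{L^2_x}^2 d\tau$ for every admissible $g$.

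To conclude, the uniform bound $\|\xi^N(\tau,\cdot)\|_{L^2_x}^2 \leq 2\mathcal{E}_D^N(0) \leq C$ from discrete energy conservation together with a sequence $g_\epsilon \in \mathcal{C}^\infty_c(]0,T[)$ approximating $\mathbf{1}_{[0,T]}$ from below give $\limsup_N \|\xi^N\|_{L^2_{\tau,x}}^2 \leq \|\partial_\tau \phi\|_{L^2_{\tau,x}}^2$; weak lower semicontinuity provides the reverse inequality, hence norm convergence and, combined with weak convergence, the strong convergence $\xi^N \to \partial_\tau \phi$ in $L^2_{\tau,x}$. Strong convergence of $\partial_\tau \phi^N$ follows from the elementary pointwise-in-$\tau$ identity
\begin{equation*}
  \|\partial_\tau\phi^N(\tau,\cdot)\|_{L^2_x}^2 = \|\xi^N(\tau,\cdot)\|_{L^2_x}^2 - \tfrac{1}{6N}\sum_j (V_{j+1}-V_j)^2 + O(1/N),
\end{equation*}
whose nonnegative correction yields $\limsup \|\partial_\tau \phi^N\|_{L^2_{\tau,x}}^2 \leq \|\partial_\tau \phi\|_{L^2_{\tau,x}}^2$ and closes the argument. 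The main delicate point throughout is the bookkeeping of boundary corrections, both in time (handled by compact support of $g$) and in space at $\pm 1$ (handled by $D$- and $C$-compatibility); the density argument needed to use $\phi$ itself as a test function in the continuous identity is the technically heaviest step.
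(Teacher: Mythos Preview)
Your approach is correct in outline but takes a genuinely different route from the paper. You argue by norm convergence: test \eqref{Newton} against $N^{-2}X_j g(t/N)$ to obtain an identity for $\int g\,\|\xi^N\|_{L^2_x}^2$, derive the matching continuous identity by testing \eqref{weakf} against (a regularisation of) $g(\tau)\phi$, compare, and conclude via ``weak convergence plus norm convergence implies strong convergence''. The paper instead estimates $\alpha^N:=\partial_\tau\phi^N-\partial_\tau\phi$ directly: it constructs an auxiliary function $\Psi^N$ so that, with $\beta^N:=\Psi^N-W'(\partial_x\phi)$ and $\gamma^N:=\partial_x\phi^N-\partial_x\phi$, one has the exact pointwise relations $\partial_\tau\alpha^N=\partial_x\beta^N$ and $\partial_x\alpha^N=\partial_\tau\gamma^N$; writing $\alpha^N$ once as a $\tau$-primitive of $\partial_x\beta^N$ and once as an $x$-primitive of $\partial_\tau\gamma^N$, it computes $\|\alpha^N\|_{L^2_{\tau,x}}^2$ as a product of these two expressions and integrates by parts to reduce to four terms that all tend to zero because $\beta^N\to 0$ strongly while $\gamma^N$ stays bounded.

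The trade-off is clear. The paper's div--curl style argument is entirely on the discrete side until the very end and never needs to justify using the weak solution $\phi\in W^{1,\infty}$ as a test function in \eqref{weakf}; it also sidesteps the need to match discrete and continuous spatial boundary contributions, since $DC$-compatibility enters only through the boundary values of $\alpha^N$ and $\beta^N$. Your approach is conceptually more transparent (it is the standard energy-identity route), but it places real weight on two steps you flag yourself: the mollification/density argument allowing $g(\tau)\phi$ as a test function, and the verification that the spatial boundary corrections in the discrete and continuous identities genuinely cancel. Both are doable under $DC$-compatibility (subtracting a common smooth background $\phi_{st}$ equal to $\phi_l,\phi_r$ near $\mp 1$ handles the second, and $W'\in\mathcal{C}^0$ on the bounded range of $\partial_x\phi$ handles the first), but they are heavier than anything in the paper's proof.
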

      
      We first prove  that it is enough to have $\partial_\tau \phi^N \rightarrow \partial_\tau \phi$.
      
      \begin{lemma}\label{AubinAide}
        Assume that $\partial_\tau \phi^N \rightarrow \zeta$ in $L^2_{\tau,x}$. Then $\ksi^N \rightarrow \zeta$ in $L^2_{\tau,x}$.
      \end{lemma}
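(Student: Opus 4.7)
The plan is to upgrade strong $L^2$ convergence of the linear interpolant $\partial_\tau \phi^N$ to strong $L^2$ convergence of the piecewise-constant version $\ksi^N$ by combining weak convergence $\ksi^N \rightharpoonup \zeta$ with norm convergence $\|\ksi^N\|_{L^2_{\tau,x}} \to \|\zeta\|_{L^2_{\tau,x}}$, which together force strong convergence in the Hilbert space. The starting point is the pointwise identity, valid on each cell $[k/N,(k+1)/N]$,
\begin{equation*}
\bigl(\partial_\tau \phi^N - \ksi^N\bigr)(\tau,x) = \theta^N(x)\bigl(V_{k^N(x)+1}(N\tau) - V_{k^N(x)}(N\tau)\bigr),
\end{equation*}
which is just the difference between a continuous piecewise linear function and the piecewise constant function having the same nodal values.

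First I would establish the uniform $L^2$ bound $\|\ksi^N\|_{L^2_{\tau,x}} \le C$. This follows from the conservation of the discrete energy $\mathcal{E}_D$ together with the fact that $U_j^N(t) \in [a,b]$ (by hypothesis in Theorem \ref{ThmcvNLin}), so that $W$ is bounded on the range of $U_j^N$, giving $\frac{1}{N}\sum V_j^2 \leq C$ uniformly in $\tau$ and $N$. Next, for the weak convergence $\partial_\tau \phi^N - \ksi^N \rightharpoonup 0$ in $L^2_{\tau,x}$, I would test against a smooth compactly supported $g$, expand $\int_{k/N}^{(k+1)/N} g\,\theta^N\,dx = g(\tau,k/N)/(2N)+O(N^{-2})$ on each cell, and perform a discrete summation by parts. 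Boundary contributions vanish thanks to the Dirichlet condition $V_{\pm N}(t)=0$, and the main term reduces to $-\frac{1}{2N}\int_0^T\!\int_{-1}^1 \partial_x g\cdot \ksi^N\,dx\,d\tau$, which vanishes as $N\to\infty$ using the $L^2$-bound. Combined with $\partial_\tau \phi^N \rightharpoonup \zeta$ (from strong convergence), this yields $\ksi^N \rightharpoonup \zeta$ weakly in $L^2_{\tau,x}$.

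The crux of the argument is the exact algebraic identity
\begin{equation*}
\|\ksi^N - \partial_\tau \phi^N\|_{L^2_{\tau,x}}^2 = 2\bigl(\|\ksi^N\|_{L^2_{\tau,x}}^2 - \|\partial_\tau \phi^N\|_{L^2_{\tau,x}}^2\bigr),
\end{equation*}
obtained by an explicit cell-wise computation, using again $V_{\pm N}=0$ to match the shifted sums $\sum_k V_{k+1}^2 = \sum_k V_k^2$. Equating this with the standard inner product expansion $\|\ksi^N - \partial_\tau \phi^N\|^2 = \|\ksi^N\|^2 + \|\partial_\tau \phi^N\|^2 - 2\langle \ksi^N, \partial_\tau \phi^N\rangle$ rearranges to
\begin{equation*}
\|\ksi^N\|_{L^2_{\tau,x}}^2 = 3\|\partial_\tau \phi^N\|_{L^2_{\tau,x}}^2 - 2\langle \ksi^N, \partial_\tau \phi^N\rangle_{L^2_{\tau,x}}.
\end{equation*}
Strong convergence of $\partial_\tau \phi^N$ gives $\|\partial_\tau \phi^N\|^2 \to \|\zeta\|^2$, while the decomposition $\langle \ksi^N, \partial_\tau \phi^N\rangle = \langle \ksi^N-\zeta, \partial_\tau \phi^N-\zeta\rangle + \langle \ksi^N,\zeta\rangle + \langle \zeta, \partial_\tau\phi^N\rangle - \|\zeta\|^2$ combined with weak convergence of $\ksi^N$, the $L^2$ bound of $\ksi^N$, and strong convergence of $\partial_\tau \phi^N$ yields $\langle \ksi^N, \partial_\tau \phi^N\rangle \to \|\zeta\|^2$. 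Therefore $\|\ksi^N\|^2 \to 3\|\zeta\|^2 - 2\|\zeta\|^2 = \|\zeta\|^2$, and weak plus norm convergence in $L^2_{\tau,x}$ delivers strong convergence $\ksi^N \to \zeta$.

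The main obstacle is the verification of the algebraic identity: the calculation of $\|\partial_\tau \phi^N\|^2$ and $\|\ksi^N - \partial_\tau \phi^N\|^2$ via the integrals $\int_0^1 ((1-\theta)a+\theta b)^2\,d\theta$ and $\int_0^1 \theta^2\,d\theta$ must be done carefully, and the cancellation of the shifted-index sums requires the Dirichlet condition $V_{\pm N}(t)=0$ to hold exactly, a feature proper to the setting of Theorem \ref{ThmcvNLin}.
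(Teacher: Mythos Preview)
Your argument is correct, but it takes a genuinely different route from the paper. The paper proceeds by a single elementary cell-wise inequality: for any function $\zeta$ that is constant on each interval $[k/N,(k+1)/N[$, an explicit quadratic computation gives
\[
\int_{k/N}^{(k+1)/N}\bigl(\partial_\tau\phi^N-\zeta\bigr)^2\,dx \;\ge\; \tfrac{1}{6}\int_{k/N}^{(k+1)/N}\bigl(\ksi^N-\zeta\bigr)^2\,dx,
\]
and then handles a general $\zeta\in L^2_{\tau,x}$ by approximation with such step functions. This yields $\|\ksi^N-\zeta\|_{L^2}\le C\bigl(\|\partial_\tau\phi^N-\zeta\|_{L^2}+o(1)\bigr)$ directly, with no use of boundary conditions, energy bounds, or weak convergence. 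Your approach instead passes through the Hilbert-space mechanism ``weak convergence $+$ convergence of norms $\Rightarrow$ strong convergence'': you first establish $\ksi^N\rightharpoonup\zeta$ via a summation-by-parts argument (which needs the uniform $L^2$ bound on $\ksi^N$ and $V_{\pm N}=0$), and then force $\|\ksi^N\|\to\|\zeta\|$ through the exact identity $\|\ksi^N-\partial_\tau\phi^N\|^2=2\bigl(\|\ksi^N\|^2-\|\partial_\tau\phi^N\|^2\bigr)$, whose derivation again relies on $V_{\pm N}=0$ to match the shifted sums. Both routes are valid; the paper's is shorter and intrinsically self-contained (it would work verbatim in periodic or other settings), whereas yours leans on the Dirichlet structure but has the pleasant feature that the identity itself already gives $\|\ksi^N\|\le 3\|\partial_\tau\phi^N\|$, so the $L^2$ bound you invoke actually follows from the hypothesis without appealing to energy conservation.
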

      
      \begin{proof}
        Recall that $\zeta^N=\partial_\tau \phi^N$. Assume that $\zeta^N \rightarrow \zeta$. 
        We suppose first that:
        \begin{equation}\label{formeC}
          \zeta(\tau,x)= \sum_{k=-N}^{N-1} \zeta^k(\tau) \mathbb{1}_{\lt[\frac{k}{N}, \frac{k+1}{N} \rt[}(x).
        \end{equation} 
	Then, for all $j \in \[-N,N-1\]$, $\tau \in [0,T]$:
	\begin{align*}
	  &\int_{j/N}^{(j+1)/N} \lt( \zeta^N - \zeta\rt)^2(\tau,x)dx \\
	  &=\frac{1}{N} \int_0^1 \Big( \theta(\ksi^N(\tau,j/N)-\zeta(\tau,j/N))\\
	  &+(1-\theta) (\ksi^N(\tau,(j+1)/N)-\zeta(\tau,j/N)) \Big)^2d\theta\\
	  &=\frac{1}{N} \Bigg[\frac{1}{3} \lt(\ksi^N(\tau,j/N)-\zeta(\tau,j/N)\rt)^2+\frac{1}{3}\lt(\ksi^N(\tau,(j+1)/N)-\zeta(\tau,j/N)\rt)^2 \\
	  &+\frac{1}{3}\lt(\ksi^N(\tau,j/N)-\zeta(\tau,j/N)\rt)\lt(\ksi^N(\tau,(j+1)/N)-\zeta(\tau,j/N)\rt) \Bigg]\\
	  &\geq\frac{1}{6N} \lt(\ksi^N(\tau,j/N)-\zeta(\tau,j/N)\rt)^2\\
	  &\geq \frac{1}{6}\int_{j/N}^{(j+1)/N} \lt( \ksi^N(\tau,x) -\zeta(\tau,x)\rt)^2dx.
	\end{align*}
	Therefore, we have:
	\begin{equation*}
	  \int_0^T \int_{-1}^1 \lt( \zeta^N - \zeta\rt)^2(\tau,x) dxd\tau \geq \frac{1}{6} \int_{0}^T \int_{-1}^1 \lt(\ksi^N-\zeta \rt)^2 (\tau,x) dx d\tau.
	\end{equation*}
	If $\zeta$ is  now only $L^2$, we approximate it by $\tilde{\zeta}$ that has the form \eqref{formeC}. Thus, by the triangle inequality, and applying the former result on $\tilde{\zeta}$, we have:
	\begin{align*}
	  \int_0^T \int_{-1}^1 \lt(\ksi^N-\zeta \rt)^2 (\tau,x) dx d\tau \leq&
	  C \int_0^T \int_{-1}^1 \lt\{\lt(\zeta^N-\zeta \rt)^2 (\tau,x) + \lt(\zeta-\tilde{\zeta} \rt)^2 (\tau,x)\rt\} dx d\tau.
	\end{align*}
	Hence:
	\begin{equation*}
	  \lt\| \ksi^N - \zeta \rt\|_{L^2_{\tau,x}} \rightarrow 0.
	\end{equation*}
      \end{proof}

      We can now proceed with the proof of Lemma \ref{LemAuBinLions}.
      \begin{proof}
        By definition:
        \begin{align}\label{Aubin0}
          & \zeta^N(\tau,x)=\lt( \lt(1-\theta^N(x) \rt)\frac{d}{dt} X_{k^N(x) }+\theta^N(x)\frac{d}{dt}X_{k^N(x) +1} \rt)(t).
        \end{align}
	Using \eqref{Newton}, we obtain:
	\begin{align*}
	  \partial_\tau \zeta^N(\tau,x)
	  =& N\lt(1-\theta^N(x)\rt) \lt(W'\lt(U_{k(x)}\rt)-W'\lt(U_{k(x)-1} \rt) \rt)(t)\\
	  &+N\theta^N(x)\lt(W'\lt(U_{k(x)+1}\rt)-W'\lt(U_{k(x)} \rt) \rt)(t).
	\end{align*}
	We define:
	\begin{equation*}
	  \Xi(x):=
	    \lt\{
	  \begin{array}{l c l}
	    0 &\text{ if } & x <0, \\
	    \frac{x^2}{2} & \text{ if } & x \in [0,1],\\
	    \frac{1}{2}-(x-1)^2+(x-1) & \text{ if } & x \in [1,2],\\
	    \frac{1}{2}+\frac{(x-2)^2}{2}-(x-2) & \text{ if } & x \in [2,3],\\
	    0 &\text{ if } & x>3. 
	  \end{array}
	  \rt.
	\end{equation*}
	Let $\Xi^N_j(x):=\Xi(Nx-j)$. We have:
	\begin{equation*}
	  \frac{d}{dx}\Xi^N_j(x):=
	    \lt\{
	  \begin{array}{l c l}
	    0 &\text{ if } & x <j/N, \\
	    N\theta^N(x) & \text{ if } & x \in [j/N,(j+1)/N],\\
	    N\lt(1-2\theta^N(x)\rt) & \text{ if } & x \in [(j+1)/N,(j+2)/N],\\
	    N\lt( \theta^N(x) -1 \rt)& \text{ if } & x \in [(j+2)/N,(j+3)/N],\\
	    0 &\text{ if } & x>(j+3)/N. 
	  \end{array}
	  \rt.
	\end{equation*}
	Newt, we define:
	\begin{equation*}
	  \Psi(\tau,x):=\sum_{j=-N}^{N-1} \Xi^N_{j-1}(x) W'\lt(U_j(N\tau)\rt).
	\end{equation*}
	We have:
	\begin{align*}
	  \partial_x \Psi^N(\tau,x)=\partial_\tau\zeta^N(\tau,x), &&\forall \tau \in [0,T], \forall x \in [-1,1].
	\end{align*}
	Since $\partial_x \phi^N \rightarrow \partial_x \phi$ in $L^2_{\tau,x}$, and since $\partial_x \phi^N$ is bounded in $L^\infty$, we have:
	\begin{equation*}
	  W'\lt(\partial_x \phi^N\rt) \rightarrow W'\lt(\partial_x \phi\rt) \text{ in } L^2_{\tau,x}.
	\end{equation*}
	We claim that it implies:
	\begin{equation}\label{L2Psi}
	  \lt\| \Psi^N-W'\lt(\partial_x \phi \rt) \rt\|_{L^2_{\tau,x}}  \underset{N \rightarrow + \infty}{\rightarrow} 0.
	\end{equation}
	Indeed, as $\partial_x \phi^N$ is bounded in $L^\infty_{\tau,x}$ uniformly in $N$, it suffices to bound as follows:
	\begin{align*}
	  &\int_{-1}^1 \lt| \Psi^N(\tau,x)-W'\lt(\partial_x \phi(\tau,x)\rt) \rt| dx \\
	  &\leq \int_{-1}^1 \lt| \sum_{j=-N}^{N-1} \Xi^{N}_{j-1}(x) \lt(W'\lt(U_j(N\tau) \rt)-W'\lt(\partial_x \phi(\tau,x) \rt) \rt) \rt| dx\\
	  &\leq \int_{-1}^1  \sum_{j=-N}^{N-1} \Xi^{N}_{j-1}(x) \lt|W'\lt(U_j(N\tau) \rt)-W'\lt(\partial_x \phi(\tau,x) \rt) \rt| dx\\
	  & \leq \int_{-1}^1 \sum_{j=-N}^{N-1} \mathbb{1}_{\lt[\frac{j-1}{N},\frac{j+2}{N}\rt]}(x) \lt|W'\lt(U_j(N\tau) \rt)-W'\lt(\partial_x \phi(\tau,x) \rt) \rt| dx\\
	  & \leq  \sum_{j=-1}^1 \int_{-1}^1 \lt|W'\lt(U_{k(x)+j}(N\tau)  \rt)-W'\lt(\partial_x \phi(\tau,x) \rt) \rt| dx \\
	  &\leq \sum_{j=-1}^1 \int_{-1}^1 \lt|W'\lt( \partial_x \phi^N\lt(\tau,x+\frac{j}{N} \rt) \rt)-W'\lt( \partial_x \phi\lt(\tau,x \rt)\rt) \rt| dx\\
	  & \rightarrow 0.
	\end{align*}
	Interpolating with $L^\infty_{\tau,x}$, this gives \eqref{L2Psi}. We define:
	\begin{align*}
	  \alpha^N :=\zeta^N-\zeta,&&
	  \beta^N:=\Psi- W'\lt(\partial_x \phi\rt),&&
	  \gamma^N:=\partial_x \phi^N - \partial_x \phi.
	\end{align*}
	Remark that, by definition, we have:
	\begin{align}
	  \label{Aubin21}&\partial_\tau \alpha^N = \partial_x  \beta^N,\\
	  \label{Aubin22}&\partial_x \alpha^N = \partial_\tau \gamma^N.
	\end{align}
	$D$-compatibility of $T$ implies the following convergences:
	\begin{align}
	  \label{CondBordAubin}
	   &\int_0^T \lt(\alpha^N(\tau,-1)\rt)^2 \rightarrow 0, && \int_0^T\lt(\alpha^N(\tau,1)\rt)^2 d\tau \rightarrow 0, ~~~
	   \int_{-1}^1 \lt(\alpha^N(0,x) \rt)^2 dx \rightarrow 0.
	\end{align} 
	From \eqref{L2Psi}, and by $D$-compatibility, we deduce that:
	\begin{align}
	  &\lim_{N\rightarrow +\infty} \lt\| \beta^N \rt\|_{L^2_{\tau,x}} =0,
	  &&\lim_{N \rightarrow +\infty} \lt\{\lt\| \beta^N(.,1) \rt\|_{L^2_\tau} + \lt\| \beta^N(.,-1) \rt\|_{L^2_\tau} \rt\}  =0. \label{borbeta}
	\end{align}
	The energy estimates for \eqref{Newton} and \eqref{WE} give:
	\begin{align}
	  &\sup_{N} \lt\| \gamma^N \rt\|_{L^2_{\tau,x}} \leq C,
	  && \sup_{N}\lt\{ \lt\|\gamma^N(0,.) \rt\|_{L^2_x} +\lt\|\gamma^N(T,.) \rt\|_{L^2_x}\rt\} \leq C\label{borgamma}.
	 \end{align}
	We now claim that  \eqref{Aubin21}, \eqref{Aubin22}, \eqref{borbeta}, \eqref{borgamma},  \eqref{CondBordAubin} imply:
	\begin{equation}\label{ClaimAubin}
	  \limsup_{N\rightarrow + \infty}\lt\| \alpha^N \rt\|_{L^2_{\tau,x}} = 0,
	\end{equation}
	which gives the desired result, thanks to Lemma \ref{AubinAide}. To prove \eqref{ClaimAubin}, we write:
	\begin{align*}
	  &\alpha^N(\tau,x)\overset{\eqref{Aubin21}}{=}\alpha^N(\tau,-1) + \int_{-1}^x \partial_\tau \gamma^N(\tau,y) dy.\\
	  &\alpha^N(\tau,x)\overset{\eqref{Aubin22}}{=}\alpha^N(0,x) + \int_0^\tau \partial_x \beta^N(\nu,x)d\nu.
	\end{align*}
	Therefore:
	\begin{align*}
	  &\int_0^T \int_{-1}^1 \lt(\alpha^N(\tau,x)\rt)^2 dx d\tau\\
	  &= \int_0^T \int_{-1}^1 
	  \lt(\alpha^N(\tau,-1) + \int_{-1}^x \partial_\tau \gamma^N(\tau,y) dy \rt)
	  \lt( \alpha^N(0,x) + \int_0^\tau \partial_x \beta^N(\nu,x)d\nu\rt)
	  dx d\tau\\  
	  &= \int_0^T \int_{-1}^1 \alpha^N(\tau,-1) \alpha^N(0,x) dxd\tau 
	  +\int_ 0^T \int_{-1}^1 \alpha^N(\tau,-1) \int_0^\tau \partial_x \beta^N(\nu,x) d\nu dx d\tau\\
	  &~~~+ \int_0^T \int_{-1}^1 \alpha^N(0,x) \int_{-1}^x \partial_\tau \gamma^N(\tau,y) dy dx d\tau \\
	  &~~~+ \int_0^T \int_{-1}^1 \int_{-1}^x \partial_\tau \gamma^N(\tau,y)dy \int_0^\tau \partial_x \beta^N(\nu,x) d\nu dx d\tau\\
	  &=:T^N_1+T^N_2+T^N_3+T^N_4.
	\end{align*}
	We deal separately with $T^N_1, T^N_2, T^N_3, T^N_4$.
	By the Cauchy-Schwarz inequality:
	\begin{align}\label{AubinT1}
	  T^N_1 \leq \lt( \int_0^T \alpha^N(\tau,-1) d\tau\rt)^{1/2}\lt( \int_{-1}^1 \alpha^N(0,x) dx\rt)^{1/2} \overset{\eqref{CondBordAubin}}{\rightarrow} 0.
	\end{align}
	Integrating over $x$ in $T_2^N$, we obtain:
	\begin{align}\label{AubinT2}
	  T_2^N =& \int_0^T \alpha^N(\tau,-1) \int_0^\tau \lt(\beta^N(\nu,1)-\beta^N(\nu,-1) \rt) d\nu d\tau \overset{\eqref{borbeta},\eqref{CondBordAubin}}{\rightarrow} 0.
	\end{align}
	Next, integrating over $\tau$ in $T_3^N$, we get:
	\begin{align}\label{AubinT3}
	  T_3^N =& \int_{-1}^1 \alpha^N(0,x)\int_{-1}^x \lt(\gamma^N(T,y)-\gamma^N(0,y) \rt)dy dx \overset{\eqref{borgamma}, \eqref{CondBordAubin}}{\rightarrow} 0.
	\end{align}
	We deal with $T_4^N$ by a double integration by parts:
	\begin{align}
	  \nonumber
	  T_4^N=& \int_0^T \int_{-1}^1 \partial_\tau \gamma^N(\tau,y)dy \int_0^\tau \beta^N(\nu,1) d\nu d\tau
	  \\
	  \nonumber
	  &- \int_0^T \int_{-1}^1 \partial_\tau \gamma^N(\tau,x) \int_0^\tau \beta^N(\nu,x) d\nu dx d\tau
	  \\
	  \nonumber
	  =& \int_{-1}^1 \gamma^N(T,y) dy \int_0^T \beta^N(\tau,1) d\tau
	  \\
	  \nonumber
	  &-\int_0^T\int_{-1}^1 \gamma^N(\tau,x) \beta^N(\tau,1) dx d\tau
	  \\
	  \nonumber
	  &-\int_0^T \int_{-1}^1 \beta^N(\tau,x) \gamma^N(T,x) d\tau dx  \\
	  \nonumber
	  & \int_0^T \int_{-1}^1 \beta^N(\tau,x) \gamma^N(\tau,x) d\tau dx.
	  \\
	  &\overset{\eqref{borbeta},\eqref{borgamma}}{\rightarrow} 0.\label{AubinT4}
	\end{align}
	From \eqref{AubinT1}, \eqref{AubinT2}, \eqref{AubinT3}, \eqref{AubinT4}, we obtain \eqref{ClaimAubin}, which concludes the proof.
      \end{proof}

    \subsection{Proof of Theorem \ref{ThmcvNLin}}
    
      We are now in position to prove Theorem \ref{ThmcvNLin}.
      \begin{proof}
        We first prove the existence of a $D$-compatible $T>0$. $\phi^x_0$ and $\phi^\tau_0$ satisfies \eqref{Riemannbord}, and : \begin{equation*}
	  \lt\|W''(U_j(t))\rt\|_{L^\infty_t(l^\infty_j)} \leq \lt\|W''\rt\|_{L^\infty([a,b])}.
	\end{equation*}
	Moreover, the discrete energy is preserved, which implies that:
	\begin{align*}
	  \mathcal{E}_D(t) \leq& \sum_{j=-N+1}^N \lt\{ \frac{1}{2}\lt( \phi^\tau_0\lt(\frac{j}{N} \rt)\rt)^2 + W\lt(\phi^x_0 \lt(\frac{j}{N} \rt) \rt)  \rt\}\\
	  \leq & N \lt\| \phi_\tau\rt\|_{L^\infty_x}^2 + 2N \lt\|W\rt\|_{L^\infty([a,b])}. 
	\end{align*}
	Therefore, we can apply Theorem \ref{ThLum}. Let $\tilde{X}_j(t)$ satisfy \eqref{Newton} with initial conditions $\tilde{U}_j(t=0)=u_r$, $\tilde{V}_j(t=0)=v_r$ and Dirichlet boundary conditions (remark that this means that $U_j(t)$ and $V_j(t)$ do not depend on time). We compare $\tilde{U}_j(t)=u_r$ and $\tilde{V}_j(t)=v_r$ with $U_j(t)$ and $V_j(t)$,  respectively. Using Theorem \ref{ThLum}, there exists $c>0$ such that, if $T<1/(4c)$ and $T<T_0$:
	\begin{align*}
	  &\sup_{t \in [0,NT]}\max_{j \in [3N/4,N]} N\lt|U_j(t)-u_r\rt| \underset{N \rightarrow +\infty}{\rightarrow} 0,\\
	  &\sup_{t \in [0,NT]}\max_{j \in [3N/4,N]} N\lt|V_j(t)-v_r\rt| \underset{N \rightarrow +\infty}{\rightarrow} 0.
	\end{align*}
	The same argument applies for $j<-3N/4$. Therefore, there exists a $D$-compatible $T>0$.
	
	We now prove that $\phi^N$ does not converge to $\phi$. We argue by contradiction and assume that:
	\begin{equation}\label{abs1}
	  \phi^N \rightarrow \phi \text{ in } D_{\tau,x}'.
	\end{equation}
	Since the discrete energy $\mathcal{E}_D$ is preserved and as $W$ is strongly convex, we get an $H^1$ estimate over $\phi^N$:
	\begin{equation*}
	  \int_{-1}^1 \lt\{ \lt(\partial_x \phi(\tau,x)\rt)^2 + \lt( \partial_\tau \phi(\tau,x)\rt)^2\rt\} \leq C. 
	\end{equation*}
	which directly implies:
	\begin{equation}\label{abs2}
	  \phi^N \rightharpoonup \phi \text{ in } H^1_{\tau,x}.
	\end{equation}
	By Lemma \ref{LemRenforce}, we get that:
	\begin{equation}\label{abs3}
	  \partial_x \phi^N \rightarrow \partial_x \phi \text{ in } L^2_{\tau,x}.
	\end{equation}
	Whence, by Lemme \ref{LemAuBinLions}, we have:
	\begin{equation}\label{abs4}
	  \ksi^N \rightarrow \partial_\tau \phi \text{ in } L^2_{\tau,x}.
	\end{equation}
	$W$ is continuous. Therefore \eqref{abs3}, \eqref{BorneLinfty} and \eqref{abs4} imply:
	\begin{align}
	   \nonumber
	   &\int_0^T \int_{-1}^1 \lt\{ \frac{1}{2} \lt(\partial_\tau \phi^N \rt)^2+ W\lt( \partial_x \phi^N \rt) \rt\}(\tau,x) dx d\tau\\
	   &\rightarrow \int_0^T \int_{-1}^1 \lt\{ \frac{1}{2} \lt(\partial_\tau \phi \rt)^2+ W\lt( \partial_x \phi \rt) \rt\}(\tau,x) dx d\tau.\label{abs5}
	\end{align}
	But the left-hand term of \eqref{abs5} also converges, by discrete energy conservation, to:
	\begin{align*}
	  &\int_0^T \int_{-1}^1 \lt\{ \frac{1}{2} \lt(\partial_\tau \phi^N \rt)^2+ W\lt( \partial_x \phi^N \rt) \rt\}(\tau,x) dx d\tau\\
	  &\rightarrow \int_0^T \int_{-1}^1 \lt\{ \frac{1}{2} \lt(\phi^\tau_0 \rt)^2+ W\lt( \phi^x_0 \rt) \rt\}(\tau,x) dx d\tau=T\mathcal{E}_C(0),
	\end{align*}
	and the right-hand term of \eqref{abs5} is nothing but the energy $\mathcal{E}_C(\tau)$. As $\phi$ is an entropy solution, we have:
	\begin{align*}
	  \int_0^T \mathcal{E}_C(\tau) d\tau =& \int_0^{T_1} \mathcal{E}_C(\tau) d\tau + \int_{T_1}^T \mathcal{E}_C(\tau) d\tau\\
	  \overset{\eqref{PertEc}}{<}& T \mathcal{E}_C(0).
	\end{align*}
	Therefore, we reach a contradiction, and $\phi^N$ cannot converge to $\phi$.
      \end{proof}

  \section{A uniform bound on the distance between particles}\label{SecConjec}
    
    Notice that it is important to assume some regularity on the initial conditions in Conjecture \ref{ConjecDur}. It is indeed possible to build some initial conditions that are small in $l^\infty_j$ such that the associated solutions of \eqref{Newton} are not bounded uniformly in $N$ at a fixed macroscopic time $\tau>0$.
    The following proof of Proposition \ref{ConjSubtl} uses the reversibility of equation \eqref{Newton} and also a linearization of eigenvalues $\lambda_k$.

     We first derive explicit formulae for solution of linear periodic system \eqref{Newton}.\\
      Let $I \in \mathcal{M}_{2N}(\R)$ be the identity matrix, and $J \in \mathcal{M}_{2N}(\R)$ the circular permutation:
      \begin{equation*}
        J_{jk}:= \delta^{j}_{k+1}.
      \end{equation*}
      When the potential is quadratic and satisfies \eqref{PotQuad}, system \eqref{Newton} with periodic boundary conditions is equivalent to:
      \begin{equation*}
        \frac{d^2X}{dt^2} + \lt( 2 I - J - J^{-1} \rt)X=0.
      \end{equation*}
      We diagonalize this system using its eigenvectors:
      \begin{equation*}
	\Omega_j=
	\frac{1}{\sqrt{2N}}\lt(
	  \begin{array}{l}
		  1\\
		  \omega_j\\
		  ..\\
		  \omega_j^{2N-1}
	  \end{array}
	\rt),
      \end{equation*}
      where $\omega_j=\exp\lt({\frac{ij\pi}{N}}\rt)$. The associated eigenvalues are:
      \begin{equation}\label{Lambda}
	\lambda_j = 2\lt(1-\cos\lt(\frac{j\pi}{N}\rt)\rt)=4 \sin^2\lt(\frac{j\pi}{2N} \rt).
      \end{equation}
      Thus, a solution of \eqref{Newton} with periodic boundary conditions satisfies:
      \begin{align}
	X_j(t)=
	&\sum_{k=1}^{2N-1} \cos\lt(t\sqrt{\lambda_k}\rt) \lt(\Omega_k | {X}(0) \rt) \lt(\Omega_k\rt)_j+\sum_{k=1}^{2N-1} \frac{1}{\sqrt{\lambda_k}}\sin\lt(t\sqrt{\lambda_k}\rt) \lt(\Omega_k | {V}(0) \rt) \lt(\Omega_k\rt)_j \nonumber\\
	&+ \frac{1}{\sqrt{2N}} \lt[\lt(\Omega_0|{X}(0)\rt) + \lt(\Omega_0|{V}(0)\rt) t \rt] \label{ExprElle},
      \end{align}
      where, for two vectors $Y$, $Z \in \mathbb{C}^{2N}$, $(|)$ denotes the hermitian product:
      \begin{equation*}
        \lt(Y|Z\rt):=\sum_{k=-N}^{N-1} Y_k Z_k^*.
      \end{equation*}
      One easily derives such formulae for $V_j$, $U_j$ and $Z_j$ by linearity.

      We can now prove Proposition \ref{ConjSubtl}.
      \begin{proof}[Proof of Proposition \ref{ConjSubtl}]
	Using the reversibility of \eqref{Newton}, it is enough to prove that, if $X^N_j$ is a solution of \eqref{Newton} with periodic boundary condition such that
	$U^N_j(t=0)=\delta_j^0$ and $V^N_j(t=0)=0$, then:
	\begin{align}
	&\lt\|U^N_j(N\tau)\rt\|_{l_j^\infty} \rightarrow 0,\label{Amontrer1}\\
	&\lt\|V^N_j(N\tau)\rt\|_{l_j^\infty} \rightarrow 0.\label{Amontrer2}
	\end{align}
	Indeed, let $\tilde{X}_i$ be the solution of \eqref{Newton} with periodic boundary conditions and the following initial conditions:
	\begin{align*}
	&\tilde{U}^N_j(t=0):=K_N U^N_j(N\tau),\\
	&\tilde{V}^N_j(t=0):=-K_N V^N_j(N\tau).
	\end{align*}
	By linearity and reversibility of \eqref{Newton}, we get:
	\begin{align*}
	&\tilde{U}^N_j(N\tau)=K_N U^N_j(0),\\
	&\tilde{V}^N_j(N\tau)=-K_N V^N_j(0).
	\end{align*}
	Setting $K_N=\lt\{\max\lt(\lt\|U^N_j(N\tau)\rt\|_{l_j^\infty},\lt\|V^N_j(N\tau)\rt\|_{l_j^\infty} \rt)\rt\}^{-1}$ gives the desired result.
	
	We only show \eqref{Amontrer1}, as the proof of \eqref{Amontrer2} is similar. Thanks to \eqref{ExprElle}:
	\begin{align*}
	U_k^N(N\tau)=&\frac{1}{2N}\sum_{j=-N}^{N-1} \cos\lt(2N\tau\sin\lt(\frac{j\pi}{2N}\rt)\rt) e^{ijk\pi/N}.
	\end{align*}
	To simplify the proof, we suppose $N=nm$ (it can be generalized with a few technicalities). Thus:
	\begin{align*}
	U_k^N(N\tau)=&\frac{1}{2N}\sum_{l=-n}^{n-1} \sum_{j=0}^{m-1} \cos\lt(2N\tau\sin\lt(\frac{(lm+j)\pi}{2N}\rt)\rt) e^{i(lm+j)k\pi/N}.
	\end{align*}
	Let us bound terms of the type:
	\begin{align*}
	Q_{\pm}^{l,k}:=\lt|\frac{1}{N}\sum_{j=0}^{m-1} \exp\lt(\pm i2N\tau\sin\lt(\frac{(lm+j)\pi}{2N}\rt)\rt) e^{i(lm+j)k\pi/N}\rt|.
	\end{align*}
	We expand:
	\begin{equation*}
	\sin\lt(\frac{(lm+j)\pi}{2N}\rt)=\sin\lt(\frac{lm\pi}{2N}\rt)+\frac{j\pi}{2N}\cos\lt(\frac{lm\pi}{2N} \rt)+\frac{m^2}{N^2}r(N,m,l,j),
	\end{equation*}
	where $r(N,m,l,j)<C$, independently of $N$, $m$, $l$, $j$. As a consequence:
	\begin{align*}
	Q_{\pm}^{l,k} 
	\leq& \lt|\frac{1}{N}\sum_{j=0}^{m-1} \exp\lt(\pm 2i \lt( \frac{\tau j\pi}{2}\cos\lt(\frac{lm\pi}{2N} \rt)+\frac{m^2\tau}{N}r(N,m,l,j)\rt)\rt) e^{ijk\pi/N}\rt|\\
	\leq& \frac{1}{N} \sum_{j=0}^{m-1} \frac{Cm^2}{N}
	+\lt|\frac{1}{N}\sum_{j=0}^{m-1} \exp\lt(\pm   i\tau j\pi\cos\lt(\frac{lm\pi}{2N}\rt)+ \frac{ijk\pi}{N}\rt)\rt|\\
	\leq& C \frac{m^3}{N^2}+\frac{1}{N} \frac{2}{\lt|1-\exp(i\gamma_{\pm}(k,l,m,N,\tau))\rt|},
	\end{align*}
	where:
	\begin{equation*}
	\gamma_\pm(k,l,m,N,\tau)=\frac{k\pi}{N}\pm \tau \pi \cos\lt(\frac{lm\pi}{2N} \rt).
	\end{equation*}
	Without loss of generality, we focus only on $Q_+^{l,k}$. There exist at most two solutions $s_1$ and $s_2 \in [-\pi/2,\pi/2]$ to the equation:
	\begin{equation*}
	  \tau\pi\cos(s)+\frac{k\pi}{N}=0.
	\end{equation*}
	Let $1>\delta>0$. If $\frac{lm\pi}{2N} \notin ]s_1-\delta,s_1+\delta[ \cup ]s_2-\delta,s_2+\delta[ $, then:
	\begin{equation}
	\lt|1-\exp(i\gamma_{+}(k,l,m,N,\tau))\rt|>C\delta^2.
	\end{equation}
	for some universal constant $C$. Whence, if $\frac{lm\pi}{2N} \notin ]s_1-\delta,s_1+\delta[ \cup ]s_2-\delta,s_2+\delta[ $, we have:
	\begin{align*}
	  Q_+^{l,k}\leq& C \frac{m^3}{N^2}+\frac{1}{N} \frac{C}{\delta^2}.
	\end{align*}
	Moreover, it is immediate from the definition of $Q^{l,k}_{\pm}$ that for all $l, k$:
	\begin{equation*}
	Q_{\pm}^{l,k} \leq \frac{m}{N}.
	\end{equation*}
	We denote:
	\begin{equation*}
	E:= \[-n,n-1\] \cap \lt( \lt] \frac{2N}{m\pi}(s_1-\delta), \frac{2N}{m\pi}(s_1+\delta)\rt[ \cup \lt]\frac{2N}{m\pi}(s_2-\delta),\frac{2N}{m\pi}(s_2+\delta) \rt[\rt),
	\end{equation*}
	and bound the sum:
	\begin{align*}
	\sum_{l=-n}^{n-1} Q_+^{l,k} 
	=& \sum_{l\in E} Q^{l,k}_+ + \sum_{l \notin E} Q^{l,k}_+\\
	\leq& \sum_{l \in E}\frac{m}{N} + C\sum_{l \notin E} \frac{m^3}{N^2} + \frac{1}{N\delta^2}\\
	\leq& \frac{8N}{m\pi} \delta \frac{m}{N} +2Cn \lt(\frac{m^3}{N^2} + \frac{1}{N\delta^2}\rt)\\
	\leq& C \lt( \delta + \frac{m^2}{N} + \frac{1}{m \delta^2} \rt).
	\end{align*}
	Let $\delta=\lt( m\rt)^{-1/3}$, $m=N^{3/7}$. We get:
	\begin{align*}
	\sum_{l=-n}^{n-1} Q_+^{l,k} \leq \frac{C}{N^{1/7}}.
	\end{align*}
	Doing the same manipulations on $Q_-^{l,k}$, we get that, for all $k \in \[-N,N-1\]$:
	\begin{align*}
	\lt|U_k^N(N\tau)\rt| &\leq \sum_{l=-n}^{n-1} Q_+^{l,k} + Q_-^{l,k}\\
	&\leq \frac{C}{N^{1/7}},
	\end{align*}
	whence \eqref{Amontrer1}.
      \end{proof}
      
      \begin{remark}
	One can remove the technical assumption $N=nm$ with $m, n \in \mathbb{N}$, by fixing $\mu:=\lt \lfloor N^{1/7} \rt\rfloor$, and then $m:=\mu^3$, $n:=\mu^4$. Remarking that $N-\mu^7 <C  N^{6/7}$, we can apply the same proof as above and derive the same estimates.
      \end{remark}
    
\section{Non-existence of discrete shock waves}\label{SecSoliton}
    
    We prove in this section that there do not exist discrete shock waves. We use some ideas from \cite{Benzoni}, where an existence result is proven for upwind schemes.

    \subsection{Quadratic potential}
    
      In this section, we prove Proposition \ref{DiscreteSolitonsLin}.
      We first show a lemma which is valid for a wide class of potentials $W$:
      \begin{lemma}\label{StatWave}
        Suppose $W \in \mathcal{C}^1(\R)$, such that $W'(u_l) \neq W'(u_r)$. Then there exists no discrete shock wave to the equation \eqref{Newton} with zero speed. That is, there does not exist $X_j(t)$ satisfying Definition \ref{DiscShock}, with associated $c=0$.
      \end{lemma}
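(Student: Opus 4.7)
My plan is to argue by contradiction, exploiting the simple structure that $c=0$ imposes on the profile $\phi$.

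The first step is to specialise the defining equation \eqref{Soliton} to $c=0$. Since the left-hand side then vanishes identically, the discrete shock wave condition reduces to
\begin{equation*}
  W'\bigl(\phi(x+1)-\phi(x)\bigr) = W'\bigl(\phi(x)-\phi(x-1)\bigr)\qquad \forall x\in\R.
\end{equation*}
Iterating (or simply reading it off as the statement that the map $x\mapsto W'(\phi(x+1)-\phi(x))$ is $1$-periodic and, upon evaluation at integer translates, constant along the shifts $x\mapsto x+1$) shows that the function $F(x):=W'(\phi(x+1)-\phi(x))$ is constant on each coset $x+\Z$. Actually for the contradiction I only need the integer-indexed statement: $W'(\phi(j+1)-\phi(j))$ does not depend on $j\in\Z$.

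The second step is to identify this constant by taking the two limits $j\to\pm\infty$. Writing $\phi(j+1)-\phi(j)=\int_j^{j+1}\phi'(s)\,ds$ and using the assumptions $\phi'(s)\to u_l$ as $s\to-\infty$ and $\phi'(s)\to u_r$ as $s\to+\infty$ gives
\begin{equation*}
  \lim_{j\to-\infty}\bigl(\phi(j+1)-\phi(j)\bigr)=u_l,\qquad \lim_{j\to+\infty}\bigl(\phi(j+1)-\phi(j)\bigr)=u_r.
\end{equation*}
By continuity of $W'$ (which holds since $W\in\mathcal{C}^1$), the constant value of $F$ must equal both $W'(u_l)$ and $W'(u_r)$. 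This contradicts the hypothesis $W'(u_l)\neq W'(u_r)$.

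There is no real obstacle here; the only slightly delicate point is checking that the convergence of $\phi'$ at infinity transfers to the discrete increments $\phi(j+1)-\phi(j)$, which is the elementary integral identity above. The argument uses essentially no structural assumption on $W$ beyond $\mathcal{C}^1$ regularity, which is why the conclusion is phrased for a wide class of potentials and will be directly reusable when extending to the nonlinear cases \eqref{Convex}--\eqref{SConvex}.
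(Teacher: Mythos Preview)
Your argument is correct. Both your proof and the paper's rest on the same idea: pass to the limits $\pm\infty$ in the profile equation to force $W'(u_l)=W'(u_r)$. The only difference is one of generality. You set $c=0$ at the outset, observe that $x\mapsto W'(\phi(x+1)-\phi(x))$ is $1$-periodic, and read off the contradiction from the limits. The paper instead integrates \eqref{Soliton} over $[x,y]$ for arbitrary $c$, telescopes the right-hand side, and lets $x\to-\infty$, $y\to+\infty$ to obtain the full Rankine--Hugoniot relation
\[
  c^2(u_r-u_l)=W'(u_r)-W'(u_l),
\]
from which the $c=0$ case follows immediately. The payoff of the paper's route is that this identity, labelled \eqref{RH2}, is reused verbatim in the proof of Proposition~\ref{DiscreteSolitonsNLin}; your more direct argument does not produce it, so you would need to rederive it there.
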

      
      \begin{proof}
        Integrating \eqref{Soliton}, we get:
        \begin{align*}
          c^2 \int_x^y \phi''(s) ds = \int_{y-1}^y W'(\phi(s+1)-\phi(s)) ds - \int_{x-1}^x W'(\phi(s+1)-\phi(s)) ds.
        \end{align*}
        If $x \rightarrow -\infty$ and $y \rightarrow +\infty$, we obtain:
        \begin{equation}
          \label{RH2}
          c^2 (u_r-u_l)=W'(u_r)-W'(u_l),
        \end{equation} 
        which is the Rankine-Hugoniot equation \eqref{RH}. It cannot hold if $c=0$.
      \end{proof}

      We can now prove Proposition \ref{DiscreteSolitonsLin}: 
      \begin{proof}
	If $c=0$, Lemma \ref{StatWave} gives the desired result. Suppose $c\neq 0$.
        Using Fourier transform on \eqref{Soliton} implies:
        \begin{align*}
          c^2 \ksi^2 \mathcal{F}(\phi)(\ksi)=& \lt(\exp(i\ksi)-2+\exp(-i\ksi) \rt)\mathcal{F}(\phi)(\ksi).
        \end{align*}
        The equation:
        \begin{equation}
	  \label{MonSupport}
          c^2 \ksi^2 = 2\lt(1-\cos(\ksi)\rt),
        \end{equation}
        has a finite number of solutions $\ksi_j$, $j\in J$. Therefore, there exist $K_j \in \mathbb{N}$, $a_{jk} \in \R$ such that:
        \begin{equation*}
          \mathcal{F}(\phi)= \sum_{j=1}^J \sum_{k=0}^{K_j} a_{jk}\delta_{\ksi_j}^{(k)}.
        \end{equation*}
        Thus:
        \begin{equation*}
          \phi(x)=\sum_{j=1}^J \sum_{k=0}^{K_j} a_{jk}(ix)^{k} \exp(ix\ksi_j).
        \end{equation*}
        Since $\phi'$ has a limit for $x \rightarrow +\infty$ then $a_{jk}=0$ if $j \neq 0$ or $k>1$. Thus, there exists no discrete shock wave with $c \neq 0$.
      \end{proof}

   \subsection{Convex non-linear potential}
    
      We now prove Proposition \ref{DiscreteSolitonsNLin}.       
      
      \begin{proof}
        Suppose $u_l \neq u_r$. Test now \eqref{Soliton} with $\phi'$:
        \begin{align*}
          \frac{c^2}{2} \lt( u_r^2 -u_l^2 \rt)
          =& \lim_{R \rightarrow + \infty} \int_{-R}^R \lt\{W'(\phi(x+1)-\phi(x))-W'(\phi(x)-\phi(x-1)) \rt\} \phi'(x) dx\\
          =&\lim_{R\rightarrow + \infty} \Bigg\{ \int_{-R}^R W'(\phi(x+1)-\phi(x)) \lt(\phi'(x+1)-\phi'(x) \rt) dx\\
          &+\int_{R-1}^R \phi'(x+1) W'(\phi(x+1)-\phi(x)) dx - \int_{-R-1}^{-R} \phi'(x+1)W'(\phi(x+1)\\
          &-\phi(x)) dx\Bigg\}\\
          =& W(u_r)-W(u_l)+W'(u_r)u_r-W'(u_l)u_l.
        \end{align*}
        Using \eqref{RH2}, we get:
        \begin{equation}
          \label{Soliton2}
          \frac{1}{2} \lt(W'(u_r)-W'(u_l) \rt)= \frac{W(u_r)-W(u_l)}{u_r-u_l}.
        \end{equation} 
        Yet:
        \begin{align*}
          &\frac{1}{2} \lt(W'(u_r)-W'(u_l) \rt)- \frac{W(u_r)-W(u_l)}{u_r-u_l}
          \\&=
          \frac{1}{u_r-u_l} \int_{u_l}^{u_r} \lt\{ \frac{u_r-s}{u_r-u_l}W'(u_l)+\frac{s-u_l}{u_r-u_l} -W'(s) \rt\} ds,
        \end{align*}
        and as $W'$ is strictly convex:
        \begin{equation*}
          \frac{u_r-s}{u_r-u_l}W'(u_l)+\frac{s-u_l}{u_r-u_l} -W'(s)>0, \forall s \in ]u_l,u_r[.
        \end{equation*}
        This is contradictory. Therefore, as $u_l\neq u_r$, there does not exist any discrete shock wave of \eqref{Newton}.
      \end{proof}

  \section*{Acknowledgement}
  
    We wish to thank Claude Le Bris and Frédéric Legoll for their help and Gabriel Stoltz and Frédéric Lagoutière, for fruitful discussions.


\begin{thebibliography}{10}

\bibitem{adams1975sobolev}
R.~A. Adams.
\newblock Sobolev spaces, 1975.

\bibitem{allaire}
G.~Allaire.
\newblock {\em Analyse num{\'e}rique et optimisation: {U}ne introduction {\`a}
  la mod{\'e}lisation math{\'e}matique et {\`a} la simulation num{\'e}rique [in
  French]}.
\newblock Editions Ecole Polytechnique, 2005.

\bibitem{arnold}
V.~Arnold.
\newblock {\em Mathematical methods of classical mechanics}, volume~60.
\newblock Springer Science \& Business Media, 1989.

\bibitem{Benzoni}
S.~Benzoni-Gavage.
\newblock Semi-discrete shock profiles for hyperbolic systems of conservation
  laws.
\newblock {\em Physica D: Nonlinear Phenomena}, 115(1):109--123, 1998.

\bibitem{berezhnyy2006continuum}
M.~Berezhnyy and L.~Berlyand.
\newblock Continuum limit for three-dimensional mass-spring networks and
  discrete {K}orn's inequality.
\newblock {\em Journal of the Mechanics and Physics of Solids}, 54(3):635--669,
  2006.

\bibitem{BLL}
X.~Blanc, C.~Le~Bris, and P.-L. Lions.
\newblock From the {N}ewton equation to the wave equation in some simple cases.
\newblock {\em NHM}, 7(1):1--41, 2012.

\bibitem{brenier2}
Y.~Brenier.
\newblock Une application de la sym{\'e}trisation de {S}teiner aux
  {\'e}quations hyperboliques: la m{\'e}thode de transport et {\'e}croulement
  [in french].
\newblock {\em CR Acad. Sci. Paris S{\'e}r. I Math}, 292(11):563--566, 1981.

\bibitem{brenier_rearrange}
Y.~Brenier.
\newblock Approximation of a simple navier-stokes model by monotonic
  rearrangement.
\newblock {\em Discrete and Continuous Dynamical Systems}, 34(4):1285--1300,
  2014.

\bibitem{brenner2003poincare}
S.~C. Brenner.
\newblock Poincar{\'e}--{F}riedrichs inequalities for piecewise ${H}^1$
  functions.
\newblock {\em SIAM Journal on Numerical Analysis}, 41(1):306--324, 2003.

\bibitem{Bressan}
A.~Bressan and M.~Lewicka.
\newblock A uniqueness condition for hyperbolic systems of conservation laws.
\newblock {\em Discrete and Continuous Dynamical Systems}, 6(3):673--682, 2000.

\bibitem{Brillouin}
L.~Brillouin.
\newblock {\em Wave propagation and group velocity}, volume~8.
\newblock Academic Press, 2013.

\bibitem{Dafermos}
C.~M. Dafermos.
\newblock Estimates for conservation laws with little viscosity.
\newblock {\em SIAM journal on mathematical analysis}, 18(2):409--421, 1987.

\bibitem{dafermos1986energy}
C.~M Dafermos and W.~J. Hrusa.
\newblock {\em Energy methods for quasilinear hyperbolic initial-boundary value
  problems. Applications to elastodynamics}.
\newblock Springer, 1986.

\bibitem{McLaughlin}
P.~Deift and K.~McLaughlin.
\newblock {\em A continuum limit of the Toda lattice}.
\newblock Number 624. American Mathematical Soc., 1998.

\bibitem{Evans}
L.~Evans.
\newblock {\em Partial Differential Equations}.
\newblock American Mathematical Society, 1998.

\bibitem{LaxGoodman}
J.~Goodman and P.~Lax.
\newblock On dispersive difference schemes. {I}.
\newblock {\em Communications on pure and applied mathematics}, 41(5):591--613,
  1988.

\bibitem{Hairer}
E.~Hairer, C.~Lubich, and G.~Wanner.
\newblock {\em Geometric Numerical Integration Structure-Preserving Algorithms
  for Ordinary Differential Equations}.
\newblock Springer, 2005.

\bibitem{Flaschka}
B.~L. Holian, H.~Flaschka, and D.~W. McLaughlin.
\newblock Shock waves in the {T}oda lattice: Analysis.
\newblock {\em Physical Review A}, 24(5):2595, 1981.

\bibitem{kato1975cauchy}
T.~Kato.
\newblock The {C}auchy problem for quasi-linear symmetric hyperbolic systems.
\newblock {\em Archive for Rational Mechanics and Analysis}, 58(3):181--205,
  1975.

\bibitem{LaxLevermore1}
P.~Lax and C.~Levermore.
\newblock The small dispersion limit of the {K}orteweg-de {V}ries equation.
  {I}.
\newblock {\em Selected Papers Volume {I}}, pages 463--500, 2005.

\bibitem{lebris}
C.~Le~Bris.
\newblock {\em Syst{\`e}mes multi-{\'e}chelles: mod{\'e}lisation et simulation
  [in French]}, volume~47.
\newblock Springer Science \& Business Media, 2006.

\bibitem{Mielke}
A.~Mielke and L.~Truskinovsky.
\newblock From discrete visco-elasticity to continuum rate-independent
  plasticity: rigorous results.
\newblock {\em Archive for Rational Mechanics and Analysis}, 203(2):577--619,
  2012.

\bibitem{muller}
S.~M{\"u}ller.
\newblock {\em Variational models for microstructure and phase transitions}.
\newblock Springer, 1999.

\bibitem{niculescu}
C.~Niculescu and L.-E. Persson.
\newblock {\em Convex functions and their applications: a contemporary
  approach}.
\newblock Springer Science \& Business Media, 2006.

\bibitem{pachpatte1988discrete}
B.~G. Pachpatte.
\newblock On discrete inequalities of the {P}oincar{\'e} type.
\newblock {\em Periodica Mathematica Hungarica}, 19(3):227--233, 1988.

\bibitem{Serre}
D.~Serre.
\newblock {\em Systems of Conservation Laws 1: Hyperbolicity, entropies, shock
  waves}.
\newblock Cambridge University Press, 1999.

\bibitem{serrediscrete}
D.~Serre.
\newblock Discrete shock profiles: {E}xistence and stability.
\newblock In {\em Hyperbolic systems of balance laws}, pages 79--158. Springer,
  2007.

\bibitem{sychev}
M.~A. Sychev.
\newblock A new approach to young measure theory, relaxation and convergence in
  energy.
\newblock {\em Annales de l'IHP Analyse non lin{\'e}aire}, 16(6):773--812,
  1999.

\bibitem{tadmornumerical}
E.~Tadmor.
\newblock The numerical viscosity of entropy stable schemes for systems of
  conservation laws. {I}.
\newblock {\em Mathematics of Computation}, 49(179):91--103, 1987.

\bibitem{Toda}
M.~Toda.
\newblock {\em Theory of nonlinear lattices}, volume~20.
\newblock Springer Science \& Business Media, 2012.

\bibitem{deiftvernakides}
S.~Venakides, P.~Deift, and R.~Oba.
\newblock The {T}oda shock problem.
\newblock {\em Communications on pure and applied mathematics},
  44(8-9):1171--1242, 1991.

\bibitem{wei2007cauchy}
E.~Wei-{N}an and {P}.-{B}. Ming.
\newblock {C}auchy-{B}orn rule and the stability of crystalline solids: dynamic
  problems.
\newblock {\em Acta Mathematicae Applicatae Sinica, English Series},
  23(4):529--550, 2007.

\bibitem{zygmund1955trigonometrical}
A.~Zygmund.
\newblock {\em Trigonometrical series}.
\newblock Dover, 1955.

\end{thebibliography}
\end{document}